\theoremstyle{plain}
\newtheorem{theorem}{Theorem}[section]
\newtheorem{lemma}[theorem]{Lemma}
\newtheorem{corollary}[theorem]{Corollary}
\newtheorem{remark}[theorem]{Remark}
\theoremstyle{definition}
\newtheorem{problem}[theorem]{Problem}
\newcommand{\refT}[1]{Theorem~\ref{#1}}
\newcommand{\refL}[1]{Lemma~\ref{#1}}
\newcommand{\refR}[1]{Remark~\ref{#1}}
\newcommand{\refS}[1]{Section~\ref{#1}}
\newcommand{\cC}{\mathcal{C}}
\newcommand{\cD}{\mathcal{D}}
\newcommand{\cU}{\mathcal{U}}
\newcommand{\cN}{\mathcal{N}}
\newcommand{\acc}{\mathcal{C}^+}
\newcommand{\cc}{\mathcal{C}}
\newcommand{\portion}{\nu}
\DeclareMathOperator{\Bin}{Bin}
\newcommand{\Gnp}{G_{n,p}}
\newcommand{\indic}[1]{\mathbbm{1}_{\{{#1}\}}}
\newcommand\bigpar[1]{\bigl(#1\bigr)}
\newcommand\Bigpar[1]{\Bigl(#1\Bigr)}
\newcommand\biggpar[1]{\biggl(#1\biggr)}
\newcommand\Biggpar[1]{\Biggl(#1\Biggr)}
\newcommand\lrpar[1]{\left(#1\right)}
\newcommand\Bigsqpar[1]{\Bigl[#1\Bigr]}
\newcommand\biggsqpar[1]{\biggl[#1\biggr]}
\newcommand\Biggsqpar[1]{\Biggl[#1\Biggr]}
\newcommand\bigcpar[1]{\bigl\{#1\bigr\}}
\newcommand\Bigcpar[1]{\Bigl\{#1\Bigr\}}
\newcommand\biggcpar[1]{\biggl\{#1\biggr\}}
\newcommand\Biggcpar[1]{\Biggl\{#1\Biggr\}}
\newcommand\ceil[1]{\lceil#1\rceil}
\newcommand\biggceil[1]{\biggl\lceil#1\biggr\rceil}
\newcommand\floor[1]{\lfloor#1\rfloor}
\newcommand\Bigfloor[1]{\Bigl\lfloor#1\Bigr\rfloor}
\newcommand\biggfloor[1]{\biggl\lfloor#1\biggr\rfloor}
\newcommand{\gr}{\zeta}
\renewcommand{\Pr}{\mathbb{P}}
\newcommand{\E}{\mathbb{E}}
\newcommand{\eps}{\varepsilon}
\newenvironment{romenumerate}[1][-5pt]{
\addtolength{\leftmargini}{#1}\begin{enumerate}
 }{\end{enumerate}}
\let\OLDthebibliography\thebibliography
\renewcommand\thebibliography[1]{
  \OLDthebibliography{#1}
  \setlength{\parskip}{0pt}
  \setlength{\itemsep}{0pt plus 0.3ex}
}
\title{The clique chromatic number of sparse random graphs}
\author{Manuel Fernandez V%
\thanks{School of Mathematics, Georgia Institute of Technology, Atlanta GA~30332, USA. E-mail: {\tt mfernandez39@gatech.edu}.}
\ and Lutz Warnke%
	\thanks{Department of Mathematics, University of California, San Diego, La Jolla CA~92093, USA. 
E-mail: {\tt lwarnke@ucsd.edu}. 
Supported by NSF~CAREER grant~DMS-2225631, and a Sloan Research Fellowship.}
}
\date{March~11, 2024; revised March~22, 2025}
\begin{document}

\maketitle 

\begin{abstract}
The clique chromatic number of a graph is the smallest number of colors in a vertex coloring so that no maximal clique is monochromatic. 
In this paper, we determine the order of magnitude of the clique chromatic number of the random graph~$G_{n,p}$ for most edge-probabilities~$p$ in the range ${n^{-2/5} \ll p \ll 1}$. 
This resolves open problems and questions of Lichev, Mitsche and Warnke as well as Alon and~Krievelevich. 

One major proof difficulty stems from high-degree vertices, which prevent maximal cliques in their neighborhoods: 
we deal with these vertices by an intricate union bound argument, that combines the probabilistic method with new degree counting arguments in order to enable Janson's inequality. 
This way we  determine the asymptotics of the clique chromatic number of~$G_{n,p}$ in some ranges, 
and discover a surprising new phenomenon that contradicts earlier predictions for edge-probabilities~$p$ close~to~$n^{-2/5}$. 
\end{abstract}

\section{Introduction}\label{sec:intro}
What can we say about the typical behavior of the chromatic number~$\chi(\Gnp)$ of a random graph?
This question has tremendously impacted the development of probabilistic combinatorics~\mbox{\cite{SS1987,BB1988,AK1997,AN2005,H2021}}. 
We revisit this fundamental question for a widely-studied variant of the chromatic number  
that is more difficult to analyze, 
due to non-standard properties such as (i)~lack of monotonicity 
and (ii)~multiple different typical behaviors. 
In particular, despite considerable attention~\cite{MMP2019,AK2018,DZ2023,LMW2023}, 
even the typical order of magnitude of the clique chromatic number~$\chi_c(\Gnp)$ of the binomial random graph~$\Gnp$ 
remained an open problem for edge-probabilities~$p$ in the range~${n^{-2/5} \ll p \ll 1}$.
In this paper we essentially close this knowledge gap regarding~$\chi_c(\Gnp)$, 
and discover a surprising new behavior of~$\chi_c(\Gnp)$ for~$p=n^{-2/5+o(1)}$ that contradicts earlier~predictions. 


The \emph{clique chromatic number} $\chi_c(G)$ of a graph~$G$ 
is the smallest number of colors required to color the vertices of~$G$ so that no inclusion-maximal clique is monochromatic (ignoring isolated vertices). 
There is no simple connection between~$\chi_c(G)$ and the normal chromatic number~$\chi(G)$, which by definition satisfies~$\chi_c(G) \le \chi(G)$.
For example, we have equality~$\chi_c(G) = \chi(G)$ for triangle-free graphs~$G$, and the large gap ${2 = \chi_c(K_n)} \ll {\chi(K_n)=n}$ for complete graphs.
Two further differences are that $\chi_c(G)$ is not monotone with respect to taking subgraphs, and that the algorithmic problem of deciding whether~$\chi_c(G) \le 2$ is already NP-complete~\cite{KT2002,BGGPS2004,Marx2011}; for more structural results on~$\chi_c(G)$ see~\cite{DSSW1991,MS1999,SLK2014,CPTT2016,MMP2018,FPS2018,GPRS2021}. 
Overall, these non-standard properties suggest that the clique chromatic number~$\chi_c(G)$ is harder to determine than the usual chromatic~number~$\chi(G)$.

McDiarmid, Mitsche and Pra{\l}at~\cite{MMP2019} initiated the study of the clique chromatic number~$\chi_c(\Gnp)$ of the 
binomial random graph~$\Gnp$ with edge-probability~$p=p(n)$ and vertex set~$V=V(\Gnp):=\{1, \ldots, n\}$.
Perhaps surprisingly, they discovered three different behaviors of~$\chi_c(\Gnp)$:  for any fixed~$\eps>0$, they showed that typically  
\begin{equation}\label{eq:previous}
 \chi_c\bigpar{\Gnp} = 
\begin{cases}
\Theta\Bigpar{\frac{np}{\log(np)}} \quad   & \: \text{if $n^{-1} \ll p \le n^{-1/2-\eps}$,} \\
\Theta\Bigpar{\frac{p^{3/2}n}{\sqrt{\log n}}} & \: \text{if $n^{-1/2+\eps} \le p \le n^{-2/5-\eps}$,} \\
\tilde{\Theta}\Bigpar{\frac{1}{p}} & \: \text{if $n^{-2/5+\eps} \le p \le n^{-1/3-\eps}$ or $n^{-1/3+\eps} \le p \le n^{-\eps}$,} 
\end{cases}
\end{equation}
where the $\tilde{\Theta}$-notation suppresses extra logarithmic factors, as usual. 
In concrete words, \eqref{eq:previous} determines the typical value of~$\chi_c(\Gnp)$ 
up to constants factors for most~$p$ in the range~${n^{-1} \ll p \le n^{-2/5-\eps}}$, and up to logarithmic factors for most~$p$ in the range~${n^{-2/5+\eps} \le p \le n^{-\eps}}$. 
The fact that the form of~$\chi_c(\Gnp)$ changes in different edge-probability ranges adds to the increased level of difficulty, 
which intuitively explains why in~\eqref{eq:previous} we only know the `correct' order of magnitude of~$ \chi_c(\Gnp)$ in two out of three cases. 
The behavior of~$\chi_c(\Gnp)$ in the intermediate range~$n^{-1/2-\eps} \le p \le n^{-1/2+\eps}$ was clarified by Lichev, Mitsche and Warnke~\cite{LMW2023}, 
but their approach did not fully extend to~${p \ge n^{-2/5+\eps}}$, which they left as an open problem.
The typical order of~$\chi_c(\Gnp)$ for constant~$p \in (0,1)$ was resolved by Alon and Krivelevich~\cite{AK2018}, 
who pointed out that their argument does not give the `correct' order for~${p \ll 1}$, 
i.e., they raised the question what happens in the sparse case; see also~\cite{DZ2023}. 
To sum up: the logarithmic gaps in~\eqref{eq:previous} 
for~$n^{2/5+\eps} \le p \ll 1$  
remained the main open problem for~$\chi_c(\Gnp)$. 

\subsection{Main results}
The main result of this paper determines the typical order of magnitude of the clique chromatic number~$\chi_c(\Gnp)$ 
for most edge-probabilities~$p$ in the range~$n^{2/5+\eps} \le p \ll 1$. 
This closes the logarithmic gaps that were present until now, and 
resolves the aforementioned open problems and questions of~Lichev, Mitsche and Warnke~\cite{LMW2023} and Alon and Krivelevich~\cite{AK2018}, respectively. 
%
\begin{theorem}[Main result: order of magnitude]\label{thm:main1}%
Fix~$\eps >0$. 
If the edge-probability~$p=p(n)$ satisfies ${n^{-2/5 + \eps} \ll p \ll n^{-1/3}}$ or ${n^{-1/3 + \eps} \ll p \ll 1}$,
then with high~probability (i.e., with probability tending to one as $n \to \infty$) 
the clique chromatic number of the \mbox{binomial} random graph~$\Gnp$ satisfies
\begin{equation}\label{eq:main1}
    \chi_c\bigpar{\Gnp} \: = \: \Theta\bigpar{\log(n)/p} .
\end{equation}
\end{theorem}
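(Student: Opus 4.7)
The proof requires matching upper and lower bounds. For the upper bound $\chi_c(G_{n,p}) \le C\log(n)/p$, I would color the vertices of $G_{n,p}$ uniformly and independently with $k = C\log(n)/p$ colors and argue that, with positive probability (over both the random graph and the coloring), no maximal clique is monochromatic. Although a naive first-moment estimate already fails near the critical clique size $s_0 := \log_{1/p}(n)$ (because $\binom{n}{s}p^{\binom{s}{2}}k^{1-s}$ is too large there), the events $A_K := \{K \text{ is a monochromatic maximal clique}\}$ have a convenient independence structure: $A_K$ and $A_{K'}$ are independent when $K \cup K'$ and the relevant common-neighbor edge sets are disjoint. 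One can therefore invoke Janson's inequality (or the Lov\'asz Local Lemma) to obtain $\Pr\bigpar{\bigcap_K \overline{A_K}} > 0$ for $C$ large enough; this is essentially the strategy of McDiarmid, Mitsche and Pra\l at~\cite{MMP2019}.

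The matching lower bound $\chi_c(G_{n,p}) \ge c\log(n)/p$ is the new content, and my plan for it is as follows. One first shows that, with high probability, every vertex subset $C \subseteq V$ of size $\ge c_0 np/\log n$ that is \emph{not} essentially contained in the neighborhood of any single vertex contains a maximal clique of $G_{n,p}$. For a fixed such generic $C$, I would use two-round exposure. Round one: reveal the edges inside $C$, and use Janson's inequality (lower tail) on the count of $s$-cliques in $G[C]$, where $s := \lceil s_0 \rceil + O(1)$, to show that with failure probability small enough to survive a union bound over the $\binom{n}{|C|}$ candidate sets $C$, the number of $s$-cliques in $G[C]$ is at least half its expectation. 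Round two: reveal the edges from $C$ to $V\setminus C$; since $np^s \ll 1$ by the choice of $s$, a constant fraction of the candidate cliques have no common neighbor outside $C$, and a second Janson bound produces at least one such surviving clique, which is then a maximal clique of $G_{n,p}$ inside $C$. Combined with the handling of the genericity caveat below, this yields the lower bound.

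The main obstacle is precisely the genericity caveat: a color class $C$ contained in some neighborhood $N(v)$ is automatically clique-free (since $v$ extends every clique in $C$), yet $|N(v)| \sim np$ can be up to a $\log n$-factor larger than $c_0 np/\log n$, so such classes cannot be ruled out by size alone. To handle this one must combine the clique-finding argument above with a careful analysis of how many vertices can be absorbed into neighborhood-contained classes: the plan is to (i)~condition on a high-probability event that controls the degree sequence, in particular $\Delta(G_{n,p}) \le (1+o(1))np$, and the structure of common neighborhoods; (ii)~show via degree counting that neighborhood-contained color classes can collectively cover only a negligible fraction of $V$, using the fact that disjoint large classes sharing a witness $v$ can jointly cover at most $|N(v)|$ vertices; and (iii)~apply the two-round Janson argument to the remaining generic portion of each color class, conditional on the degree data. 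The core difficulty is that conditioning on high-degree vertex data disturbs the independence that Janson's inequality requires, and preserving enough independence through a careful degree-counting framework is, I expect, the central technical innovation required---this is precisely the ``intricate union bound argument that combines the probabilistic method with new degree counting arguments in order to enable Janson's inequality'' that the abstract emphasizes.
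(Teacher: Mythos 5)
Your lower-bound plan has two genuine gaps. First, the treatment of neighborhood-contained classes does not work: the claim in step~(ii), that such classes can collectively cover only a negligible fraction of $V$, is false. Whp the vertex set of $\Gnp$ can be covered by $O(\log(n)/p)$ open neighborhoods (the domination-type covering number is $(1+o(1))\log(np)/p$), and a partition refining such a cover is a perfectly valid clique coloring in which \emph{every} class is neighborhood-contained; the fact that distinct classes may use distinct witnesses $v$ defeats the ``jointly at most $|N(v)|$'' count. The real obstruction is quantitative rather than structural: with $s=\delta\log_{\frac{1}{1-p}}(n)$ colors, any choice of one external witness per class leaves about $n^{1-\delta}$ mutual non-neighbors uncovered, and by averaging some class must contain $\ge n^{1-\delta}/s$ of them. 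The paper exploits exactly this (Lemma~\ref{lem:non-nbr:lb} plus the extremal choice of witnesses in \refS{sec:mainproof}) to produce a \emph{useful} class $W$ in which every outside vertex has at least $\ell_1(W)$ non-neighbors --- note $|W|$ may be only about $n^{1-\delta}p/\log n$, far below your threshold $c_0np/\log n$, and your pigeonhole class of size $n/s$ may simply fail your genericity condition, at which point your scheme has nothing to say about it.

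Second, your two-round exposure for a fixed class $C$ does not survive the union bound over the $\binom{n}{|C|}$ candidate sets. Whether $C$ is a color class, and whether it is ``generic,'' is determined by the edges between $C$ and $V\setminus C$, so you cannot treat those edges as fresh randomness in a second round; you must condition on them first (the paper's $\Xi_W$) and then only the edges inside $W$ remain random. Consequently the count of maximal-clique \emph{candidates} must be carried out deterministically given the outside edges, which is where the actual difficulty lives: a single outside vertex adjacent to almost all of $C$ wipes out essentially all candidate cliques simultaneously, and the probability of such a vertex existing is nowhere near small enough to beat $\binom{n}{|C|}$. The paper works around this with a probabilistic-method pseudo-partition of $W$ into $A,B_1,\ldots,B_m$ forcing the top-$m$ degree vertices to miss some $B_i$ (Lemma~\ref{lem:partition}), followed by level-set and codegree-based degree counting (Lemma~\ref{lem:counting}, property~\ref{lem:density:iii} of Lemma~\ref{lem:density}) to show a constant fraction of structured candidates survive, and only then applies Janson to the inside edges with a union bound over all $W$ (Lemma~\ref{lem:Janson}). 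You correctly name the high-degree bottleneck, but your proposal supplies no mechanism for it, and that mechanism is the heart of the proof. (For the upper bound, citing~\cite{MMP2019} is fine, but your random-coloring argument as stated only gives positive probability over the joint graph-and-coloring space, which does not yield a whp statement about $\Gnp$; the standard and the paper's route is a greedy neighborhood-covering coloring.)
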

The main contribution of \refT{thm:main1} is the lower bound in~\eqref{eq:main1}.
Previous work~\cite{MMP2019,LMW2023} obtained the bound ${\chi_c(\Gnp) = \Omega(1/p)}$ by showing that no color class can have `too many' vertices, 
and our logarithmic improvement required us to look at all color classes simultaneously (instead of one color class at a time).  
Indeed, our proof strategy for the improved lower bound ${\chi_c(\Gnp) = \Omega(\log(n)/p)}$ is to show that, 
in any vertex coloring of~$\Gnp$ with `too few' colors, there is an inclusion-maximal clique in some color class. 
To prove the existence of these cliques, we shall use an intricate union bound argument that
relies on the probabilistic method and new degree counting arguments,  
in order to work around the high-degree vertices bottleneck from previous work~\cite{MMP2019,LMW2023,AK2018}. 
In particular, this argument enables us to construct a sufficiently large collection of inclusion-maximal cliques candidates 
in one color class that is amenable to an application of Janson's inequality (a large-deviation inequality for non-existence),
which in turn allows us to show that at least one of these candidates is in fact an inclusion-maximal clique in~$\Gnp$; 
see \refS{sec:strategy} for more~details about our proof~strategy.


To improve our understanding of the clique chromatic number~$\chi_c(\Gnp)$ of~$\Gnp$, 
in view of \refT{thm:main1} it is desirable to understand 
(i)~if we can determine asymptotics of $\chi_c(\Gnp)$ for some~$p$ in the range~$n^{-2/5+\eps} \le p \ll 1$, and 
(ii)~if the extra~$n^{\eps}$ factors in the assumptions $p \gg n^{-2/5 + \eps}$ and~$p \gg n^{-1/3 + \eps}$ of \refT{thm:main1} are necessary. 
Our proof techniques are powerful enough to provide some insights into these natural questions. 
In particular, \refT{thm:main2} determines the typical asymptotics of~$\chi_c(\Gnp)$ in the sparse range~${n^{-o(1)} \le p \ll 1}$. 
\begin{theorem}[Sparse asymptotics]\label{thm:main2}%
\sloppy 
If the edge-probability $p=p(n)$ satisfies ${n^{-o(1)} \le p \ll 1}$,
then with high~probability the clique chromatic number of the binomial random graph~$\Gnp$ satisfies
\begin{equation}\label{eq:main2}
    \chi_c\bigpar{\Gnp} \: = \:     \bigpar{\tfrac{1}{2}+o(1)}  \log(n)/p .
\end{equation}
\end{theorem}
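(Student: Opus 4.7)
Plan. \refT{thm:main1} already provides the order of magnitude $\chi_c(\Gnp) = \Theta(\log n/p)$ in the range $n^{-o(1)} \le p \ll 1$, so the task reduces to pinning down the leading constant as $1/2$ via matching upper and lower bounds.

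Upper bound. Fix $\eps>0$ and set $k := \lceil (1/2 + \eps)\log n/p\rceil$. I would use an anchor construction in the spirit of Alon and Krivelevich: pick $v_1, \ldots, v_k \in V$ uniformly at random, color each non-anchor $u$ by $\min\{i : u \sim v_i\}$ if this minimum exists, and place the anchors together with the remaining ``uncovered'' non-anchors into a single extra class $V_0$. For every $1 \le i \le k$, each clique $C \subseteq V_i$ is dominated by $v_i \notin V_i$, so $C$ is not maximal in $\Gnp$. The only risk is a maximal clique in $V_0$: each non-anchor lies in $V_0$ with probability $(1-p)^k \le n^{-1/2-\eps}$, so a Chernoff bound gives $|V_0| \le n^{1/2-\eps/2}$ whp. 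Since edges among non-anchors are independent of the anchor--coloring data, $G[V_0 \setminus \{v_1, \ldots, v_k\}]$ is stochastically dominated by $G_{m,p}$ with $m \le n^{1/2-\eps/2}$, whose clique number is at most $(1-\eps)\log n/\log(1/p)$ whp. For any clique $C \subseteq V_0$ of such size, the expected number of common neighbors in $V \setminus V_0$ is at least $n^{\eps/2}$, and a direct union bound over $C$ shows that whp every such clique is dominated from outside, hence not maximal in $\Gnp$. This gives a valid coloring using $k+1 = (1/2+\eps+o(1))\log n/p$ colors.

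Lower bound. Fix $\eps > 0$ and consider any coloring of $\Gnp$ with $k \le (1/2 - \eps)\log n/p$ colors. Following the strategy of \refT{thm:main1}, I would look at all color classes simultaneously: in each color class $V_i$ I would build a rich family of candidate cliques of size $s^{\star} := (1 + o(1))\log n/\log(1/p)$, chosen so that (i)~the maximality factor $(1-p^{s^{\star}})^{n-s^{\star}}$ is bounded away from $0$, and (ii)~$s^{\star}$ lies well below the clique number $\ge (1+\eps')\log n/\log(1/p)$ of $G[V_i]$, since pigeonhole forces $|V_i| \ge n/k \gg n^{1/2}$. Janson's inequality applied to the indicator events ``$C$ is a clique of $\Gnp$ with no dominator in $V \setminus C$'' over candidates $C \in \binom{V_i}{s^{\star}}$ should yield that whp at least one such $C$ is a maximal clique of $\Gnp$, which is monochromatic in color $i$. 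A careful union bound over the relevant ``types'' of colorings (captured by the combinatorial structure of their large color classes) then closes the argument.

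Main obstacle. The difficulty is the same high-degree-vertex bottleneck that \refT{thm:main1} must address: a vertex of degree much larger than $np$ is adjacent to most of any $V_i$, hence dominates a huge fraction of the candidate cliques, simultaneously killing maximality and inflating Janson's correlation term $\Delta$. I would import the paper's strategy: discard the $\le n^{1-\Omega(1)}$ high-degree vertices via a degree-counting union bound, then apply Janson's inequality only to a sub-family of candidate cliques -- constructed via the probabilistic method -- that avoid those vertices. The sharp constant $1/2$ arises from matching the two sides: the anchor-based construction becomes tight precisely when $|V_0| \approx \sqrt{n}$, forcing $k \approx (1/2)\log n/p$, while the Janson--union-bound combination degrades at the same threshold, so both bounds meet at $(1/2 + o(1))\log n/p$.
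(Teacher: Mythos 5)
Your upper bound is essentially the paper's own route (\refL{lem:upper-bound}/\refR{rem:upper-bound}, following~\cite{MMP2019,AK2018}): an anchor/dominator coloring with about $(\tfrac12+\eps)\log(n)/p$ colors leaving an uncovered set of size roughly $n^{1/2-\Omega(\eps)}$, whose cliques are too small to be inclusion-maximal in~$\Gnp$. The only loose end there is that you put the anchors themselves into~$V_0$: mixed anchor/uncovered cliques cannot occur (uncovered vertices have no anchor neighbors), but anchor-only cliques in~$V_0$ need a separate (easy) domination argument, or a separate color class as in the paper's procedure.

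The lower bound, however, has a genuine gap. First, pigeonhole only gives that \emph{some} class has size at least $n/k$, and with $k=(\tfrac12-\eps)\log(n)/p$ colors this size is about $2np/\log n\ll np$; so largeness of a class is not the relevant property at all: a class of this size can a priori lie entirely inside the open neighborhood of a single vertex of \emph{typical} degree, in which case no subset of it is an inclusion-maximal clique, your "maximality factor" is identically zero, and the Janson count vanishes. Your proposed fix --- discarding the $n^{1-\Omega(1)}$ high-degree vertices --- does not touch this, since the dominating vertex need not have atypically high degree. The paper's missing ingredient is the reduction to a \emph{useful} class: for each class choose an outside vertex with the fewest non-neighbors in it, apply the mutual-non-neighbor property of \refL{lem:non-nbr:lb} to this set of $s$ vertices, and average, to obtain one class in which \emph{every} outside vertex has at least $(1-o(1))n^{1-\delta}/s$ non-neighbors; only with this guarantee can one build (via the pseudo-partition $A,B_1,\ldots,B_m$, the pseudorandomness of \refL{lem:density}, and the counting in \refL{lem:counting}) a rich dominator-free candidate family. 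Second, Janson's inequality does not apply directly to your compound events ``$C$ is a clique of $\Gnp$ with no dominator in $V\setminus C$'', which are neither increasing nor decreasing; the paper conditions on all edges outside $\binom{W}{2}$ (the variable $\Xi_W$), which determines usefulness and the candidate family, and applies Janson only to the increasing clique events, obtaining failure probability $n^{-2|W|}$ --- this is exactly what makes your vague ``union bound over types of colorings'' precise (a union bound over all vertex subsets $W$). Finally, quoting \refT{thm:main1} as a black box only yields $\delta=\Omega(1)$; to reach the constant $\tfrac12$ one must rerun the machinery of \refT{thm:max-clique} with the sharper constraints $\delta\le \tfrac12-\rho-o(1)$ and $\delta\le\tfrac{k-1}{k}\bigpar{1-\rho(k/2+1)}-o(1)$ for $k\approx\log_{1/p}(n)$, which your sketch only gestures at.
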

The typical asymptotics~\eqref{eq:main2} of $\chi_c(\Gnp)$ are consistent with a result of Demidovich and Zhukovskii~\cite{DZ2023} for constant~${p \in [0.5,1)}$, 
and thus it is natural to wonder whether \refT{thm:main2} can be extended to smaller edge-probabilities~$p=p(n)$. 
It turns out that this is not possible: the leading constant in~\eqref{eq:main2} must be smaller than~$1/2$ for~$n^{-2/5} \le p \le n^{-\Omega(1)}$ due to the upper bound of \refL{lem:upper-bound} in \refS{sec:upper}. 
This difference is further highlighted by \refT{thm:main3}, which determines the typical asymptotics of~$\chi_c(\Gnp)$ for very small edge-probabilities~$p=p(n)$. 
\begin{theorem}[Very sparse asymptotics]\label{thm:main3}%
\sloppy 
Suppose that~$\omega=\omega(n) \gg 1$. If the edge-probability~$p=p(n)$ satisfies ${(\log n)^{\omega}n^{-2/5} \leq p \ll n^{-1/3}}$,
then with high~probability the clique chromatic number of the binomial random graph~$\Gnp$ satisfies
\begin{equation}\label{eq:main3}
    \chi_c\bigpar{\Gnp} \: = \:     \bigpar{\tfrac{5}{2} +o(1) }\log\bigpar{n^{2/5}p}/p .
\end{equation}
\end{theorem}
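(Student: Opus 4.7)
The plan is to establish matching upper and lower bounds of $(5/2+o(1))\log(n^{2/5}p)/p$ on $\chi_c(G_{n,p})$.

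The upper bound is delivered directly by \refL{lem:upper-bound} from \refS{sec:upper}, instantiated at the parameter values appropriate for the range $(\log n)^\omega n^{-2/5} \le p \ll n^{-1/3}$; this is essentially the same upper bound already alluded to as forcing the leading constant in \refT{thm:main2} to drop strictly below $1/2$ once $p \le n^{-\Omega(1)}$.

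For the matching lower bound, I would follow the Janson-based strategy used in the proof of \refT{thm:main1}, but now tracking the leading constant exactly. The skeleton is as follows. First, in any $c$-coloring with $c \le (5/2 - \eps)\log(n^{2/5}p)/p$, pigeonhole produces a color class of size at least $m := \lceil n/c \rceil$. Second, show that with probability $1 - o(1)$ over $G_{n,p}$, every $m$-subset $S \subseteq V$ contains an inclusion-maximal clique of $G_{n,p}$; this forces some color class to contain a monochromatic maximal clique. For each fixed $S$, existence of such a clique is established by applying Janson's inequality to a carefully chosen family of candidate cliques $K \subseteq S$ of some critical size $t$. In this regime $np^t \to 0$ for all $t \ge 3$, so any $K_t$ in $G_{n,p}$ is automatically maximal with overwhelming probability, and the choice of $t$ is dictated by the requirement that the Janson lower-tail bound $\exp(-\Omega(\mu^2/(\mu+\Delta)))$ beat the union-bound cost $\binom{n}{m} \le \exp(m\log(en/m))$ over all $m$-subsets.

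The main obstacle, already flagged in the introduction, is the high-degree vertex bottleneck: vertices of atypically large degree can act as common neighbors and thereby prevent maximality of nearby cliques, disrupting the Janson argument. As in the proof of \refT{thm:main1}, I would handle this by restricting the candidate cliques to lie inside a carefully selected subset of well-behaved (near-average-degree) vertices, obtained by combining the probabilistic method with the new degree-counting arguments developed there. Threading this restriction through the argument while keeping the Janson $\Delta$-term under control, and optimizing the choice of the clique size $t$, is what produces the tight constant $5/2$ rather than some sub-optimal~constant.
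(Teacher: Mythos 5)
There are two genuine gaps. First, the upper bound: \refL{lem:upper-bound} does \emph{not} deliver \eqref{eq:main3}. For $p=n^{-2/5+\eps}$ in the stated range one has $\rho=\log_n(1/p)=2/5-\eps$, so \refL{lem:upper-bound} only gives a leading constant $\tfrac12-\rho(\tfrac12-\rho)+o(\rho)\approx 0.46$ in front of $\log(n)/p$, whereas \eqref{eq:main3} requires $(\tfrac52+o(1))\log(n^{2/5}p)/p=(\tfrac{5\eps}{2}+o(1))\log(n)/p$, which is $o(\log(n)/p)$ when $\eps=o(1)$. The correct ingredient is \refL{lem:upper-bound-epsilon-1}, whose proof is not a variant of \refL{lem:upper-bound}: after the greedy phase with about $\tfrac{5\eps}{2}\log(n)/p$ colors, the leftover set $N$ has $|N|=\Theta(n^{1-5\eps/2})=\Theta(p^{-5/2})$, so $\Gnp[N]\sim G_{|N|,p}$ sits exactly at $p=\Theta(|N|^{-2/5})$, and one bootstraps the bound $\chi_c(G_{|N|,p})=O\bigl(|N|p^{3/2}/\sqrt{\log|N|}\bigr)=O\bigl(1/(p\sqrt{\log n})\bigr)$ from McDiarmid--Mitsche--Pra{\l}at, exploiting the non-monotone behavior of $\chi_c$ around $n^{-2/5}$. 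This idea is absent from your proposal, and without it the upper bound does not come close to the claimed constant.

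Second, the lower bound skeleton is unsound as stated. With $c\le(\tfrac52-\eps)\log(n^{2/5}p)/p$ your pigeonhole class has size $m\approx n/c\gg$ $np/\log n$ but still $m\ll np$, and it is simply false that whp every $m$-subset contains an inclusion-maximal clique: an $m$-subset $S$ of the neighborhood of a single (typical-degree, not high-degree) vertex $v$ contains no maximal clique at all, since every clique in $S$ extends by $v$; no restriction of the candidate cliques \emph{inside} $S$ can repair this, because the obstruction is the location of $S$ itself. This is exactly the bottleneck discussed in \refS{sec:strategy}, and the paper's resolution is structural rather than size-based: using the mutual non-neighbor property of \refL{lem:non-nbr:lb} and an extremal/averaging argument over the coloring, one shows some color class $W$ is \emph{useful} (every outside vertex has many non-neighbors in $W$), and only useful $W$ are fed into the candidate construction, Janson's inequality, and the union bound over all such $W$ (\refT{thm:max-clique}). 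Your proposal has no analogue of step (i), so the union bound you describe is over a family of sets for which the desired event fails deterministically. Finally, the sharp constant $5/2$ does not come merely from optimizing a clique size $t$: it requires the refined choice $\delta=(1+o(1))\tfrac{5\eps}{2}$ of \refR{rem:max-clique}, which in turn rests on the improved Janson exponent for the case $m=1$, $k=3$ (\refR{rem:Janson} and \refS{sec:main:rem}); this quantitative refinement is only gestured at in your sketch.
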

%
Writing~$p=n^{-2/5+\eps}$, note that \refT{thm:main3} implies that $\chi_c(\Gnp) =(1+o(1)) 5 \eps/2 \cdot \log(n)/p$ holds with high~probability, 
which in concrete words says that the leading constant in front of~$\log(n)/p$ changes `adaptively' with~$p$.
This is a new and surprising phenomenon, 
which in particular implies (formally using \refL{lem:upper-bound-epsilon-1} from \refS{sec:upper} to slightly decrease the assumed lower bound on~$p$) 
 that with high~probability 
\begin{equation}\label{eq:main3Lcor}
   \chi_c\bigpar{\Gnp} \: = \:  o\bigpar{\log(n)/p} \qquad  \:  \text{if~$n^{-2/5} \ll p \le n^{-2/5+o(1)}$.}
\end{equation}
This `smallness' conclusion is interesting for several reasons.
Namely, \eqref{eq:main3Lcor} shows that in \refT{thm:main1} the assumption~${p \gg n^{-2/5 + \eps}}$ with fixed~$\eps>0$ is necessary to conclude~\eqref{eq:main1}. 
Furthermore, \eqref{eq:main3Lcor} contradicts the earlier prediction of Lichev, Mitsche and~Warnke~\cite{LMW2023} that typically $\chi_c(\Gnp) = \Theta(\log(n)/p)$ 
 for~${n^{-2/5}  (\log n)^{3/5} \ll p \ll 1}$. 
It may be possible that the typical behavior of $\chi_c(\Gnp)$ undergoes additional surprises when~$p$ is around~$n^{-1/3}$, 
which is one of several interesting open problems that we discuss in the concluding \refS{sec:concluding} of this paper.

\subsection{Proof difficulties and strategy}\label{sec:strategy}
In the following we informally discuss some difficulties that our proof of \refT{thm:main1} needs to overcome, 
in order to establish the lower bound~$\chi_c(\Gnp) = \Omega(\log(n)/p)$ on the clique chromatic number of the random graph~$\Gnp$. 

The natural proof approach would be a union bound argument over all color classes, which unfortunately does not give the desired lower bound. 
Indeed, if we try to prove ${\chi_c(\Gnp) > n/r}$ by showing that every set of~$r$ vertices contains a clique that is inclusion-maximal in~$\Gnp$, then we get the wrong logarithmic factor. 
To see this, consider a set~$U$ in the neighborhood of a vertex: by construction~$U$ contains no inclusion-maximal cliques, which together with the typical neighborhood size implies that the aforementioned argument requires~${r = \Omega(np)}$. 
It follows that this kind of union bound argument can at best give ${\chi_c(\Gnp) = \Omega(1/p)}$, which in fact was established for many edge-probabilities~$p=p(n)$ by McDiarmid, Mitsche and Pra{\l}at~\cite{MMP2019}; see also~\cite{LMW2023}.

\enlargethispage{\baselineskip}

The basic idea for bypassing the discussed union bound bottleneck is to look at all color classes simultaneously. 
More concretely, for suitable ${s=\Theta(\log(n)/p)}$ the plan is to argue that for any vertex partition into $s$ color classes, there is at least one color class which contains a clique that is inclusion-maximal in~$\Gnp$, which in turn implies that $\chi_c(\Gnp) > s={\Theta(\log(n)/p)}$. 
To implement this plan, we would like to use the following three-step approach:%
{\vspace{-0.25em}
\begin{itemize} 
\itemsep 0em \parskip 0.125em  \partopsep=0.125em \parsep 0.125em  
\item[(i)] first we show that any such \mbox{$s$ vertex coloring} contains at least one `useful' color class $W$, i.e., where every vertex outside of~$W$ has a reasonable number of non-neighbors in~$W$, 
\item[(ii)] then we show that any such useful~$W$ contains many maximal clique candidates, i.e., subsets~$K \subseteq W$ which have no common neighbor outside of~$W$, and 
\item[(iii)] finally we show that at least one of these maximal clique candidates~$K$ is a clique in~$\Gnp$, by combining Janson's inequality (a large-deviation inequality for non-existence) with a union bound argument over~all~$W$.\vspace{-0.25em} 
\end{itemize}\vspace{-0.25em}}%
\noindent 
For constant edge-probabilities~$p \in (0,1)$ this plan was implemented by Alon and Krivelevich~\cite{AK2018}, 
who pointed out that their argument gives $\chi_c(\Gnp) = {\Omega(\log(n)/\log(1/p))}$ in the sparse case~${p \to 0}$,
which is weaker than our desired lower bound. 
In the sparse case, 
akin to the analysis of large independent sets in random constructions of clique-free graphs~\cite{KK3,BK3,BKH,LWK4,GWK3}, 
the main bottleneck for the three-step proof approach are vertices outside of~$W$ with many neighbors in~$W$:  
these can severely limit the number of maximal clique candidates inside~$W$ in step~(ii), 
which in turn prevents Janson's inequality from yielding good enough failure probabilities in~step~(iii). 

The core idea for limiting the impact of the high-degree vertices outside of~$W$ 
is to combine the probabilistic method and degree counting arguments with pseudorandom properties of~$\Gnp$, 
in order to show the existence of many `well-behaved' maximal clique candidates in~$W$. 
To implement this plan, we first use the probabilistic method to randomly partition~$W$ into $m+1$ parts~$A,B_1, \ldots, B_m$, 
in a way that ensures that all vertices outside of~$W$ with many neighbors in~$A$ (say at least~$0.8|A|$ many) have zero neighbors in one of the parts~$B_1, \ldots, B_m$; see \refL{lem:partition}. 
For carefully chosen parameters~$m,k$ (see \refL{lem:choices} and \refS{sec:choices}), we then focus on the `structured' collection~$\cC$ of $k$-element subsets~$K \subseteq W$ whose vertices satisfy~$|K \cap A|=k-m$ and~$|K \cap B_i|=1$ for all $1 \leq i \leq m$, the point being that these~$K \in \cC$ are by construction not contained in the neighborhood of any vertex outside of~$W$ with many neighbors in~$A$. 
To implement step~(ii), the plan is 
to then show that~$\cC$ contains a large subcollection~$\cC' \subseteq \cC$ of maximal clique candidates, say at least~$|\cC'| \ge |\cC|/10$ many.
We would like to implement this via degree-counting arguments to estimate the number of `bad'~$K \in \cC$, i.e., where~$K$ is contained in the neighborhood of some vertex outside of~$W$. 
This works for reasonably sparse edge-probabilities~$p=p(n)$ by adapting the (level-set based) 
degree counting arguments of Lichev, Mitsche and Warnke~\cite{LMW2023}. 
But in order to go down to very sparse edge-probabilities~$p \gg n^{-2/5+\eps}$, we needed to develop a new degree counting argument (whose proof exploits codegree information)
to deal with vertices outside of~$W$ that have a reasonable number of neighbors in~$A$; see \refL{lem:counting} and property~\ref{lem:density:iii} in \refL{lem:density}. 
After these preparations, we are then in position to implement step~(iii): we show that at least one of the~$|\cC'| \ge |\cC|/10$ many maximal clique candidates~$K \in \cC'$ is a clique in~$\Gnp$, by combining Janson's inequality with a union bound argument;~see~\refL{lem:Janson}. 

Finally, in addition to the above-discussed difficulties, many of the technicalities in \refS{sec:main} stem from the fact that there is very little elbow room in the arguments for very sparse~$p$, including the `correct' choice of~$k$ and~$m$ (which are both fairly small constants in that range of~$p$, see \refS{sec:param:choices}). 
Furthermore, in order to obtain the correct asymptotics in \refT{thm:main3}, we needed to develop a new upper bound on the clique chromatic number that bootstraps existing upper bounds (to color certain  remaining `leftover' vertices); see~\refL{lem:upper-bound-epsilon-1}.

\subsection{Organization of the paper}
The remainder of this paper is organized as follows.
In the next subsection we state our main technical result, and show how it implies our main results \mbox{\refT{thm:main1}--\ref{thm:main3}}.
The proof of this technical result in Sections~\ref{sec:main}--\ref{sec:main:rem} uses some additional auxiliary results, whose standard proofs are given in~\refS{sec:deferred}. 
New upper bounds on $\chi_c(\Gnp)$ are given in \refS{sec:upper}.  
The final Section~\ref{sec:concluding} contains some concluding remarks and open problems.

\subsection{Main technical result: proofs of \refT{thm:main1}--\ref{thm:main3}}\label{sec:mainproof} 
The main technical result of this paper is Theorem~\ref{thm:max-clique} below. 
As we shall demonstrate in this subsection, it is key for establishing the lower bounds on the clique chromatic number~$\chi_c(\Gnp)$ in our main results \refT{thm:main1}--\ref{thm:main3} (the corresponding upper bounds are conceptually much simpler). 
More concretely, our goal is to show that we typically have~$\chi_c(\Gnp) > s$, 
where we defer our choice of the real-valued parameter~$\delta = \delta(n,p) \in (0,1)$~in 
\begin{equation}\label{def:s}
s = s(n,p,\delta) := \left \lfloor \delta\log_{\frac{1}{1-p}}(n)\right \rfloor .
\end{equation}
For this our starting point is the following auxiliary result, which formalizes certain pseudorandom properties of~$\Gnp$.    
The proof of \refL{lem:non-nbr:lb} is based on standard Chernoff bounds, and thus deferred to~\refS{sec:lem:non-nbr:lb}. 
As usual, we henceforth use whp as an abbreviation for with high probability (i.e., with probability tending to~one as~$n \to \infty$).
Recall that~$V=\{1, \ldots, n\}$ denotes the vertex set of~$\Gnp$ (as defined above~\eqref{eq:previous}). 
%
\begin{lemma}\label{lem:non-nbr:lb}
If~$\delta = \delta(n,p) \in (0,1)$ satisfies~$n^{1-\delta}p \gg \log^2(n)$, then there is~$\tau=\tau(n,p,\delta)=o(1)$ such that whp the following holds in~$\Gnp$. 
Every set~$S \subseteq V$ of size~$|S| \leq s$ has at least $(1 - \tau)n^{1-\delta}$ mutual non-neighbors, 
where~${s = s(n,p,\delta)}$ is defined as in~\eqref{def:s}. 
Furthermore, every vertex~$v \in V$ has at most~$2np$ neighbors. 
\end{lemma}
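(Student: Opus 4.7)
The plan is to reduce both halves of the statement to standard Chernoff estimates followed by union bounds; the mutual non-neighbor claim is the more delicate part, while the maximum-degree claim is a short aside.

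I would first fix a set $S \subseteq V$ of size $t \leq s$: a vertex $v \in V \setminus S$ is a mutual non-neighbor of~$S$ precisely when none of the $t$ potential edges from~$v$ to~$S$ is present, which happens independently across $v \notin S$ with probability $(1-p)^t$. The key arithmetic observation is that, by the definition~\eqref{def:s},
\begin{equation*}
(1-p)^s \: \geq \: (1-p)^{\delta \log_{1/(1-p)}(n)} \: = \: n^{-\delta},
\end{equation*}
so the expected number of mutual non-neighbors is at least $(n-s)n^{-\delta} = (1-o(1)) n^{1-\delta}$. A standard Chernoff lower-tail inequality then yields, uniformly in $|S| \leq s$ and $\tau \in (0,1/2)$,
\begin{equation*}
\Pr\bigsqpar{S \text{ has fewer than } (1-\tau)n^{1-\delta} \text{ mutual non-neighbors}} \: \leq \: \exp\bigpar{-c \tau^2 n^{1-\delta}}
\end{equation*}
for some absolute constant $c > 0$. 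Union bounding over the at most $\binom{n}{\leq s} \leq n^s$ choices of $S$, and using the standard inequality $-\log(1-p) \geq p$ (so that $s \log n \leq \delta (\log n)^2/p$), the hypothesis $n^{1-\delta}p \gg (\log n)^2$ lets me pick some $\tau = \tau(n,p,\delta) = o(1)$ with $\tau^2 n^{1-\delta} \gg (\log n)^2/p$; this drives $n^s \exp(-c \tau^2 n^{1-\delta}) \to 0$ and so establishes the first claim whp.

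For the maximum-degree claim, each $\deg(v) \sim \Bin(n-1,p)$, and the hypothesis implies $np \gg n^{\delta} (\log n)^2 \gg \log n$. Chernoff's upper tail therefore gives $\Pr[\deg(v) > 2np] \leq \exp\bigpar{-\Omega(np)}$, which comfortably beats a union bound over the $n$ vertices. The only step requiring real care is the parameter matching in the first union bound: one needs $\tau^2 n^{1-\delta}$ to strictly dominate $s \log n = O((\log n)^2/p)$ while still keeping $\tau = o(1)$, which is possible precisely because the hypothesis leaves a growing gap between $n^{1-\delta}p$ and $(\log n)^2$.
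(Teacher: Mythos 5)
Your proposal is correct and follows essentially the same route as the paper: a Chernoff lower-tail bound for $\Bin(n-|S|,(1-p)^{|S|})$ combined with a union bound over the at most $n^{s+1}$ sets, choosing $\tau=o(1)$ so that $\tau^2 n^{1-\delta}$ dominates $s\log n = O(\log^2(n)/p)$, plus the standard Chernoff-and-union-bound argument for the degree claim. The only cosmetic point is that the mean is $(n-|S|)(1-p)^{|S|}\ge n^{1-\delta}(1-s/n)$ rather than $n^{1-\delta}$ itself, so the deviation should be measured against the mean (the paper uses $(1-\tau/2)\E X_S \ge (1-\tau)n^{1-\delta}$); your chosen $\tau$ satisfies $\tau \gg s/n$, so this is absorbed without any change to the argument.
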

%
We 
say that a set~$W \subseteq V$ is \emph{useful} if $|V \setminus W| \ge \max\{s-1,1\}$ and every vertex $v \in V\setminus W$ has at~least
\begin{equation}\label{def:ell1}
\ell_1(W) =\ell_1(n,p,\delta,W) := \max\Bigcpar{(1-\tau)n^{1-\delta}/s, \; |W|-2np} 
\end{equation}
many non-neighbors in~$W$, where~${s = s(n,p,\delta)}$ is defined as in~\eqref{def:s} and~${\tau=\tau(n,p,\delta)}=o(1)$ is defined as in~\refL{lem:non-nbr:lb}. 
By taking the non-neighbors into account, note that any useful set~$W$ has size at least
\begin{equation}\label{def:ell0}
|W| \ge \ell_1(W) \ge \ell_1(\emptyset) =: \ell_0 .
\end{equation}
After these preparations we are now ready to state our main technical result \refT{thm:max-clique} below, whose proof is spread across the core Sections~\ref{sec:main} and~\ref{sec:deferred} of this paper. 
\begin{theorem}[Main technical result]\label{thm:max-clique}
Suppose that the assumptions of \refT{thm:main1} hold.
Then there is a choice of the real-valued parameter~$\delta = \delta(n,p,\eps)$ with~$\min\{\eps,1/200\} \le \delta \le 1/2$ such that 
(i)~the assumptions of \refL{lem:non-nbr:lb} hold, 
(ii)~the parameter~${s = s(n,p,\delta)}$ defined in~\eqref{def:s} satisfies~$s \ge 2$ for all sufficiently large~$n$, and 
(iii)~whp every useful set~$W \subseteq V$ in $G_{n,p}$ contains a clique~$K$ so that every vertex $v \in V\setminus W$ has at least one non-neighbor in~$K$.
Furthermore, we have~$\delta = 1/2+o(1)$ when~$n^{-o(1)} \le p \ll 1$. 
\end{theorem}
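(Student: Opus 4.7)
Proof proposal: My plan is to implement the three-step framework outlined in \refS{sec:strategy}. The first task is to fix the scalar parameter $\delta \in [\min\{\eps,1/200\},1/2]$ so that (i)~$n^{1-\delta}p \gg \log^2 n$, as required by \refL{lem:non-nbr:lb}, and (ii)~$\delta=1/2+o(1)$ whenever $n^{-o(1)} \le p \ll 1$; in the two ranges $n^{-2/5+\eps} \ll p \ll n^{-1/3}$ and $n^{-1/3+\eps} \ll p \ll 1$ the optimal $\delta$ will look slightly different, which is why \refT{thm:main1} is split into cases. I would also choose auxiliary integer parameters $k$ and $m$ (small constants when $p$ is near $n^{-2/5}$, and mildly growing otherwise) to control the size of the structured families considered below.

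Now fix an arbitrary useful set $W \subseteq V$. The probabilistic method (see \refL{lem:partition}) produces a partition $W = A \cup B_1 \cup \dots \cup B_m$ with the property that every vertex $v \in V \setminus W$ having at least, say, $0.8|A|$ neighbors in $A$ has zero neighbors in some~$B_i$. I then restrict attention to the structured collection $\cC$ of $k$-subsets $K \subseteq W$ with $|K \cap A| = k-m$ and $|K \cap B_i| = 1$ for all~$i$. By construction, no high-$A$-degree vertex outside~$W$ can dominate any $K \in \cC$, which is exactly what is needed for Janson later.

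Next, I would thin $\cC$ down to a subcollection $\cC' \subseteq \cC$ of size at least $|\cC|/10$ consisting of \emph{maximal-clique candidates}, i.e., $k$-sets $K$ with no common neighbor in $V \setminus W$. The outside vertices are partitioned into level sets by their number of neighbors in $W$: high-$A$-degree ones are handled by the partition construction, medium-degree ones by level-set degree counting as in~\cite{LMW2023}, and, crucially for $p$ close to $n^{-2/5+\eps}$, by a new codegree-based count (the paper's \refL{lem:counting}, used through property~\ref{lem:density:iii} of \refL{lem:density}) for outside vertices with only a moderate number of $A$-neighbors. Once $\cC'$ is in hand, Janson's inequality applied to the event that none of the $K \in \cC'$ is a clique in $\Gnp$ yields a failure probability of order $\exp\bigpar{-\Omega\bigpar{|\cC'| p^{\binom{k}{2}}}}$; a union bound over the $\le 2^n$ choices of $W$ and over the partitions of $W$ will then close the argument, provided $k$, $m$, $\delta$ have been set so that $|\cC'| p^{\binom{k}{2}} \gg n$.

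The main obstacle is the low end of the range, $p=n^{-2/5+\eps}$, where $k$ and $m$ are unavoidably small constants and there is essentially no slack anywhere. Simultaneously obtaining $|\cC'| \ge |\cC|/10$ in the counting step and the Janson budget $|\cC'| p^{\binom{k}{2}} \gg n$ for the union bound forces the new codegree count \refL{lem:counting} to be used sharply, together with the delicate joint tuning of $\delta$, $k$ and~$m$ carried out in \refS{sec:param:choices}. Most of the technical work of \refS{sec:main} is expected to be devoted to executing these choices cleanly in both probability ranges and to verifying that the Janson covariance term is controlled by the highly structured intersection pattern of pairs $K,K' \in \cC'$.
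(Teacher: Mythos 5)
Your steps (i)--(ii) follow the same route as the paper (pseudo-partition of $W$ via the probabilistic method, then level-set and codegree counting to extract $\cC'\subseteq\acc$ with $|\cC'|\ge|\acc|/10$), but your step (iii) contains a genuine quantitative gap. First, Janson's inequality gives failure probability $\exp\bigpar{-\mu^2/(2(\mu+\Delta))}$ with $\mu=|\cC'|p^{\binom{k}{2}}$, and in this problem the correlation term $\Delta$ is \emph{not} $O(\mu)$: for instance with $k=3$, $m=1$ and $|W|=\Theta(n)$, pairs of candidates sharing two vertices of $A$ already give $\Delta/\mu$ of order $bp^2\approx n^{1/5+2\eps}\gg 1$. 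So the bound $\exp\bigpar{-\Omega(|\cC'|p^{\binom{k}{2}})}$ you budget for is not available; the true exponent is governed by the pairwise overlap terms, essentially $\min\{b^2p,\,b^kp^{\binom{k}{2}}\}$ up to $\mathrm{poly}(k)$ factors (cf.\ \eqref{eq:XW:ratio}--\eqref{eq:Janson:exponent}). Second, the requirement $|\cC'|p^{\binom{k}{2}}\gg n$ against a union bound over all $2^n$ sets $W$ is unattainable in the critical range: a useful set $W$ can be as small as $\ell_1(\emptyset)=\Theta\bigpar{n^{1-\delta}p/\log n}$, and then for $p=n^{-2/5+\eps}$, $k=3$, $m=1$ even $\mu\le|W|^3p^3=n^{3/5+o(1)}\ll n$, so no choice of parameters can deliver your budget.

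The fix, which is the heart of the paper's Lemma~\ref{lem:Janson}, is to stratify the union bound by $|W|$ (there are at most $n^{|W|}$ sets of each size), so that it suffices to prove the much weaker estimate $\mu^2/(2(\mu+\Delta))\gg|W|\log n$ for every useful and nice $W$; forcing $\min\{\lambda_2,\lambda_k\}\gg|W|\log n$ (in the paper's notation) down to the smallest useful $W$ is precisely what the constraint \eqref{ass:Janson-delta-bound} on $\delta$ encodes, and it is where the bounds $\delta\le 1/2-\rho$ and $\delta\le\tfrac{k-1}{k}\bigpar{1-\rho(k/2+1)}$ originate. Relatedly, you should not union bound over partitions of $W$ at all: since the candidate family depends on the graph, the partition $A,B_1,\ldots,B_m$ and hence $\cC'$ must be chosen as a deterministic function of the edge-status outside $\binom{W}{2}$ (the paper conditions on $\Xi_W$), so that Janson's inequality applies to the still-independent edges inside $W$; an extra union bound over partitions is unnecessary and signals that this conditioning step is missing from your outline.
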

Ignoring a number of details, in the below proofs of our main results 
we shall use \refT{thm:max-clique} and \refL{lem:non-nbr:lb} to essentially argue as follows.  
If we partition the vertex set of~$\Gnp$ into any~$s$ disjoint non-empty color classes~$W_1, \ldots, W_s$, 
then (i)~at least one of the classes~$W_j$ is useful and (ii)~this useful class~$W_j$ must contain an inclusion-maximal clique. 
This in turn establishes that $\chi_c(\Gnp) \ge {s+1} \ge {\delta \log_{\frac{1}{1-p}}(n)}$, 
which by the properties of~$\delta$ asserted by \refT{thm:max-clique} then establishes the lower bounds 
in Theorem~\ref{thm:main1} and~\ref{thm:main2}. 
\begin{proof}[Proof of Theorem~\ref{thm:main1} and~\ref{thm:main2}]
The upper bounds in~\eqref{eq:main1} and~\eqref{eq:main2} follow from Lemma~3.3 and Theorem~3.4 in~\cite{MMP2019}; 
see also the proof of \refR{rem:upper-bound} in \refS{sec:upper} for a self-contained proof. 

We now turn to the more interesting lower bounds in~\eqref{eq:main1} and~\eqref{eq:main2}, for which we pick~$\delta = \delta(n,p,\eps) \in (0,1)$ as in \refT{thm:max-clique}. 
Since these hold whp, we henceforth assume that the random graph~$G_{n,p}$ satisfies all the properties and assertions of both~\refL{lem:non-nbr:lb} and~\refT{thm:max-clique}, 
and also assume that~$n$ is sufficiently large so that~$s \ge 2$ by the second assertion of \refT{thm:max-clique}. 

Aiming at a contradiction, suppose that~$\chi_c(\Gnp)  \le s$, where~$s=s(n,p,\delta)$ is as defined in~\eqref{def:s}. 
Hence there exists a valid clique coloring of the vertices of~$G_{n,p}$ with non-empty color classes~$W_1, \ldots, W_s$.   
For each color class~$W_i$, we denote by~$v_i \in V \setminus W_i$ some vertex with the fewest number of non-neighbors in~$W_i$ among all vertices in~$V \setminus W_i$. 
Let~$N$ denote the set of mutual non-neighbors of~$S:=\{v_1, \ldots, v_s\}$. 
By the first assertion of Lemma~\ref{lem:non-nbr:lb}, we know that~$|N| \ge (1-\tau)n^{1-\delta}$. 
By averaging, there exists a color class~$W_j$ with~$|W_j \cap N| \ge |N|/s$. 
By construction, all vertices in~$W_j \cap N$ are non-neighbors of~${v_j \in S \cap (V \setminus W_j)}$.
By the extremal choice of~$v_j$, it thus follows that every vertex~$v \in V\setminus W_j$ has at least~$|W_j \cap N| \ge |N|/s \ge (1-\tau)n^{1-\delta}/s$ non-neighbors in~$W_j$.
By the degree assertion of Lemma~\ref{lem:non-nbr:lb}, it also follows that every~$v \in V\setminus W_j$ has at least $|W_j| - 2np$ non-neighbors in~$W_j$. 
Combining these two lower bounds, in view of~\eqref{def:ell1} and~$|V \setminus W_j| = \sum_{i \in [s]: i \neq j}|W_i| \ge s-1 \ge 1$ we infer that~$W_j$ is useful. 
By the third assertion of \refT{thm:max-clique}, it then follows that~$W_j$ contains a clique~$K$ so that every~$v \in V\setminus W_j$ has at least one non-neighbor in~$K$. 
This clique is contained in some inclusion-maximal clique, say~$K^+$. 
Note that~$K^+$ is contained in~$W_j$, since every vertex~$v \in V\setminus W_j$ has at least one non-neighbor in~$K \subseteq K^+$. 
Hence~$W_j$ contains an inclusion-maximal clique, which contradicts that~$W_j$ is the color class of a valid clique coloring of~$G_{n,p}$. 

To sum up, we so far obtained that whp
\begin{equation}\label{eq:chi:lower}
\chi_c\bigpar{\Gnp} \ge s+1 \ge \delta \log_{\frac{1}{1-p}}(n) =(1-o(1)) \frac{\delta \log(n)}{p} ,
\end{equation}
where the last estimate exploits that~$p \ll 1$ implies~$\log\bigpar{\tfrac{1}{1-p}}=-\log(1-p) = p(1+o(1))$.  
Note that~\eqref{eq:chi:lower} establishes the lower bounds in~\eqref{eq:main1} and~\eqref{eq:main2}, 
since \refT{thm:max-clique} guarantees that~$\delta \ge \min\{\eps,1/200\}= \Omega(1)$ always holds, in addition to the more precise estimate~$\delta = 1/2+o(1)$ when~${n^{-o(1)} \le p \ll 1}$. 
\end{proof}

For sufficiently small~$\eps>0$, the above proof gives the lower bound~$\delta \ge \eps$ in~\eqref{eq:chi:lower}. 
This is not a proof artifact, since for~$p=n^{-2/5+\eps}$ a linear dependence of~$\delta$ on $\eps$ is in fact needed, 
as shown by~\refL{lem:upper-bound-epsilon-1} in \refS{sec:upper}.
In the following proof of \refT{thm:main3} we take this observation one step further, 
and use the following refinement of \refT{thm:max-clique} (whose proof is given in \refS{sec:main:rem}) 
to determine the typical asymptotics of $\chi_c(\Gnp)$ for~$p=n^{-2/5+\eps}$, where we even allow for~$\eps=\eps(n) \to 0$ at some~rate. 
\begin{theorem}[Refinement of \refT{thm:max-clique} in very sparse case]\label{rem:max-clique}
Suppose that the assumptions of \refT{thm:main3} hold for ${p=n^{-2/5+\eps}}$, with~$\eps=\eps(n) > 0$. 
Then the conclusion of \refT{thm:max-clique} remains valid, with~$\min\{\eps,1/200\} \le \delta \le 1/2$ replaced by~${\delta = (1+o(1))5 \eps/2}$.  
\end{theorem}
\begin{proof}[Proof of Theorem~\ref{thm:main3}]
We start with the lower bound in~\eqref{eq:main3}, for which we pick~$\delta = \delta(n,p,\eps) \in (0,1)$ as in \refT{rem:max-clique}.
By proceeding  word-by-word as in the above proof of Theorem~\ref{thm:main1} and~\ref{thm:main2}, 
for this choice of~$\delta$ we then establish that whp the lower bound~\eqref{eq:chi:lower} holds. 
Since \refT{rem:max-clique} guarantees that~${\delta = (1+o(1))5 \eps/2}$,
this completes the proof of the lower bound in~\eqref{eq:main3}, by noting that for~$p=n^{-2/5+\eps}$ we have~$\eps \log(n) = \log(n^{2/5}p)$.

Finally, the upper bound in~\eqref{eq:main3} follows from \refL{lem:upper-bound-epsilon-1} in \refS{sec:upper}, since~$\eps \log(n) = \log(n^{2/5}p)$.  
\end{proof}

\section{Proof of main technical result: \refT{thm:max-clique}}\label{sec:main}
In this section we prove our main technical result \refT{thm:max-clique}, by proceeding in three steps (where $V=\{1, \ldots, n\}$ denotes the vertex set of~$\Gnp$, as before). 
First, in \refS{sec:deterministic} we use combinatorial arguments to show that any useful set~$W \subseteq V$ contains a large collection~$\cC$ of \emph{inclusion-maximal clique candidates (with respect to~$W$)}, 
i.e., sets~$K \subseteq W$ for which every vertex~$v \in V\setminus W$ has at least one non-neighbor in~$K$. 
Second, in \refS{sec:janson} we use probabilistic arguments to show that at least one of these clique candidates~$K \in \cC$ in fact is a clique in~$\Gnp$. 
Third, in \refS{sec:choices} we establish the desired form of~$\delta$ asserted by \refT{thm:max-clique}, and verify several other technical~inequalities needed for our arguments. 

We now turn to the technical details of our three-step approach, 
which requires us to introduce a number of different auxiliary variables. 
With foresight, we define $\phi(x) := (1+x)\log(1+x) - x$ as well as
\begin{equation}\label{eq:param}
x_i := \frac{\log(n)}{\phi(r_i - 1)p}, \qquad 
r_i := e^{\gr i}, \qquad 
\gr : = \frac{1}{\log^{4}(n)}, \qquad 
\text{and} \qquad
\rho := \log_n(1/p).
\end{equation}
These variables must satisfy a number of technical inequalities for our arguments to work, 
which requires different parameter choices for different ranges of edge-probabilities~$p=p(n)$. 
\refL{lem:choices} below records that we can always choose the parameters in a suitable way. 
We defer the technical proof of~\refL{lem:choices} to \refS{sec:choices}, 
which in the upcoming Sections~\ref{sec:deterministic} and~\ref{sec:janson} then 
allows us to focus on our main combinatorial and probabilistic line of reasoning. 
On a first reading, the details of the technical inequalities~\eqref{ass:Janson:k:bound}--\eqref{eq:m1:delta} below can safely be ignored. 
%
\begin{lemma}[Choice of parameters]\label{lem:choices}%
Suppose that the assumptions of \refT{thm:main1} hold. 
Set ${\portion := 1/10}$, ${\sigma := 1/100}$ and ${\alpha:=4/5}$. 
Then there is a choice (that depends on~$n,p,\eps$) of the integer-valued parameters $k,m \ge 1$ and the real-valued parameter $\delta \in (0,1)$ that satisfy, for all sufficiently large~$n$, 
the following inequalities: 
\begin{gather}
\label{ass:Janson:k:bound}
m +2 \le k \leq \log(n), \\ 
\label{ass:Janson-delta-bound}
\delta \le \min\biggcpar{\frac{1}{2} - \rho, \ \frac{k-1}{k}\Bigpar{1-\rho(k/2+1)}} -\frac{9 \log(\log(n))}{\log(n)} , \\
\label{ass:partition}
\min\Bigcpar{\ell_0, \: n^{1-\delta}p} \geq \max\Bigcpar{16m\bigpar{1+\log(n m)} , \ 8k^2, \ \log^4(n)},\\
\label{eq:density:1}
\min_{i \ge 1} \: \ceil{x_i}\Bigpar{\phi(r_i-1)p - \log^3(n)/\ell_0} \geq 1+\max\Bigcpar{\log\bigpar{n\log^2(n)e/\ell_0}, \ 0} ,\\
\label{ass:counting}
1-\Lambda \ge \portion ,\\
\label{ass:s}
s\ge m+1
\end{gather}
where~$\Lambda=\Lambda(n,p,k,m) \ge 0$ 
is defined as in~\eqref{eq:def:Lambda} of the proof of~\refL{lem:counting}, 
${s = s(n,p,\delta)}$ is defined as in~\eqref{def:s},
and~$\ell=\ell_1(\emptyset)$ is defined as in~\eqref{def:ell1}.  
Furthermore, 
the following is also true: if~$m \ge 2$, then 
\begin{gather}
\label{eq:p:large}
p \ge n^{-\sigma},\\
\label{eq:m2:delta}
\delta = 1/2 - 3\rho - \frac{9 \log(\log(n))}{\log(n)},\\
\label{eq:density:2}
(m+1) \Bigpar{\phi(\alpha/p-1)p - \log^3(n)/\ell_0} \geq 1+\max\Bigcpar{\log\bigpar{n\log^2(n)e/\ell_0}, \ 0} ,
\end{gather}
and if~$m=1$, then 
\begin{gather}
\label{eq:p:small}
p < n^{-\sigma},\\
\label{eq:m1:delta}
\delta = \min\{\eps, \: \sigma/2\} . 
\end{gather}%
\end{lemma}

As discussed, we henceforth fix the choices of the parameters~$k,m \ge 1$ and $\delta,\portion,\sigma,\alpha \in (0,1)$ as given by \refL{lem:choices}, 
which by~\eqref{def:s} also fixes the parameter~$s$. 
In the remainder of this section we can thus always tacitly assume all the properties and assertions of \refL{lem:choices} (whenever needed). 
Note that by~\eqref{ass:partition} we have $n^{1-\delta}p \geq \log^4(n) \gg \log^2(n)$, so the assumptions of~\refL{lem:non-nbr:lb} hold.
By~\eqref{ass:s} we also have~$s \ge m+1 \ge 2$.
Furthermore, inspecting the definitions~\eqref{eq:m2:delta} and~\eqref{eq:m1:delta} for the different ranges~\eqref{eq:p:large} and~\eqref{eq:p:small} of~$p=p(n)$, 
in view of~$\sigma = 1/100$ it is easy to check that $\min\{\eps, 1/200\} \le \delta \le 1/2$ for all sufficiently large~$n$. 
Note that for~$n^{-o(1)} \le p \ll 1$ we also have~$\rho=\log_n(1/p)=o(1)$, and thus~$\delta = 1/2+o(1)$ by~\eqref{eq:m2:delta}. 
Putting things together, so far we have verified all properties of~$\delta$ asserted by \refT{thm:max-clique}, including its assertions~(i) and~(ii). 

To complete the proof of \refT{thm:max-clique}, it therefore remains to establish its assertion~(iii), i.e., that whp every useful set~$W \subseteq V$ in $G_{n,p}$ contains a clique~$K$ so that every vertex $v \in V\setminus W$ has at least one non-neighbor in~$K$. 
The proof of assertion~(ii) is spread across the remainder of this section, and naturally splits into three parts: a combinatorial part in \refS{sec:deterministic}, a probabilistic part in \refS{sec:janson}, and a  technical part in \refS{sec:choices}.

\subsection{Combinatorial part: existence of many maximal clique candidates}\label{sec:deterministic}
The goal of this section is to prove that any useful set~$W \subseteq V$ contains a large collection~$\cC$ of inclusion-maximal clique candidates. 
Our combinatorial arguments will exploit some additional pseudorandom properties of~$\Gnp$, which are formalized in the following important auxiliary result, where~$\Gamma(v)=\Gamma_{\Gnp}(v)$ denotes the set of neighbors of vertex~$v$ in~$\Gnp$. 
The proof of \refL{lem:density} is based on standard Chernoff bounds, and thus deferred to~\refS{sec:lem:density} 
(as we shall see, the proof of property~\ref{lem:density:iii} is somewhat roundabout, but key for very small edge-probabilities). 
Recall that $V=\{1, \ldots, n\}$ denotes the vertex set of~$\Gnp$, 
that whp is an abbreviation for with high probability, 
and that $\ell_0 = \ell_1(\emptyset)$ is defined as in~\eqref{def:ell0}.
Furthermore, since we fixed the integer~$m \ge 1$ according to \refL{lem:choices}, 
in view of~\eqref{eq:p:large} and~\eqref{eq:p:small} the two cases~$m \ge 2$ and~$m=1$ below effectively two distinguish different ranges of~$p=p(n)$. 
\begin{lemma}
\label{lem:density}
A vertex set~$W \subseteq V$ is called \emph{nice}
if all vertex subsets~$U \subseteq W$ of size at least~$|U| \ge \ell_0/\log^2(n)$ satisfy the event~${\cD_{U,W}}$, 
which consists of the following three assertions: 
\vspace{-0.25em}{\begin{romenumerate}
	\parskip 0em  \partopsep=0pt \parsep 0em 
	\item\label{lem:density:i} For each integer~$i \ge 1$, there are at most $x_i$ many vertices~$v \in V \setminus W$ with $|\Gamma(v) \cap U| \ge r_i p |U|$. 
 \item\label{lem:density:ii} If~$m \ge 2$, then there are at most $m$ many vertices~$v \in V \setminus W$ 
 with $|\Gamma(v) \cap U| \ge \alpha|U|$. 
	\item\label{lem:density:iii} 
 If~$m=1$, then, for each integer~$1 \le j \le 9\log(n)$, there are at most~$j$ many vertices~$v \in V \setminus W$ with $|\Gamma(v) \cap U| \ge \bigpar{1/(j+1) + 1/\log^3(n)} \cdot |U|$.
\end{romenumerate}}\vspace{-0.25em}
Then whp all vertex subsets~$W \subseteq V$ of~$\Gnp$ are nice. 
\end{lemma}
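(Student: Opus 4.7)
The plan is to prove Lemma~\ref{lem:density} by combining standard Chernoff upper-tail bounds on~$|\Gamma(v)\cap U|$ with a union bound over the quantified pairs~$(U,W)$ and threshold indices. A useful first reduction is the observation that the property~$\cD_{U,W}$ only becomes easier as~$W$ grows, since enlarging~$W$ shrinks the set~$V\setminus W$ over which the count is taken; it therefore suffices to verify~$\cD_{U,U}$ for every~$U\subseteq V$ of size at least~$\ell_0/\log^2(n)$, which replaces the outer union bound over~$\le 3^n$ pairs by one over~$\le 2^n$ sets. After fixing such a~$U$, the random variables~$\bigset{|\Gamma(v)\cap U|}_{v\in V\setminus U}$ are mutually independent and each distributed as~$\Bin(|U|,p)$, so for each of properties~\ref{lem:density:i}--\ref{lem:density:iii} the count of offending vertices is stochastically dominated by a binomial random variable.

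For property~\ref{lem:density:i}, the standard Chernoff bound $\Pr\bigsqpar{|\Gamma(v)\cap U|\ge r_i p|U|}\le\exp\bigpar{-\phi(r_i-1)p|U|}$ upgrades to
\[
\Pr\bigsqpar{\text{\ref{lem:density:i} fails at }(U,i)} \: \le \: \binom{n}{\ceil{x_i}}\exp\bigpar{-\ceil{x_i}\phi(r_i-1)p|U|}.
\]
Taking logarithms, substituting the lower bound $|U|\ge\ell_0/\log^2(n)$, and absorbing the outer union-bound cost $\binom{n}{|U|}$ via $\log\binom{n}{|U|}\le|U|\log(en\log^2(n)/\ell_0)$ yields a negative exponent precisely matched to the hypothesis~\eqref{eq:density:1}; summing over the $O(\log^5(n))$ relevant values of~$i$ and over all admissible sizes of~$|U|$ then gives the desired $o(1)$ total. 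Property~\ref{lem:density:ii} follows by exactly the same blueprint, with~$r_i$ replaced by~$\alpha/p$, threshold~$\ceil{x_i}$ replaced by~$m+1$, and~\eqref{eq:density:1} replaced by~\eqref{eq:density:2}.

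The main obstacle --- and the reason the proof of~\ref{lem:density:iii} is \emph{somewhat roundabout} --- is that when~$m=1$ the threshold $\beta_j:=1/(j+1)+1/\log^3(n)$ is only marginally above the trivial value~$1/(j+1)$, and the statement must hold uniformly in $j\in\{1,\ldots,9\log(n)\}$, leaving essentially no slack in the borderline regime $p=n^{-2/5+o(1)}$ that forces~$m=1$. The route I would take is a codegree argument. Assuming for contradiction that vertices $v_1,\ldots,v_{j+1}\in V\setminus W$ each satisfy $|\Gamma(v_s)\cap U|\ge\beta_j|U|$, and writing $d(u):=\bigabs{\set{s:v_s\sim u}}$ for $u\in U$, one has $\sum_u d(u)\ge(j+1)\beta_j|U|$ and, by Cauchy--Schwarz, $\sum_u d(u)^2\ge((j+1)\beta_j)^2|U|$, whence
\[
\sum_{s<t}\bigabs{\Gamma(v_s)\cap\Gamma(v_t)\cap U} \: = \: \tfrac{1}{2}\Bigpar{\textstyle\sum_u d(u)^2-\sum_u d(u)} \: \ge \: \tfrac{(j+1)\beta_j|U|}{2}\bigpar{(j+1)\beta_j-1}.
\]
Using $(j+1)\beta_j-1\ge(j+1)/\log^3(n)$ forces at least one pair $\{v_s,v_t\}$ to have codegree in~$U$ of size at least $T_j\gtrsim|U|/(j\log^3(n))$. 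Since the~$d(u)$ are i.i.d.\ $\Bin(j+1,p)$ across $u\in U$, the sum $X:=\sum_u\binom{d(u)}{2}$ is an independent sum of bounded summands whose upper tail can be controlled by a binomial Chernoff bound (applied e.g.\ to the indicators~$\indic{d(u)\ge 2}$); thanks to the sparsity hypothesis $p\ll n^{-1/3}$ that makes~$T_j$ orders of magnitude larger than the mean $p^2|U|$, the resulting estimate is small enough to absorb a union bound over the $\binom{n}{j+1}$ tuples of vertices, all $U$ with $|U|\ge\ell_0/\log^2(n)$, and all $j\in\{1,\ldots,9\log(n)\}$. The most delicate step, and the source of the "roundabout" flavor, is the bookkeeping that verifies the codegree-Chernoff exponent genuinely beats the $|U|\log(en/|U|)$ contribution from the outer union bound in this borderline regime.
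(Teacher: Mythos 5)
Your reduction to $\cD_{U,U}$ and your treatment of properties \ref{lem:density:i} and \ref{lem:density:ii} coincide with the paper's proof (Chernoff for the binomially distributed edge counts between a candidate set of offending vertices and $U$, union bound over those vertex sets and over all $U$, with the exponent paid for by \eqref{eq:density:1}, \eqref{eq:density:2} and \eqref{ass:partition}). The gap is in property \ref{lem:density:iii}. Your Cauchy--Schwarz step is fine and does force some pair $\{v_s,v_t\}$ to have codegree inside $U$ of order at least $|U|/(j\log^3 n)\ge |U|/(9\log^4 n)$, but the probabilistic estimate you then propose cannot support a union bound over all $U$. For a fixed $U$ of size $u$, the codegree of a fixed pair inside $U$ is $\Bin(u,p^2)$ (and your sum $X=\sum_u\binom{d(u)}{2}$ behaves no better), so the best Chernoff exponent available at threshold $T\asymp u/\log^3(n)$ is of order $T\log\bigpar{T/(eu p^2)}\le \tfrac{u}{\log^3(n)}\cdot\log(1/p^2)=O\bigpar{u/\log^2(n)}$, since $\log(1/p^2)\le\tfrac{4}{5}\log n$. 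Meanwhile the outer union bound over $U$ costs $\log\binom{n}{u}\ge u\log(n/u)$, and in the very regime that forces $m=1$ (e.g.\ $p=n^{-2/5+o(1)}$, where $u$ may be as small as $\ell_0/\log^2(n)=n^{3/5-\delta+o(1)}$) this is $\Theta(u\log n)$; even for $u=\Theta(n)$ it is $\Omega(u)$. So the union-bound cost exceeds your exponent by a factor of order $\log^2(n)$ to $\log^3(n)$, and the "delicate bookkeeping" you defer cannot be made to work: the codegree-Chernoff exponent genuinely does not beat the entropy of choosing $U$.

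The missing idea (and the paper's actual route) is to remove $U$ from the probabilistic event altogether: one conditions on the single whp event $\cN$ that \emph{every} pair of vertices $v,w\in V$ satisfies $|\Gamma(v)\cap\Gamma(w)|\le 2np^2$, which needs only a union bound over the $O(n^2)$ pairs and holds because $np^2\ge n^{1/5}\gg\log n$ when $p\ge n^{-2/5}$. Given $\cN$, property \ref{lem:density:iii} follows \emph{deterministically} for every admissible $U$ and every $j\le 9\log(n)$: your own Cauchy--Schwarz computation (or the paper's inclusion--exclusion count) produces a pair whose codegree inside $U$ is at least $\Omega\bigpar{|U|/\log^4(n)}$, and since $|U|\ge\ell_0/\log^2(n)=\Omega\bigpar{n^{1-\delta}p/\log^3(n)}$ while $n^{\delta}p\ll\log^{-7}(n)$ by \eqref{eq:density:3}, this quantity exceeds $2np^2$, contradicting $\cN$. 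In other words, the codegree bound must be taken globally (where it costs no $U$-entropy) rather than restricted to $U$ (where it does); with that substitution your argument for \ref{lem:density:iii} becomes essentially the paper's.
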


Turning to the details, in the next two subsections we fix one useful and nice set~$W \subseteq V$, and deterministically show that~$W$ contains a large collection~$\cC$ of inclusion-maximal clique candidates (with respect to~$W$) of size~$k$,
i.e., sets~$K \subseteq W$ of size~$|K|=k$ for which every vertex~$v \in V\setminus W$ has at least one non-neighbor in~$K$ 
(where the integer~$k$ is chosen according to \refL{lem:choices}, as discussed).

\subsubsection{Constructing a pseudo-partition of~$W$ (probabilistic method)}\label{sec:step1} 
Intuitively, our first step is to construct a pseudo-partition of~$W$, see \refL{lem:partition} below.  
Let us motivate why this helps us  to find many inclusion-maximal clique candidates. 
Namely, one could perhaps hope to argue that a typical $k$-subset~$K \subseteq W$ is an inclusion-maximal clique candidate, but this turns out to be too optimistic: the problem is that the `high-degree' vertices of~${V \setminus W}$ are adjacent to most of $W$, in which case a typical $k$-subset $K \subseteq W$ is likely to be completely contained in the neighborhood of such vertices (and thus not an inclusion-maximal clique candidate).
To get around this major obstacle the idea is that, for each such `high-degree' vertex~$u_i$, we should try to have at least one vertex in our clique-candidate~$K$ that is 
a non-neighbor of~$u_i$. 
It turns out that by randomly partitioning most of $W$ into several disjoint parts, with certain parts corresponding to non-neighbors of a fixed  `high-degree' vertex~$u_i$, we are eventually  able to make this idea work and construct many inclusion-maximal clique candidates. 
The following lemma is the first step towards making this plan precise, 
where we slightly abuse notation in~\eqref{prop:A}--\eqref{prop:Bi} below, since the parameters~$a$ and~$b$ both depend~on~$|W|$. 
%
%
Recall that $\ell_1(W)$ is defined as in~\eqref{def:ell1}. 
\begin{lemma}\label{lem:partition}
There exists disjoint subsets $A,B_1, \ldots, B_m$ of $W$ with 
\begin{align}
\label{prop:A}
|A| &= {\bigl\lceil |W|/4 \bigr\rceil} =: a,\\
\label{prop:Bi}
|B_i| & = {\bigl\lceil\ell_1(W)/(4m)\bigr\rceil} =: b \qquad \text{for all~$i \in [m]$}
\end{align}
such that~$|\Gamma(u_i) \cap B_i|=0$ for all~$i \in [m]$, where $u_1, u_2, \ldots$ is an enumeration of all vertices in $V\setminus W$ in decreasing order according to their number of neighbors~$|\Gamma(u_j) \cap A|$ in~$A$ (ties broken in lexicographic order, say). 
\end{lemma}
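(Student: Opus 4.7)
The plan is to use the probabilistic method: I pick $A \subseteq W$ uniformly at random of size $a=\lceil|W|/4\rceil$, and then greedily construct $B_1,\ldots,B_m$ as non-neighborhoods of the $u_i$'s inside $W \setminus A$. The key idea is that hypergeometric concentration forces a typical random $A$ to leave about $3\ell_1(W)/4$ non-neighbors of each $v \in V \setminus W$ in $W \setminus A$, which leaves enough room for the greedy selection of $B_i$'s of size $b \approx \ell_1(W)/(4m)$.

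Concretely, I would first choose $A \subseteq W$ uniformly at random with $|A|=a$; this immediately determines the enumeration $u_1, u_2, \ldots$ of $V \setminus W$. For each fixed $v \in V \setminus W$, the usefulness of $W$ gives at least $\ell_1(W)$ non-neighbors of $v$ in $W$, so the quantity $|M_v|:=|(W \setminus A) \setminus \Gamma(v)|$ is a hypergeometric random variable with mean $\mu_v \ge (|W|-a)\ell_1(W)/|W| \ge \ell_1(W)/2$. A standard Chernoff-type bound then yields
\[
\pr\bigsqpar{\,|M_v| \le (3/8)\ell_1(W)\,} \le \exp\Bigpar{-\Omega\bigpar{\ell_1(W)}}.
\]
Since~\eqref{ass:partition} guarantees $\ell_1(W) \ge \ell_0 \ge \log^4(n)$, a union bound over $v \in V \setminus W$ shows that, with probability $1-o(1)$, every such $v$ satisfies $|M_v| \ge (3/8)\ell_1(W)$. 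Moreover, the bound $\ell_0 \ge 16m(1+\log(nm))$ from~\eqref{ass:partition} yields $\ell_1(W) \ge 8m$, so that $(3/8)\ell_1(W) \ge \ell_1(W)/4 + m \ge mb$.

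Fixing any such $A$ (which exists with positive probability), I would then construct the $B_i$'s greedily: at stage $i \in [m]$, pick any $b$-element subset $B_i \subseteq M_{u_i} \setminus \bigcup_{j<i}B_j$. This is feasible because $|M_{u_i}| \ge mb$ while $|\bigcup_{j<i}B_j| \le (m-1)b$, leaving at least $b$ allowable vertices. The hard part is recognizing that a purely deterministic argument gives only the crude bound $|M_v| \ge \ell_1(W) - a$, which can even be negative in the sparse regime when $|W|$ is close to $2np$ while $\ell_1(W)$ is of order $(1-\tau)n^{1-\delta}/s \ll |W|$; the averaging provided by a random $A$ is precisely what converts this degenerate worst-case bound into a typical bound of order $(3/4)\ell_1(W)$, after which the greedy construction goes through without further structural input about $W$.
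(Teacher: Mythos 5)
Your proof is correct, and it follows the same underlying idea as the paper -- the probabilistic method plus a Chernoff/union bound over all $v \in V\setminus W$, which is exactly what handles the circularity that the enumeration $u_1,u_2,\ldots$ is only determined after $A$ is chosen -- but the implementation differs. The paper randomizes the whole pseudo-partition at once: each vertex of $W$ goes to $A^+$ with probability $1/2$ and to each $B_i^+$ with probability $1/(2m)$, and the union bound (over all pairs $(i,v)$, using $\ell_0 \ge 16m(1+\log(nm))$ from~\eqref{ass:partition}) shows that with positive probability every $B_i^+\setminus\Gamma(v)$ has size at least $\ell_1(W)/(4m)$; the sets $B_i \subseteq B_i^+\setminus\Gamma(u_i)$ are then automatically disjoint because the $B_i^+$ are blocks of a partition. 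You randomize only $A$ (uniform of size $a$), use hypergeometric concentration to guarantee $|(W\setminus A)\setminus\Gamma(v)| \ge \tfrac{3}{8}\ell_1(W)$ for all $v$ simultaneously, and then extract the $B_i$ greedily from the non-neighborhoods $M_{u_i}$; disjointness is paid for by the counting estimate $\tfrac{3}{8}\ell_1(W) \ge \ell_1(W)/4 + m \ge mb$, which needs $\ell_1(W)\ge 8m$, again supplied by~\eqref{ass:partition}. Your variant uses less randomness and a weaker failure-probability requirement than you actually prove (positive probability would suffice, as in the paper), at the cost of the extra greedy bookkeeping; the paper's one-shot partition avoids that bookkeeping at the cost of randomizing $m+1$ parts. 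Both verifications of the numerical inequalities from~\eqref{ass:partition} are sound, so I have no corrections.
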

\begin{proof}%
We first randomly partition~$W$ into $A^+ \cup B_1^+ \cup \cdots \cup B_m^+$, 
by independently placing each vertex~$v \in W$ into~$A^+$ with probability~$1/2$,  and into~$B_i^+$ with probability~$1/(2m)$ for all $i \in [m]$.
Note that the marginal distribution of~$|A^+|$ equals the Binomial distribution~$\Bin\bigpar{|W|,1/2}$. 
Similarly, for each pair of~$i \in [m]$ and~$v \in V\setminus W$, the marginal distribution of~$|B^+_i \setminus \Gamma(v)|$ equals~$\Bin\bigpar{|W \setminus \Gamma(v)|,1/(2m)}$. 
Recall that since~$W$ is useful, 
by~\eqref{def:ell0} we have~$|W| \ge \ell_0$. 
By definition of useful (given below \refL{lem:non-nbr:lb}), we also have $|W \setminus \Gamma(v)| \ge \ell_1(W) \ge \ell_0$ for every $v \in V\setminus W$, 
as well as $|V \setminus W| \ge s-1 \ge m$ by assumption~\eqref{ass:s}. 
Using a standard union bound argument and Chernoff bounds (see~\cite[Theorem~2.1]{JLR}),
the probability that one of the random variables in the~collection 
\[
\bigcpar{|A^+|} \cup \Bigpar{\bigcup_{i \in [m]} \bigcup_{v \in V \setminus W} \bigcpar{|B^+_i \setminus \Gamma(v)|}}
\]
is less than half of its expected value is at~most
\begin{equation}\label{eq:partition:error1}
e^{-|W|/16} \: + \: \sum_{i \in [m]} \sum_{v \in V\setminus W} e^{-|W \setminus \Gamma(v)|/(16m)} \le e^{-\ell_0/16} + m \cdot n \cdot e^{-\ell_0/(16m)} \le 2 \cdot e^{-1}  < 1 ,
\end{equation}
where the second last inequality holds by assumption~\eqref{ass:partition}. 
Since the failure probability~\eqref{eq:partition:error1} is strictly less than one, 
by the probabilistic method there must exist a partition $A^+ \cup B_1^+ \cup \cdots \cup B_m^+$ of~$W$ with
\begin{align}
\label{prop:A:plus}
|A^+| &\ge |W|/4,\\
\label{prop:Bi:plus}
|B^+_i \setminus \Gamma(v)| &\ge |W \setminus \Gamma(v)|/(4m) \ge \ell_1(W)/(4m) \qquad \text{for all~$i \in [m]$ and $v \in V \setminus W$.}
\end{align}

To obtain the desired disjoint subsets $A,B_1, \ldots, B_m$ of~$W$, we now further refine the above-constructed partition $A^+,B^+_1 \ldots, B^+_m$ of~$W$ as follows. 
First, using~\eqref{prop:A:plus}  we pick an arbitrary subset~$A \subseteq A^+$ of size~$|A|=a$, 
which determines the enumeration~$u_1, u_2, \ldots$ of the vertices in~$V \setminus W$ appearing in the statement of \refL{lem:partition}. 
Afterwards, using~\eqref{prop:Bi:plus} and $|V \setminus W| \ge m$  (as established above) we then pick an arbitrary subset~$B_i \subseteq B_i^+ \setminus \Gamma(u_i)$ of size~$|B_i|=b$ for each~$i \in [m]$, completing the proof of \refL{lem:partition}. 
\end{proof}

\subsubsection{Counting maximal clique candidates in pseudo-partition of~$W$ (degree counting)}\label{sec:step3}   
Using the pseudo-partition $A,B_1, \ldots, B_m$ of~$W$ given by \refL{lem:partition}, our second step is to then show that there are many inclusion maximal clique candidates in 
\begin{equation}\label{eq:almost-cc}
\acc := \bigcpar{K \subseteq W \; : \; |K|=k, \: |K \cap A|=k-m \text{ and } |K \cap B_i|=1 \text{ for all~$i \in [m]$}} ,
\end{equation}
as formalized by \refL{lem:counting} below
(where we again slightly abuse notation, since~$\cc$ and~$\acc$ both depend on~$W$). 
Here our strategy is to count the number of~${K \in \acc}$ that are not inclusion-maximal clique candidates, i.e., for which~$K$ is contained in the neighborhood of some vertex in~$V \setminus W$.
The point is that we can estimate the number of such `bad'~${K \in \acc}$ via degree counting arguments, 
though the details are significantly more involved than in earlier work~\cite{AK2018,LMW2023}. 
Here one difficulty is that~$|W|$ can be much smaller than the typical degree~$np$, 
which means that a single vertex could potentially make all~$K \in \acc$ bad. 
We overcome this obstacle by combining combinatorial properties of $A,B_1, \ldots, B_m$ with pseudorandom properties from \refL{lem:density}, which allows us to deduce that all vertices in~$V \setminus (W \cup \{u_1, \ldots, u_m\})$ have at most~$\alpha |A|$ neighbors in~$A$, say, where~$u_1, \ldots, u_m$ are the $m$ highest-degree vertices given by the enumeration from \refL{lem:partition}.  
However, the major technical difficulty is that  natural degree counting arguments that exploit property~\ref{lem:density:i} of the event~$\cD_{U,W}$ (see \refL{lem:density} as well as~\eqref{eq:accX:0}--\eqref{eq:accX:1} and~\eqref{eq:Pi:alpha}--\eqref{eq:Pi:alpha:3} below) yield `too large' upper bounds on the number of bad~$K \in \acc$ for very small~$p=p(n)$. 
We overcome this major obstacle by developing a new degree counting argument in the sparse case~$m=1$,  based on property~\ref{lem:density:iii} of the event~$\cD_{A,W}$ (see~\eqref{eq:accX:10}--\eqref{eq:accX:11} below). 
Recall that the constant~${\portion = 1/10}$ is given by \refL{lem:choices}. 
Note that~${|\acc| = \binom{a}{k-m}b^m}$, where~$a$ and~$b$ are defined as in~\eqref{prop:A}--\eqref{prop:Bi}. 
\begin{lemma}\label{lem:counting}
There exists~$\cc \subseteq \acc$ containing at least~$|\cc| \geq \portion|\acc|$ many inclusion-maximal clique candidates with respect to~$W$. 
\end{lemma}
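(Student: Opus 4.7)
My plan is to prove the upper bound $|\acc \setminus \cC| \leq (1-\portion)|\acc|$ on the number of ``bad'' $K \in \acc$, by which I mean those $K \in \acc$ that are contained in the neighborhood $\Gamma(v)$ of some $v \in V \setminus W$ (and therefore fail to be inclusion-maximal clique candidates with respect to $W$). The natural union bound over $v \in V \setminus W$ gives
\begin{equation*}
|\acc \setminus \cC| \: \leq \: \sum_{v \in V \setminus W} \binom{|\Gamma(v) \cap A|}{k-m} \prod_{i=1}^{m} |\Gamma(v) \cap B_i| ,
\end{equation*}
and the first key observation is that for each $v = u_i$ with $i \in [m]$ the summand vanishes: by \refL{lem:partition} we have $\Gamma(u_i) \cap B_i = \emptyset$, while $|K \cap B_i| = 1$ for every $K \in \acc$. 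So it suffices to bound the contribution from $v \in V \setminus (W \cup L)$ with $L := \{u_1, \ldots, u_m\}$.

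For those remaining vertices I would use the crude bound $\prod_i |\Gamma(v) \cap B_i| \leq b^m$ and factor out a matching $b^m$ from $|\acc| = \binom{a}{k-m} b^m$, reducing the claim to an inequality of the form
\begin{equation*}
\Lambda \: := \: \sum_{v \in V \setminus (W \cup L)} \binom{|\Gamma(v) \cap A|}{k-m} \Big/ \binom{a}{k-m} \: \leq \: 1 - \portion ,
\end{equation*}
which is exactly assumption~\eqref{ass:counting}. When $m \geq 2$, property~\ref{lem:density:ii} of the nice set $W$ applied to $U = A$ shows that every $v \notin L$ satisfies $|\Gamma(v) \cap A| < \alpha a$ (the $m$ vertices with the largest $A$-degree are collected into $L$ by the enumeration of \refL{lem:partition}); a dyadic level-set decomposition via property~\ref{lem:density:i} with thresholds $r_i p a$ then produces a geometric-type sum that closes using the parameter inequality~\eqref{eq:density:2}.

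The main obstacle, and the novel ingredient needed for very sparse $p$, is the case $m = 1$: here the dyadic level-set sum from property~\ref{lem:density:i} alone is too weak, because a single high-degree vertex outside $L$ can already be adjacent to a constant fraction of $A$. The plan is to exploit property~\ref{lem:density:iii}: after ordering the vertices of $V \setminus W$ by decreasing $|\Gamma(v) \cap A|$, the $t$-th largest (for $t \geq 2$) satisfies $|\Gamma(v) \cap A| < \bigpar{1/t + 1/\log^3(n)} a$, so its contribution to $\Lambda$ is on the order of $\binom{a/t}{k-1}/\binom{a}{k-1} \approx t^{-(k-1)}$. Summing over $t \geq 2$ yields a convergent series since $k \geq m+2 = 3$ by~\eqref{ass:Janson:k:bound}, and choosing $k$ sufficiently large makes its total strictly less than $1-\portion$. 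For the tail $t > 9\log(n)$ I would fall back on property~\ref{lem:density:i}, noting that the degree at that point is already small, and use~\eqref{eq:density:1} to absorb the remaining, much smaller, contribution. The bulk of the technical work is then to package these two regimes into a single explicit expression $\Lambda$ matching~\eqref{eq:def:Lambda} in a form compatible with the parameter choices of \refL{lem:choices}.
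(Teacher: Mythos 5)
Your reduction has a genuine gap: by bounding $\prod_{i\in[m]}|\Gamma(v)\cap B_i|\le b^m$ for \emph{every} vertex outside $L$, you throw away the $B_i$-degree information exactly where it is indispensable, and the resulting inequality $\sum_{v\in V\setminus(W\cup L)}\binom{\deg_A(v)}{k-m}/\binom{a}{k-m}\le 1-\portion$ is false (and is not assumption~\eqref{ass:counting}, which controls a different quantity). A typical vertex $v\in V\setminus W$ has $\deg_A(v)\approx pa\gg k$, so it contributes roughly $p^{k-m}$ to your sum, and there are $\Theta(n)$ such vertices; near $p=n^{-2/5}$ with the forced choice $m=1$, $k=3$ this gives about $np^{2}=n^{1/5}\to\infty$, and in the dense case $m\ge2$ one gets roughly $n^{1-(k-m)\rho}\approx n^{2/3}$. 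Property~\ref{lem:density:i} cannot rescue this, since it only bounds the \emph{number} of vertices whose $A$-degree exceeds $r_1p|A|$ and says nothing about the up to $n$ vertices below that threshold. This is precisely why the paper splits $V\setminus(W\cup L)$ further into $X$, $Y$, $Z$: for $Z$ (small degree in every part) the $B_i$ factors are kept, yielding the extra factor $(r_1p)^m$ and hence the harmless term $n(r_1p)^{k}=o(1)$ in~\eqref{eq:def:Lambda}, while for $Y$ (small $A$-degree but some large $B_i$-degree) the niceness property applied to $U=B_i$ bounds the number of such vertices by $mx_1$, giving $mx_1(r_1p)^{k-m}$. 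Without some such use of the $B_i$'s for the low-$A$-degree vertices, the union bound cannot close.

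Your treatment of the high-$A$-degree vertices does match the paper: $L$ is neutralized via $\Gamma(u_i)\cap B_i=\emptyset$, the case $m\ge2$ uses property~\ref{lem:density:ii} plus a level-set sum, and the case $m=1$ uses property~\ref{lem:density:iii} through the rank bound $\deg_A(u_t)\le\bigpar{1/t+1/\log^3(n)}|A|$. Two smaller corrections there. First, "choosing $k$ sufficiently large" is not available: near $p=n^{-2/5}$ the Janson step and~\eqref{ass:Janson-delta-bound} pin $k=3$; fortunately $\sum_{t\ge2}t^{-(k-m)}\le\pi^2/6-1<0.7$ already suffices at $k-m=2$ once the remaining contributions are $o(1)$, which is how the paper closes the case $m=1$. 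Second, the level-set sum $\Pi_\alpha$ is bounded in the proof of \refL{lem:choices} (the verification of~\eqref{ass:counting}), not by~\eqref{eq:density:2}, which instead feeds the Chernoff estimate behind property~\ref{lem:density:ii}; that is only an attribution slip.
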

\begin{proof}
Writing~$\deg_S(v) := |\Gamma(v) \cap S|$ for the number of neighbors of~$v$ in a set~$S$, we partition~$V \setminus W$ into 
\begin{equation*}
V\setminus W= L \cup X \cup Y \cup Z ,
\end{equation*}
where~$L:=\{u_1, \ldots, u_m\} \subseteq V \setminus W$ contains the $m$ highest-degree vertices from \refL{lem:partition}, and
\begin{equation*}
\begin{split}
X :=& \bigcpar{v \in V\setminus (W \cup L) \: : \:   \deg_A(v) \ge r_1p|A| },\\
Y :=& \bigcpar{v \in V\setminus (W \cup L) \: : \: \deg_A(v) < r_1p|A| \text{ and } \deg_{B_i}(v) \geq r_1p|B_i| \text{ for some }i \in [m]},\\
Z :=& \bigcpar{v \in V\setminus (W \cup L) \: : \: \deg_A(v) < r_1p|A| \text{ and } \deg_{B_i}(v) < r_1p|B_i| \text{ for all }i \in [m]}.
\end{split}
\end{equation*}
 We define $\acc_X$, $\acc_Y$, and $\acc_Z$ as the sets~$K \in \acc$ that are contained in the neighborhood of some vertex in $X$, $Y$, and $Z$, respectively. 
 Observe that, by \refL{lem:partition} and the definition~\eqref{eq:almost-cc} of~$\acc$, every set $K \in \acc$ contains at least one non-neighbor of every vertex~$u_j \in L$.
 Hence all sets in
\begin{equation}\label{eq:PXYZ:cc}
 \cc := \acc \setminus (\acc_X \cup \acc_Y \cup \acc_Z)
\end{equation}
are inclusion-maximal clique candidates with respect to~$W$. 
To obtain the desired lower bound on~$|\cc|$, 
it thus suffices to derive suitable upper bounds on 
$|\acc_X|$, $|\acc_Y|$ and $|\acc_Z|$. 

We start with an upper bound on~$|\acc_Y|$.  Note that, using $\ell_1(W) \ge \ell_1(\emptyset)=\ell_0$ and~\eqref{ass:Janson:k:bound}, we have $|B_i| = \ceil{\ell_1(W)/(4m)} \ge {\ell_1(\emptyset)/(4m)} \gg {\ell_0/\log^2(n)}$ for every~$i \in [m]$. 
We now exploit that every vertex~$v \in Y$ satisfies $\deg_{B_i}(v) \ge r_1 p |B_i|$ for some~$i \in [m]$. 
Indeed, since $W$ is nice,
by property~\ref{lem:density:i} of the event~$\cD_{B_i,W}$ (see \refL{lem:density}) 
we infer that~$|Y| \leq m \cdot x_1$. 
For every vertex~$v \in Y$, observe that there are $\binom{\deg_A(v)}{k-m}$ many subsets of~$A$ of size~${k-m}$ in the neighborhood of~$v$. 
By exploiting that every vertex~$v \in Y$ satisfies $\deg_A(v) < r_1 p |A|$, it follows~that
\begin{align}\label{eq:accY}
\frac{|\acc_Y|}{|\acc|}& \leq \sum_{v \in Y}
\underbrace{\frac{\binom{\floor{r_1p|A|}}{k-m}}{\binom{|A|}{k-m}}}_{\le (r_1p)^{k-m}} \cdot \underbrace{\prod_{i \in [m]} \frac{|B_i|}{|B_i|}}_{=1}\leq |Y| \cdot (r_1 p)^{k-m} \le mx_1 \cdot (r_1 p)^{k-m} .
\end{align}

Next we estimate~$|\acc_Z|$ from above.  By definition every vertex in~$Z$ is adjacent to at most an~$r_1p$ fraction of each of $A,B_1,\ldots,B_m$. 
By proceeding similarly to the estimate~\eqref{eq:accY} for~$|\acc_Y|$, 
using~$|Z| \le n$ it follows~that  
\begin{align}\label{eq:accZ}
\frac{|\acc_Z|}{|\acc|}
\le \sum_{v \in Z}
\underbrace{\frac{\binom{\floor{r_1p|A|}}{k-m}}{\binom{|A|}{k-m}}}_{\le (r_1p)^{k-m}} \underbrace{\prod_{i \in [m]} \frac{r_1 p |B_i|}{|B_i|}}_{=(r_1 p)^m} 
\le |Z|\cdot (r_1 p)^k \leq n \cdot (r_1 p)^k .
\end{align}

Finally we estimate~$|\acc_X|$, which is the crux of the matter. 
For any real-valued parameter~$\beta \in (0,1)$, we~define
\begin{equation*}
X_\beta := \{v \in X~:~\deg_A(v) \leq \beta|A|\}.
\end{equation*}
Recall that $|A| = \ceil{|W|/4} \geq \ell_1(\emptyset)/4 \gg \ell_0/\log^2(n)$. 
We now exploit that $L =\{u_1, \ldots, u_m\} \subseteq V \setminus W$ contains the $m$ highest-degree vertices from the enumeration in the statement of \refL{lem:partition}, using a case distinction. 

\vskip2mm

\textbf{Case $m \ge 2$: }  
Since $W$ is nice,
by property~\ref{lem:density:ii} of the event~$\cD_{A,W}$ (see \refL{lem:density}) 
we infer that all vertices $v \in X \subseteq V\setminus (W \cup L)$ must satisfy~$\deg_A(v) < \alpha |A|$. 
This degree bound establishes that~$X = X_\alpha$, so by proceeding similarly to~\eqref{eq:accY} and~\eqref{eq:accZ} it follows that 
\begin{align}\label{eq:accX:0}
\frac{|\acc_X|}{|\acc|} \leq \sum_{v \in X_{\alpha}} \frac{\binom{\deg_A(v)}{k-m}}{\binom{|A|}{k-m}}.
\end{align}
For each integer~$i \ge 1$, let $z_i$ denote the number of vertices $v \in X_\alpha$ with $\deg_A(v) \ge r_ip|A|$. 
Since~$W$ is useful, by property~\ref{lem:density:i} of the event~$\cD_{A,W}$ we know that~$z_i \le x_i$ for all~$i \geq 1$. 
Exploiting that all vertices~$v \in X_\alpha$ satisfy~$r_1 p |A| \le \deg_A(v) \le \alpha|A|$, by splitting the degrees into level-sets (also called dyadic decomposition) 
we conclude that 
\begin{equation}\label{eq:accX:1}
\begin{split}
\frac{|\acc_X|}{|\acc|} & \leq \sum_{i \ge 2: p r_{i-1} \le \alpha} (z_{i-1} - z_i) \cdot 
\underbrace{\frac{\binom{\floor{\min\{r_ip,\alpha\}|A|}}{k-m}}{\binom{|A|}{k-m}}}_{\le (r_ip)^{k-m}}\\ 
& \le z_1(r_2p)^{k-m} + \sum_{i \ge 3: r_{i-1}p  \le \alpha} z_{i-1} \cdot \Bigsqpar{(r_{i}p)^{k-m} - (r_{i-1}p)^{k-m}}\\
& \le x_1(r_2p)^{k-m} + \underbrace{\sum_{i \ge 2: r_{i} p  \le \alpha} x_i\Bigsqpar{(r_{i+1}p)^{k-m} - (r_{i}p)^{k-m}}}_{=: \Pi_{\alpha}}. 
\end{split}
\end{equation}
(Although not directly relevant here, 
we remark that~$\Pi_\alpha$ is intuitively dominated by the last term of its sum: 
the rigorous analysis and simplification appears in~\eqref{eq:Pi:alpha}--\eqref{eq:Pi:alpha:3} of \refS{sec:Lambda}, 
as part of the deferred proof of inequality~\eqref{ass:counting} from \refL{lem:choices}.)

\vskip2mm

\textbf{Case $m =1$: } 
It turns out that the previous bound~\eqref{eq:accX:1} on~$|\acc_X|/|\acc|$ degenerates for very small~$p=p(n)$, i.e., is no longer less than one (see estimates~\eqref{eq:Pi:alpha:3}--\eqref{eq:Pi:alpha:second} in \refS{sec:Lambda}, where~$\Pi_\alpha < 1 $ only holds when~$\rho=\log_n(1/p)$ is sufficiently small). 
To overcome this technical bottleneck, we thus had to develop another bound on~$|\acc_X|/|\acc|$, 
which exploits property~\ref{lem:density:iii} of~$\cD_{A,W}$ as well as $X\subseteq V\setminus (W \cup L)$ and~${L=\{u_1\}}$.
Turning to the details, by first decomposing~$X$ into~$X_{1/\log(n)}$ and~$X \setminus X_{1/\log(n)}$ 
and then proceeding similarly to~\eqref{eq:accX:0}--\eqref{eq:accX:1}, we obtain~that 
\begin{equation}\label{eq:accX:10}
\begin{split}
\frac{|\acc_X|}{|\acc|} & \leq \sum_{v \in X_{1/\log(n)}} \frac{\binom{\deg_A(v)}{k-m}}{\binom{|A|}{k-m}} + \sum_{v \in X \setminus X_{1/\log(n)}} \underbrace{\frac{\binom{\deg_A(v)}{k-m}}{\binom{|A|}{k-m}}}_{\le \bigpar{\deg_A(v)/|A|}^{k-m}}  \\
& 
\le x_1(r_2p)^{k-m} + \Pi_{1/\log(n)} + \sum_{v \in X \setminus X_{1/\log(n)}} \biggpar{\frac{\deg_A(v)}{|A|}}^{k-m},
\end{split}
\end{equation}
where the parameter~$\Pi_{1/\log(n)}$ is defined analogously to~$\Pi_{\alpha}$  in~\eqref{eq:accX:1}, but with~$r_i p \le \alpha$ replaced by~$r_i p \le 1/\log(n)$. 
Let $u_1, u_2, \ldots$ be the enumeration of all vertices in $V\setminus W$ as in \refL{lem:partition}, i.e., in decreasing order according to~$\deg_A(u_j)$.
By property~\ref{lem:density:iii} of the event~$\cD_{A,W}$, for any~$1 \le j \le 9\log(n)$ we know that at most $j$ many  vertices $v \in V \setminus W$ satisfy $\deg_A(v)/|A| \ge 1/(j+1)+ 1/\log^3(n)$.
The crux is that this implies 
\[
\frac{\deg_A(u_j)}{|A|} \le \frac{1}{j} + \frac{1}{\log^3( n)} \qquad \text{ for all~$u_j \in V \setminus W$ with $2 \le j \le 9 \log(n)$}.
\]
Since ${L=\{u_1\}}$ and~$X \subseteq V \setminus (W \cup L)$, we also infer that~$X \setminus X_{1/\log(n)} \subseteq \bigcpar{ u_j \in V \setminus W: \: 2 \le j \le 9 \log(n)}$, say. 
As assumption~\eqref{ass:Janson:k:bound} implies~$k \le \log(n)$ and~$k-m \ge 2$, it therefore follows that    
\begin{equation}\label{eq:accX:11}
\begin{split}
\sum_{v \in X \setminus X_{1/\log(n)}} \biggpar{\frac{\deg_A(v)}{|A|}}^{k-m}
& 
   \le \sum_{2 \le j \le 9 \log(n)} \lrpar{\frac{1}{j} + \frac{1}{\log^3( n)}}^{k-m} \\
&   = \sum_{2 \le j \le 9\log(n)} \lrpar{\frac{1}{j}}^{k-m} \lrpar{1 + \frac{j}{\log^3( n)}}^{k-m} \\
& \le (1+o(1)) \cdot \sum_{j \ge 2} \frac{1}{j^2} = \frac{\pi^2}{6}-1 +o(1) < 0.7. 
\end{split}
\end{equation}

\vskip2mm

To sum up: by combining the above estimates~\eqref{eq:accY}, \eqref{eq:accZ} for~$|\acc_Y|$ and~$|\acc_Z|$
with the two different estimates~\eqref{eq:accX:1} and~\eqref{eq:accX:10}--\eqref{eq:accX:11} for~$|\acc_X|$, using~$r_1 \le r_2$ it follows that 
\begin{equation}\label{eq:def:Lambda}
\frac{|\acc_X|  + |\acc_Y| + |\acc_Z|}{|\acc|} \le (m+1) x_1(r_2 p)^{k-m}  + n (r_1 p)^k + \indic{m \ge 2} \Pi_{\alpha} + \indic{m=1}\Bigsqpar{\Pi_{1/\log(n)} + 0.7} = : \Lambda,
\end{equation}
which by definition~\eqref{eq:PXYZ:cc} of~$\cc$ and 
assumption~\eqref{ass:counting} establishes the desired lower bound~$|\cc|/|\acc| \ge 1-\Lambda \ge \portion$, 
completing the proof of \refL{lem:counting}. 
(Although not directly relevant for this proof, 
we remark that~$\Lambda$ is further estimated in \refS{sec:Lambda}, as part of the deferred proof of inequality~\eqref{ass:counting} from \refL{lem:choices}:
it turns out that the first two terms of~$\Lambda$ are~$o(1)$, 
and that $\Pi_\alpha$ and $\Pi_{1/\log(n)}$ are small enough to ensure that $1-\Lambda \ge 1/10=\portion$.) 
\end{proof}

\subsection{Probabilistic part: existence of a maximal clique (Janson's inequality)}\label{sec:janson} 
In this section we use probabilistic arguments to prove assertion~(ii) of \refT{thm:max-clique}, i.e., that whp every useful set~$W \subseteq V$ in $G_{n,p}$ contains a clique~$K$ so that every vertex $v \in V\setminus W$ has at least one non-neighbor in~$K$ 
(as discussed in \refS{sec:mainproof}, the crux is that each such~$K$ is contained in some inclusion maximal clique~$K^+ \subseteq W$). 
The starting point here is \refL{lem:density}, which guarantees that whp every useful set is also nice. 
This allows us to bring \refL{lem:counting} into play, which guarantees that any useful and nice set~$W$ contains a large collection~$\cc$ of inclusion-maximal clique candidates~$K$ of size~$k$, 
i.e., sets~$K \subseteq W$ of size~$|K|=k$ for which every vertex~$v \in V\setminus W$ has at least one non-neighbor in~$K$. 
To establish assertion~(ii), we then use Janson's inequality~\cite{SJ1990,RW2015,JW2016} 
to prove that typically at least one of these clique candidates~$K \in \cc$ is in fact a clique in~$\Gnp$, as formalized by the following~lemma.
Recall that whp is an abbreviation for with high probability, and that $V=\{1, \ldots, n\}$ denotes the vertex set of~$\Gnp$.  
\begin{lemma}\label{lem:Janson}
Whp every useful set~$W \subseteq V$ in $G_{n,p}$ satisfies~$Y_W \ge 1$, 
where the random variable~$Y_W$ denotes the number of cliques~$K$ in~$W$ so that every vertex $v \in V \setminus W$ has at least one non-neighbor in~$K$. 
\end{lemma}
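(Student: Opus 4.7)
The plan is to combine Janson's inequality applied within each candidate $W$ with a union bound over all $W \subseteq V$, where the key structural input is the large collection~$\cc$ from \refL{lem:counting}.

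\textbf{Step 1 (preliminary reduction).} By \refL{lem:density}, whp every vertex subset $W \subseteq V$ of~$\Gnp$ is nice; after intersecting with this whp event it suffices to prove that whp every useful \emph{and} nice set~$W$ satisfies $Y_W \ge 1$. Fix a candidate $W \subseteq V$. I would reveal all edges of~$\Gnp$ incident to $V \setminus W$ (i.e.\ edges inside $V \setminus W$ together with edges in the cut $E(W, V\setminus W)$). Both properties ``useful'' and ``nice'' are then determined, as is the partition $A,B_1,\ldots,B_m$ produced by \refL{lem:partition} and hence the collection $\cc \subseteq \acc$ from \refL{lem:counting}. On the event that $W$ is useful and nice we have $|\cc| \ge \portion |\acc| = \portion \binom{a}{k-m} b^m$. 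Crucially, the edges of $\Gnp$ inside $W$ remain independent $\Bin(1,p)$, and for every $K \in \cc$ the ``every $v \in V\setminus W$ has a non-neighbor in $K$'' condition is already satisfied by construction. Therefore $Y_W \ge |\{K \in \cc : K \text{ is a clique in } G[W]\}|$, and it suffices to show that, conditionally, the right-hand side is at least one with sufficiently high probability.

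\textbf{Step 2 (Janson's inequality).} Let $B_K$ be the event that $K$ is a clique in $G[W]$, so each $B_K$ depends on $\binom{k}{2}$ independent Bernoulli$(p)$ edges inside~$W$. Janson's inequality gives
\[
\Pr\bigsqpar{Y_W=0 \;\big|\; \text{revealed edges}} \;\le\; \exp\lrpar{-\frac{\mu^2}{\mu+\Delta}},
\]
with $\mu = |\cc|\, p^{\binom{k}{2}} \ge \portion \binom{a}{k-m} b^m p^{\binom{k}{2}}$ and
\[
\Delta \;=\; \sum_{\substack{K,K' \in \cc \\ |K\cap K'|\ge 2}} p^{\,2\binom{k}{2} - \binom{|K\cap K'|}{2}}.
\]
To estimate $\Delta$ I would split pairs according to $j=|K \cap K'| \in \{2,\dots,k-1\}$ and use the rigid structure of $\acc$ (each $K$ meets $A$ in $k-m$ vertices and each $B_i$ in exactly one vertex) to count, for each $K$, the number of $K' \in \acc$ with $|K\cap K'|=j$.

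\textbf{Step 3 (union bound).} Since the number of candidate sets $W \subseteq V$ is at most $2^n$, combining Steps~1--2 gives
\[
\Pr\bigsqpar{\exists \text{ useful and nice } W : Y_W = 0} \;\le\; \sum_{W \subseteq V} \exp\lrpar{-\frac{\mu^2}{\mu+\Delta}},
\]
so the proof reduces to showing that $\mu^2/(\mu+\Delta) \gg n$ uniformly in $W$. Writing this out, one needs both $\mu \gg n$ (``expected number large enough to overcome the union bound'') and $\Delta \le \mu \cdot \mu/(\omega n)$ for some $\omega \to \infty$ (``clustering not too severe''). The lower bound $\mu \gg n$ follows from $a \gtrsim |W|/4$, $b \gtrsim \ell_0/(4m)$, and the lower bounds on $\ell_0$ and $p$ in~\eqref{ass:partition}, combined with the inequality~\eqref{eq:density:1} (and~\eqref{eq:density:2} when $m\ge 2$), which is precisely calibrated so that $\binom{a}{k-m} b^m p^{\binom{k}{2}}$ dominates $n\log^2 n /\ell_0$ to the appropriate power. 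The upper bound on $\Delta$ uses~\eqref{ass:Janson-delta-bound} and~\eqref{ass:Janson:k:bound} to show that the contribution from each overlap size $j$ is geometrically dominated by the $j=0$ term.

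\textbf{Main obstacle.} The technical heart of the argument is the $\Delta$-estimate: showing that the pair-intersection sum is sufficiently smaller than $\mu^2/n$. This is where the restrictive choices of $k$, $m$ and $\delta$ in \refL{lem:choices} (especially \eqref{ass:Janson-delta-bound}) do their work, since for $p$ close to the threshold $n^{-2/5}$ there is very little slack between $\mu^2$ and $n\Delta$. In particular, the term-by-term comparison across overlap sizes forces the precise tradeoff encoded in~\eqref{ass:Janson-delta-bound}, and the separation of cases $m\ge 2$ versus $m=1$ in \refL{lem:counting} is mirrored here by the different regimes~\eqref{eq:m2:delta} and~\eqref{eq:m1:delta} in which the Janson bound is verified.
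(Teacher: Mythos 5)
Your Steps 1--2 follow the paper's route essentially verbatim: condition on the edges outside $\binom{W}{2}$ (the paper's $\Xi_W$), note that this determines ``useful'', ``nice'' and the collection $\cc$ from \refL{lem:counting}, and apply Janson's inequality to the number of $K \in \cc$ that become cliques inside $W$, with the overlap sum controlled via the rigid $(A,B_1,\ldots,B_m)$-structure of $\acc$ (the paper's $\Delta_{x,y}$ decomposition). The problem is Step 3. You union bound over all $2^n$ candidate sets and therefore demand $\mu^2/(\mu+\Delta) \gg n$ \emph{uniformly} in $W$. This is unattainable: useful sets can be as small as $|W| \approx \ell_0 \approx n^{1-\delta}p/\log n$, and since the exponent is trivially at most $\mu \le \binom{|W|}{k}p^{\binom{k}{2}}$, it cannot reach order $n$ for such $W$ in the sparse regime. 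Concretely, for $p=n^{-2/5+\eps}$ with a small fixed $\eps$ one has $m=1$, $k=3$, $\delta \le \eps$, so $|W|$ may be of order $n^{3/5}$ (up to logs), giving
\begin{equation*}
\mu \;\le\; \binom{|W|}{3}p^{3} \;\approx\; n^{9/5}\cdot n^{-6/5+3\eps} \;=\; n^{3/5+3\eps} \;\ll\; n ,
\end{equation*}
so no choice of $k,m,\delta$ can make your reduction go through; the claim that \eqref{eq:density:1}--\eqref{eq:density:2} are ``calibrated'' to give $\mu \gg n$ is not correct (those inequalities serve \refL{lem:density}, not the Janson exponent).

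The repair is the paper's size-stratified union bound: a candidate $W$ must satisfy $|W|\ge \ell_1(W)\ge 1$, and the number of sets of size $w$ is at most $n^{w}$, so it suffices to prove the exponent is $\gg |W|\log n$ for every useful and nice $W$, which yields a per-set failure probability $\le n^{-2|W|}$ and hence $\sum_{w\ge 1} n^{w}\cdot n^{-2w}=o(1)$. Establishing the target $\gg |W|\log n$ (rather than $\gg n$) is exactly where the lower bounds $\ell_1(W) \ge \tfrac12 n^{1-\delta}p/\log n$ and, crucially, the ratio bound $\ell_1(W)/|W| \ge \tfrac18 n^{-\delta}/\log n$ enter (the paper's \eqref{eq:l1W:lower}--\eqref{eq:l1W:ratio}): they let the exponent scale linearly with $|W|$ through $b \approx \ell_1(W)/(4m)$, and it is this scaled inequality that the constraint \eqref{ass:Janson-delta-bound} on $\delta$ is designed to satisfy, via the two estimates \eqref{eq:Janson:exponent:1} and \eqref{eq:Janson:exponent:2} corresponding to the minimum over overlap sizes $r=2$ and $r=k$. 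Without this stratification your argument, as written, fails for every small useful $W$, which is precisely the bottleneck regime of the whole paper.
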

\begin{proof}
We start with several auxiliary estimates, that all hold for all sufficiently large~$n$.  
Recall that by~\eqref{def:ell0}, any useful set~$W \subseteq V$ must contain at least~$|W| \ge \ell_1(W)$ many vertices. 
Since~$s \le \log (n)/p$ as in the proof of~\refL{lem:non-nbr:lb} (see \refS{sec:lem:non-nbr:lb}), in view of the definition~\eqref{def:ell1} of~$\ell_1(W)$ and $1-\tau=1-o(1)$ it follows that 
\begin{equation}\label{eq:l1W:lower}
\begin{split}
\ell_1(W) \ge \ell_1(\emptyset) \ge (1-\tau) n^{1-\delta}/s \ge \tfrac{1}{2} n^{1-\delta}p \log^{-1}(n),
\end{split}
\end{equation}
and, using a basic case distinction, it also follows that
\begin{equation}\label{eq:l1W:ratio}
\begin{split}
\frac{\ell_1(W)} {|W|}
& \ge \indic{|W| \le 4np}\frac{(1-\tau)n^{1-\delta}/s}{|W|} + \indic{|W| \ge 4 np} \frac{|W| - 2np}{|W|} \\
& \ge \indic{|W| \le 4np}\frac{(1-\tau)n^{-\delta}}{4p s} + \indic{|W| \ge 4 np}(1-1/2) \ge \tfrac{1}{8} n^{-\delta}\log^{-1}(n).
\end{split}
\end{equation}

With an eye on the upcoming union bound argument over all useful and nice vertex sets~$W$, 
we first fix an arbitrary set~$W \subseteq V$ containing at least~$|W| \ge \ell_1(W)$ many vertices 
(to clarify: here we do not yet assume that~$W$ is useful and nice). 
For technical reasons, in the following we will often 
 (sometimes tacitly) condition on the random~variable
\begin{equation*}
\Xi_W \: := \: \bigpar{\indic{uv\in E(\Gnp)}}_{uv \in \binom{V}{2} \setminus \binom{W}{2}}, 
\end{equation*}
which encodes the edge-status of all vertex pairs of~$\Gnp$ except for those inside~$W$. 
Note that~$\Xi_W$ contains enough information to determine whether~$W$ is useful and also whether~$W$ is nice.
Furthermore, when~$W$ is useful and nice, then~$\Xi_W$ also contains enough information to determine the collection~$\cc=\cc(W)$ of at least
\begin{equation*}
|\cc| \geq \portion|\acc| = \portion \binom{a}{k-m}b^{m}
\end{equation*}
many inclusion-maximal clique candidates with respect to~$W$, that is guaranteed by \refL{lem:counting}. 
Denoting by~$X_W$ the number of~$K \in \cc$ that are cliques in~$\Gnp$, 
using~$0 \le X_W \le Y_W$ it follows that
\begin{equation}\label{eq:YW:bound:0}
\begin{split}
\Pr(\text{$Y_W=0$, and $W$ is useful and nice}) 
& = \E\bigpar{\Pr(Y_W=0 \mid \Xi_W)\indic{\text{$W$ is useful and nice}}} \\
& \le \E\bigpar{\Pr(X_W=0 \mid \Xi_W)\indic{\text{$W$ is useful and nice}}}.
\end{split}
\end{equation}
As usual, for useful and nice~$W$, we now write~$X_W$ as a sum of indicator random variables:  
\begin{equation*}
X_W := \sum_{K \in \cc}I_K 
\qquad 
\text{ with } 
\qquad 
I_K := \indic{\text{$K$ is clique in $G_{n,p}$}}.
\end{equation*}
After conditioning on~$\Xi_W$, note that all potential edges inside~$W$ are still included independently with probability~$p$. 
By applying Janson's inequality (see, e.g., \mbox{\cite[Theorem~2.14]{JLR}})
to~$X_W$,
for useful and nice~$W$ it thus follows that 
\begin{equation}\label{eq:XW:bound}
\Pr(X_W=0 \mid \Xi_W) \le  \exp\biggpar{-\frac{\mu^2}{2(\mu+\Delta)}} ,
\end{equation}
where the associated expectation and correlation parameters~$\mu$ and~$\Delta$ appearing in~\eqref{eq:XW:bound} are
\begin{equation*}
\mu := \sum_{K \in \cc}\E(I_K \mid \Xi_W)
\qquad 
\text{ and } 
\qquad 
\Delta := \sum_{K \in \cc}\sum_{\substack{J \in \cc: J \neq K,\\|J \cap K| \ge 2}}\E(I_KI_J \mid \Xi_W).
\end{equation*}
Recalling that $\cc \subseteq \acc$ (see \refL{lem:counting}), by~\eqref{eq:almost-cc} all clique candidates~$K \in \cc$ satisfy~$|K|=k$, $|K \cap A|=k-m$ and $|K \cap B_i|=1$ for all~$i \in [m]$. 
Since all all potential edges inside~$W$ are included independently with probability~$p$ (even after conditioning on~$\Xi_W$), 
for useful and nice~$W$ standard random graph estimates give 
\begin{equation*}
\begin{split}
\mu 
& =  |\cC| \cdot p^{\binom{k}{2}} 
\ge \nu \binom{a}{k-m}b^{m} p^{\binom{k}{2}} 
\end{split}
\end{equation*}
and (by taking all possible overlaps of the clique candidates~$K,J \in \cc$ in~$A$ and~$B_1, \ldots, B_m$ into account) also 
\begin{equation}\label{eq:XW:Delta}
\begin{split}
\mu + \Delta  
& = \sum_{K \in \cc}\sum_{\substack{J \in \cc:\\ |J \cap K| \ge 2}}\E(I_K I_J \mid \Xi_W)
 = \sum_{2 \le r \le k}\sum_{K \in \cc}\sum_{\substack{J \in \cc:\\ |J \cap K| =r}}p^{2\binom{k}{2}-\binom{r}{2}}\\
& \le \sum_{2 \le r \le k} \sum_{\substack{x+y=r:\\ 0 \le x \le k-m\\0 \le y \le m}} \underbrace{|\cC| \cdot \binom{k-m}{x}\binom{a-(k-m)}{(k-m)-x}\binom{m}{y}b^{m-y} \cdot p^{2\binom{k}{2}-\binom{r}{2}}}_{=: \Delta_{x,y}} .
\end{split}
\end{equation}
Fix~$x+y=r$. 
Note that in $\Delta_{x,y}$ we have~$\binom{a-(k-m)}{(k-m)-x} \le \binom{a}{(k-m)-x}$ and~$\binom{k-m}{x}\binom{m}{y} \le \binom{k}{x+y}\le k^{x+y}=k^r$. 
Recalling that~$a \ge |W|/4 \ge \ell_0/4$, from~\eqref{ass:partition} we know that~$a \geq 2k^2 \ge 2rk$, 
so using~$a \ge b$ and~$x \le x+y= r$ it follows that $\bigpar{1-(k-m)/a}^x \ge 1-rk/a \ge 1/2$ and 
\begin{equation}\label{eq:XW:ratio}
\begin{split}
\frac{\mu^2}{\Delta_{x,y}} & \ge \frac{\nu \binom{a}{k-m} b^yp^{\binom{r}{2}}}{\binom{a}{(k-m)-x} k^r}\\
& = \nu \cdot \frac{(a-(k-m)+x)!}{(a-(k-m))! a^x} \cdot \frac{(k-m-x)!}{(k-m)!k^r}\cdot a^x b^y p^{\binom{r}{2}}\\
& \ge \nu \cdot \bigpar{1-(k-m)/a}^x \cdot k^{-(x+r)} \cdot b^{x+y}p^{\binom{r}{2}}\\
& \ge \nu/2 \cdot (b/k^2)^rp^{\binom{r}{2}} =: \lambda_r.
\end{split}
\end{equation}
Note that the ratio~$\lambda_{r+1}/\lambda_{r} = b/k^2 \cdot p^{r}$ is decreasing in~$r$, which implies that the minimum of~$\lambda_2, \ldots, \lambda_k$ is either~$\lambda_2$ or~$\lambda_k$, i.e., that  ${\min_{2 \le r \le k} \lambda_r} = {\min\{\lambda_2,\lambda_k\}}$. 
Using~$\Delta_{x,y} \le \mu^2/\lambda_r$ and the definition of~$\lambda_r$ 
together with the observation that in~\eqref{eq:XW:Delta} there are at most~$k$ choices of~$x,y$ with~$x+y=r$ for a given~$r$ (due to $0 \le x \le k-m \le k-1$), 
for useful and nice~$W$ it follows that the exponent in inequality~\eqref{eq:XW:bound} satisfies
\begin{equation}\label{eq:Janson:exponent}
\frac{\mu^2}{2(\mu+\Delta)} 
\ge \frac{\mu^2}{2\sum_{2 \le r \le k} k \mu^2/\lambda_r}
\ge \frac{\min_{2 \le r \le k} \lambda_r}{2 k^2}  
= 
\frac{\nu}{4} \cdot \min\left\{\frac{(b/k^2)^2p}{k^2} , \; \frac{(b/k^2)^k p^{\binom{k}{2}}}{k^2}\right\}.
\end{equation}
Recall that~$p=n^{-\rho}$ and~$b = \ceil{\ell_1(W)/(4m)}$. 
Using first the assumption~\eqref{ass:Janson:k:bound} and estimates~\eqref{eq:l1W:lower}--\eqref{eq:l1W:ratio} for~$m < k \le \log(n)$ and~$\ell_1(W)$, 
and finally the estimate $\delta \leq 1/2 - \rho - 9 \log(\log(n))/\log(n)$ from~\eqref{ass:Janson-delta-bound}, it follows~that 
\begin{align}\label{eq:Janson:exponent:1}
\frac{(b/k^2)^2p}{k^2|W| \log n}
\geq \frac{\ell_1(W)p}{16 k^6 m^2 \log n} \cdot  \frac{\ell_1(W)}{|W|} \gg \frac{n^{1-2\delta}p^2}{\log^{12}(n)} 
= \frac{n^{1-2\delta -2\rho}}{\log^{12}(n)} \gg 1.
\end{align}
By analogous reasoning,
using the estimate $\delta \le \frac{k-1}{k}\bigpar{1-\rho(k/2+1)}- 9 \log(\log(n))/\log(n)$ from~\eqref{ass:Janson-delta-bound}, 
and that~${4+7/(k-1)} \le 7.5$ follows from~\eqref{ass:Janson:k:bound}, we infer~that 
\begin{equation}\label{eq:Janson:exponent:2}
\begin{split}
\frac{(b/k^2)^k p^{\binom{k}{2}}}{k^2|W| \log n} & \ge \frac{1}{4k^4 m \log n} \cdot \frac{\ell_1(W)}{|W|} \cdot \biggpar{ \frac{\ell_1(W)p^{k/2}}{4mk^2}}^{k-1}\\
& \ge \frac{n^{-\delta}}{32\log^{7}(n)} \cdot \biggpar{ \frac{n^{1-\delta}p^{k/2+1}}{32\log^{4}(n)}}^{k-1}\\
& \gg \Biggpar{ \frac{n^{1-\tfrac{k}{k-1}\delta-\rho(k/2+1)}}{\log^{8}(n)}}^{k-1} \gg 1 .
\end{split}
\end{equation}
Since~$\Xi_W$ determines whether~$W$ is useful and nice, by inserting~\eqref{eq:Janson:exponent}--\eqref{eq:Janson:exponent:2} into~\eqref{eq:XW:bound}  
it follows that, say,  
\begin{equation}\label{eq:YW:bound:2}
\Pr(X_W=0 \mid \Xi_W)\indic{\text{$W$ useful and nice}} \ll  \exp\Bigpar{-2|W| \log n} = n^{-2|W|} .
\end{equation}

Finally, by inserting the estimate~\eqref{eq:YW:bound:2} into~\eqref{eq:YW:bound:0}, the probability that~$Y_W=0$ holds for some useful and nice vertex set~$W \subseteq V$ is thus routinely seen (by a standard union bound argument that takes all possible~$W \subseteq V$ into account) to be at most
\begin{equation*}
\begin{split}
\sum_{W \subseteq V: |W| \ge \ell_1(W)} \Pr( \text{$Y_W=0$, and $W$ useful and nice}) 
\le \sum_{W \subseteq V: |W| \ge 1} n^{-2|W|} 
\le \sum_{w \ge 1} n^w \cdot n^{-2w} = o(1) .
\end{split}
\end{equation*}
This completes the proof of \refL{lem:Janson}, 
since by \refL{lem:density} whp every useful set~$W$ is also nice.
\end{proof}

\begin{remark}[Improved estimate for~$m=1$]\label{rem:Janson}
For later reference (see \refS{sec:main:rem}) we record that, in the special case~$m=1$, 
in the above application of Janson's inequality~\eqref{eq:XW:bound} we can improve the exponent estimate~\eqref{eq:Janson:exponent}~to
\begin{equation}\label{eq:Janson:exponent:m1}
\frac{\mu^2}{2(\mu+\Delta)} 
\ge 
\frac{\nu}{4} \cdot \min\left\{\frac{(a/k^2)\cdot (b/k^2)p}{k^2} , \; \frac{(a/k^2) \cdot (b/k^2)^{k-1} p^{\binom{k}{2}}}{k^2}\right\}.
\end{equation}
Indeed, the key observation is that in~\eqref{eq:XW:Delta} we have~$0 \le y \le m = 1$ and~$r \ge 2$, 
so that in the subsequent estimates with fixed~$x+y=r$ we have~$x=r-y \ge 1$. 
This implies that estimate~\eqref{eq:XW:ratio} now holds with~$\lambda_r := \nu/2 \cdot (a/k^2) \cdot (b/k^2)^{r-1}p^{\binom{r}{2}}$, 
and by mimicking the argument leading to~\eqref{eq:Janson:exponent} we then readily infer~\eqref{eq:Janson:exponent:m1}.
\end{remark}

\subsection{Technical part: deferred choice of parameters}\label{sec:choices}
In this section we complete the proof of \refT{thm:max-clique} (modulo the routine proofs of \refL{lem:non-nbr:lb} and~\ref{lem:density} in \refS{sec:deferred}) 
by giving the deferred proof of \refL{lem:choices}, i.e., we show that we can always choose suitable parameters~$\delta$, $m$ and~$k$ satisfying the technical inequalities~\eqref{ass:Janson:k:bound}--\eqref{eq:m1:delta}. 
Our concrete choices in \refS{sec:param:choices} are in some sense already a significant part of the proof, as checking that they indeed satisfy the desired inequalities is tedious, but conceptually routine. 
Here one difficulty is that for~$p \ge n^{-o(1)}$ and~$p \le n^{-o(1)}$ we need to establish rather different guarantees on~$\delta$, which are required for the different conclusions of Theorem~\ref{thm:main1} and~\ref{thm:main2}.
However, the major technical difficulty is that for very small edge-probabilities~$p=p(n)$ there is hardly any elbow room in the arguments, and so for~$n^{-2/5+\eps} \le p \ll n^{-1/3}$ and $n^{-1/3+\eps} \le p < n^{-1/4-\eps}$ we effectively need to do some calculations for~$k=3$ and~$k=4$ on an ad-hoc basis (rather than relying on general formulas and estimates). 

Recall that \refT{thm:main1} assumes that $p=p(n)$ satisfies $n^{-2/5 + \eps} \ll p \ll n^{-1/3}$ or $n^{-1/3 + \eps} \ll p \ll 1$, where~$\eps >0$ is fixed. 
Furthermore, we have~$\sigma=1/100$, $\alpha=4/5$, $\portion=1/10$ and~$\rho = \log_n(1/p)$.

\subsubsection{Concrete choices of~$\delta$, $m$ and~$k$}\label{sec:param:choices}
In this subsection we define the parameters $m$, $k$ and $\delta$ by a case distinction.  

\textbf{Case $n^{-\sigma} \le p \ll 1$: }
Here we 
define~$\delta=\delta(\rho,n)$ as in~\eqref{eq:m2:delta}, 
and set 
\begin{equation}
\label{eq:m2:km}
\begin{split}
m := \biggfloor{\frac{2}{3\rho}}
\qquad \text{and} \qquad 
k := \biggceil{\frac{1}{\rho} + \frac{1}{2}} ,
\end{split}
\end{equation}
for which it is routine (using $0 < \rho \le \sigma=1/100$)  to check that~$m \ge 2/(3\rho)-1 \ge 2$ and, say, 
\begin{equation}\label{eq:m2:inequality}
(k-m-1)\rho \ge 1/4.
\end{equation}

\textbf{Case $n^{-2/5+\eps} \le p \ll n^{-1/3}$ or $n^{-1/3+\eps} \le p < n^{-\sigma}$: }
Here we
define~$\delta=\delta(\eps,\sigma)$ as in~\eqref{eq:m1:delta}, 
and set 
\begin{equation}
\label{eq:m1:km}
\begin{split}
m  := 1
\qquad \text{and} \qquad 
k := \biggceil{\frac{1}{\rho} + \frac{\indic{\rho \le 4/15}}{2}} , 
\end{split}
\end{equation}
for which a basic case distinction (depending on whether $\sigma \le \rho \le 1/3$ or $1/3 \le \rho \le 2/5$, in which case either~$k \ge 1/\rho$ or~$k=3$) shows that, say, 
\begin{equation}\label{eq:m1:inequality}
(k-m-1)\rho \ge 1/4.
\end{equation}

\subsubsection{Inequality~\eqref{ass:Janson:k:bound}: bounds on~$m$ and~$k$}
In this subsection we verify inequality~\eqref{ass:Janson:k:bound} of~\refL{lem:choices}, using that~$\rho = \log_n(1/p) \le 2/5$ and~$p \ll 1$. 

We start by noting that~$k \le 1/\rho + 3/2 \le 2/\rho = 2\log_{1/p}(n) \ll \log(n)$. 
Furthermore, since~$m$ and~$k$ are both integers,  estimates~\eqref{eq:m2:inequality} and~\eqref{eq:m1:inequality} each imply~$k - m = \ceil{k - m} \ge \ceil{1+1/(4 \rho)} \ge 2$, establishing inequality~\eqref{ass:Janson:k:bound}.

\subsubsection{Inequality~\eqref{ass:Janson-delta-bound}: bounds on~$\delta$}\label{sec:technical:delta}
In this subsection we verify inequality~\eqref{ass:Janson-delta-bound} of \refL{lem:choices} by several case distinctions, using that~$\sigma=1/100$. 

We start with the case when~$\rho \leq \sigma$. Then $1/\rho + 1/2 \le k \le 1/\rho + 3/2$ and thus 
\begin{equation}\label{eq:test0}
\min\biggcpar{\frac{1}{2} - \rho, \ \frac{k-1}{k}\Bigpar{1-\rho(k/2+1)}} 
\ge 
\min\biggcpar{\frac{1}{2} - \rho, \ 1-(\rho k/2+\rho)-1/k} 
\ge \frac{1}{2} - 3\rho .
\end{equation}
We next consider the case when $\sigma < \rho < 4/15$. Then $1/\rho + 1/2 \le k \le 1/\rho + 3/2$ and thus 
\begin{equation}\label{eq:test1}
\min\biggcpar{\frac{1}{2} - \rho, \ \frac{k-1}{k}\Bigpar{1-\rho(k/2+1)}} \geq \min\biggcpar{\frac{1}{2} - \rho, \ (1-\rho)\bigpar{1/2-7\rho/4}} 
\ge \sigma.
\end{equation}
Thereafter we consider the case when $4/15 \leq \rho \leq 1/3 - \eps$. Then~$k = \ceil{1/\rho}=4$ and thus 
\begin{equation}\label{eq:test2}
\min\biggcpar{\frac{1}{2} - \rho, \ \frac{k-1}{k}\Bigpar{1-\rho(k/2+1)}} 
\ge 
\min\biggcpar{\sigma, \ \frac{3}{4}\bigpar{1 - 3\rho}} \ge \min\bigcpar{\sigma, \: 9\eps/4} .
\end{equation}
Finally we consider the remaining case when $1/3 \le \rho \le 2/5 -\eps$. Then~$k = \ceil{1/\rho}= 3$ and thus 
\begin{equation}\label{eq:test3}
\min\biggcpar{\frac{1}{2} - \rho, \ \frac{k-1}{k}\Bigpar{1-\rho(k/2+1)}} 
\ge 
\min\biggcpar{\sigma, \ \frac{2}{3}\bigpar{1 - 5 \rho/2}} 
\ge \min\bigcpar{\sigma, \: 5\eps/3} .
\end{equation}
To sum up: inspecting our choices~\eqref{eq:m2:delta} and~\eqref{eq:m1:delta} of~$\delta$ for~$p \ge n^{-\sigma}$ and~$p < n^{-\sigma}$, 
a moment's thought reveals that the estimates~\eqref{eq:test0}--\eqref{eq:test3} together establish inequality~\eqref{ass:Janson-delta-bound} for all sufficiently large~$n$ (depending on~$\eps$).

\subsubsection{Inequality~\eqref{ass:s}: bounds on~$s$ and~$m$}\label{sec:ass:s}
In this subsection we verify inequality~\eqref{ass:s} of \refL{lem:choices}. 
Recall that~$s =\floor{\delta \log_{1/(1-p)}(n)}$ and~$\rho = \log_n(1/p)$ by~\eqref{def:s} and~\eqref{eq:param}, 
and that~$\delta \ge \min\{\eps,1/200\} = \Omega(1)$ by the discussion below \refL{lem:choices}. 
Furthermore, as in~\eqref{eq:chi:lower}, using~$p \ll 1$ we have~$\log_{1/(1-p)}(n) = (1-o(1))\log(n)/p$.  
Taking the two different definitions~\eqref{eq:m2:km} and~\eqref{eq:m1:km} of~$m$ into account, 
using~$p \ll 1$ it therefore follows~that
\begin{equation}\label{eq:ass:s}
\frac{s+1}{m} 
\ge \frac{\delta \log_{1/(1-p)}(n)}{\max\{1, 1/\rho\}}
\ge (1-o(1)) \frac{\delta \: \min\bigcpar{\log(n), \: \log(1/p)}}{p} 
\gg 1 ,
\end{equation}
which in view of~$m \ge 1$ readily establishes inequality~\eqref{ass:s}, with room to spare.

\subsubsection{Inequality~\eqref{ass:partition}: bound on $\ell_0$ and $n^{1-\delta}p$}\label{sec:aux:ell0_np}
In this subsection we verify inequality~\eqref{ass:partition} of \refL{lem:choices}. 
Recall that $\ell_0=\ell_1(\emptyset)$ by~\eqref{def:ell0}. 
By proceeding as in~\eqref{eq:l1W:lower}, using~$p=n^{-\rho}$ and the estimate~$\delta \leq 1/2 - \rho - 9 \log(\log(n))/\log(n)$ from~\eqref{ass:Janson-delta-bound} it follows that 
\begin{equation}\label{eq:l1W:lower_w_choice}
\min\Bigcpar{\ell_0, \: n^{1-\delta}p} \gg n^{1-\delta}p \log^{-2}(n) = \frac{n^{1-\delta-\rho}}{\log^2(n)} \gg n^{1/2} .
\end{equation}
This readily establishes inequality~\eqref{ass:partition}, since~\eqref{ass:Janson:k:bound} implies $m < k \le \log(n)$.

\subsubsection{Inequalities~\eqref{eq:density:1} and~\eqref{eq:density:2}: bounds involving~$x_i$ and~$m$}
In this subsection we verify  inequalities~\eqref{eq:density:1} and~\eqref{eq:density:2}  of \refL{lem:choices}.

With an eye on inequality~\eqref{eq:density:1}, 
using the definition~\eqref{eq:param} of~$r_i=e^{\gr i}$ and~$\gr = \log^{-4}(n)$ we see that 
\begin{equation}\label{eq:riminus1:lower}
r_i-1 \ge r_1 - 1 = e^{\gr}-1 
= (1+o(1)) \log^{-4}(n) \qquad \text{ for~$i \ge 1$.}
\end{equation}
It is well-known (see e.g., the proof of Corollary~8 in~\cite{SW2020}) that $\phi(x) = (1+x)\log(1+x) - x$ satisfies
\begin{equation}\label{eq:phi-estimate:1}
\phi(x) \ge \min\{x,x^2\}/2 \qquad \text{ for~$x \ge 0$.}
\end{equation}
Using these estimates together with~\eqref{eq:l1W:lower_w_choice} and~$p \ge n^{-2/5+\eps}$ it follows that $\phi(r_i - 1) \gg \log^{-9}(n)$ and 
\begin{equation}\label{eq:density1:auxiliary}
\frac{\log^3(n)/\ell_0}{\phi(r_i - 1)p} 
\ll \frac{ \log^{12}(n)}{p n^{1/2}} \ll 1 . 
\end{equation}
Combining these estimates with~\eqref{eq:l1W:lower_w_choice} and
the definition~\eqref{eq:param} of $x_i = \log(n)/[\phi(r_i - 1)p]$, it  follows~that
\begin{equation}\label{eq:density1:lower}
\begin{split}
\frac{\ceil{x_i}\bigpar{\phi(r_i-1)p - \log^3(n)/\ell_0}}{1+\max\bigcpar{\log\bigpar{n\log^2(n)e/\ell_0},0}}
& \ge 
\frac{(1-o(1)) x_i\phi(r_i-1)p}{\log\bigpar{n^{1/2+o(1)}}} 
\ge 2-o(1) > 1 ,
\end{split}
\end{equation}
which establishes inequality~\eqref{eq:density:1}. 

We now turn to inequality~\eqref{eq:density:2}, for which it suffices to consider the case~$m \ge 2$, 
where we actually have ${m =\floor{2/(3\rho)}}$ by~\eqref{eq:m2:km}. Here we use the basic inequality
\begin{equation}\label{eq:phi-estimate:2}
\phi(x) \ge \phi(x)-1 = (1+x)\log((1+x)/e)
\end{equation}
together with~\eqref{eq:l1W:lower_w_choice} as well as~$\alpha=\Theta(1)$ and~$p \ll 1$ to deduce that $\phi(\alpha/p-1)p \ge (1-o(1)) \alpha \log(1/p)$ and 
\[
\frac{\log^3(n)/\ell_0}{\phi(\alpha/p-1)p} \ll \frac{ \log^3(n)}{\log(1/p) n^{1/2}} \ll 1 .
\]
Combining these estimates with~\eqref{eq:l1W:lower_w_choice} and~$\log(1/p) = \rho \log n$, 
using~$\alpha=4/5$ and~$p \ll 1$ together with $\phi(\alpha/p-1)p \ge (1-o(1)) \alpha \log(1/p)$ 
 and ${m+1}\ge 2/(3\rho)$  it then follows~that
\begin{align*}
\frac{(m+1) \bigpar{\phi(\alpha/p-1)p - \log^3(n)/\ell_0}}{1+\max\bigcpar{\log\bigpar{n\log^2(n)e/\ell_0},0}}
&  \ge \frac{(1-o(1))(m+1)\phi(\alpha/p-1)p }{\log\bigpar{n^{1/2+o(1)}}} \ge  \frac{(2-o(1))(m+1)\alpha \log(1/p)}{\log(n)} \\
 & \ge (8/5-o(1)) (m+1)\rho \ge 16/15-o(1) > 1 ,
\end{align*}
which establishes inequality~\eqref{eq:density:2}.

\subsubsection{Inequality~\eqref{ass:counting}: bound on~$\Lambda$ from \refL{lem:counting}}\label{sec:Lambda}
In this subsection we verify the remaining inequality~\eqref{ass:counting} of \refL{lem:choices}. 
Recalling~\eqref{eq:def:Lambda}, our goal is to show~that
\begin{equation}\label{eq:def:Lambda:recall}
\Lambda = \underbrace{(m+1)x_1(r_2 p)^{k-m}  + n (r_1 p)^k}_{=: \Lambda_0} + \indic{m \ge 2} \Pi_{\alpha} + \indic{m=1}\Bigsqpar{\Pi_{1/\log(n)} + 0.7}
\end{equation}
is at most~$1-\portion=0.9$, 
where~$\Pi_{\alpha}$ and~$\Pi_{1/\log(n)}$ are defined as in~\eqref{eq:accX:1}. 

We start with~$\Lambda_0$ as defined in~\eqref{eq:def:Lambda:recall}. 
Recall that~\eqref{ass:Janson:k:bound} implies  $m < k \le \log(n)$.
Using~\eqref{eq:riminus1:lower}--\eqref{eq:phi-estimate:1} and~$r_i = e^{i \gr}$ together with~$p=n^{-\rho}$ and~$\gr = \log^{-4}(n)$, in view of the auxiliary estimates~\eqref{eq:m2:inequality} and~\eqref{eq:m1:inequality} as well as the definitions~\eqref{eq:m2:km} and~\eqref{eq:m1:km} of~$k$ it follows that 
\begin{equation}\label{eq:def:Lambda0:1}
\begin{split}
\Lambda_0 & \le \log(n) \cdot \frac{\log(n)}{\phi(r_1-1)} \cdot e^{2 \gr (k-m)} p^{k-m-1}  + e^{\gr k} n p^k  \\
& \le (2+o(1)) \cdot \log^{10}(n) \cdot n^{-(k-m-1)\rho} + 2 n^{1-\rho k} \\
& \le \underbrace{3 \cdot \log^{10}(n) \cdot n^{-1/4} + 2 p^{1/2}\indic{\rho \le 4/15}}_{=o(1)} + 2 n^{-(\rho\ceil{1/\rho}-1)} \indic{\rho > 4/15} .
\end{split}
\end{equation}
Recall that~$\rho = \log_n(1/p)$. 
By distinguishing whether~$n^{-2/5} \ll p \ll n^{-1/3}$ and~$n^{-1/3} \ll p \le n^{-4/15}$ (in which case either~$\ceil{1/\rho}=3$ and~$\ceil{1/\rho}=4$), 
it is not difficult (by writing~$p \le n^{-1/3}/\omega$ and $p \le n^{-4/15}$, say)  
to verify that~$\rho\ceil{1/\rho}-1 \gg 1/\log(n)$ when~$\rho > 4/15$. 
Inserting this estimate into~\eqref{eq:def:Lambda0:1} then~yields
\begin{equation}\label{eq:def:Lambda0:2}
\Lambda_0 = o(1)  .
\end{equation}

We now turn to~$\Pi_{\alpha}$ and~$\Pi_{1/\log(n)}$ appearing in~\eqref{eq:def:Lambda:recall}. 
By the definitions in~\eqref{eq:param} and~\eqref{eq:accX:1} we~have
\begin{equation}\label{eq:Pi:alpha}
\Pi_\alpha = \log(n) \sum_{i \geq 2:pr_{i} \leq \alpha} 
\frac{(pr_{i})^{k-m}}{\phi(r_i-1)p} \Bigpar{e^{\gr (k-m)}-1} .
\end{equation}
The sum in~\eqref{eq:Pi:alpha} is intuitively dominated (up to constant factors) by the last term, and to make this rigorous we shall now separately analyze the behavior of the term~$\phi(r_i-1)p$ and also the behavior of the sum without the term~$\phi(r_i-1)p$. 
Turning to the details, 
using the definition~\eqref{eq:param} of~$r_i=e^{\gr i}$ and~$\gr = \log^{-4}(n)$ together with~$k \le \log n$, 
for any~$\beta > 0$ and~$1 \le s \le k$ it follows that $\gr s=o(1)$ as well as~$e^{-\gr s} = 1- \gr s(1+o(1))$ and 
\begin{equation}\label{eq:Pi:alpha:sum}
\sum_{i \geq 0:pr_{i} \leq \beta} 
(pr_{i})^{s} \le \beta^{s} \sum_{j \ge 0} e^{-\gr s j} \le \frac{\beta^{s}}{1-e^{-\gr s}}  = \frac{(1+o(1)) \beta^{s}}{\gr s}.
\end{equation}
Next we carefully estimate the term~$\phi(r_i-1)$ in~\eqref{eq:Pi:alpha}.
Using estimates~\eqref{eq:riminus1:lower}--\eqref{eq:phi-estimate:1}, for all~$i \ge 1$ we have 
\begin{equation}\label{eq:Pi:phi:small}
\phi(r_i-1) \ge \min\bigcpar{(r_i-1), \: (r_i-1)^2}/2  \ge (\tfrac{1}{2}+o(1)) \log^{-8}(n) \gg \log^{-9}(n).
\end{equation}
Using estimate~\eqref{eq:phi-estimate:2}, for~$i \ge 1$ we also have
\begin{equation}\label{eq:Pi:phi:large}
\phi(r_i-1) \ge r_i \log(r_i/e) \ge 
\begin{cases}
r_i \quad & \text{ if $r_i \ge e^2$}, \\
r_i \log\bigpar{p^{\sigma - 1}} \quad & \text{ if $r_i \ge ep^{\sigma - 1}$}.
\end{cases}
\end{equation}
We are now ready to estimate $\Pi_{\alpha}$ from~\eqref{eq:Pi:alpha}:  
by distinguishing whether~$r_i \le e^2$ or~$r_i \ge e^2$ or~$r_i \ge ep^{\sigma-1}$, 
using estimates~\eqref{eq:Pi:alpha:sum}--\eqref{eq:Pi:phi:large} 
together with $k-m-1 \ge 1$ and $e^{\gr (k-m)}-1 = \gr (k-m)(1+o(1))$ 
it follows that, say, 
\begin{equation}\label{eq:Pi:alpha:2}
\begin{split}
\Pi_\alpha &\le 
\log(n) \cdot \Biggsqpar{
\sum_{i \geq 2:pr_{i} \leq e^2p} 
\frac{(pr_{i})^{k-m}}{\log^{-9}(n)}
+ 
\sum_{i \geq 2:p r_i \le ep^{\sigma}} 
(pr_{i})^{k-m-1}
+ 
\sum_{i \geq 2:pr_{i} \leq \alpha} 
\frac{(pr_{i})^{k-m-1}}{\log(p^{\sigma - 1})}
}
\cdot 
\bigpar{e^{\gr (k-m)}-1} \\
& \le (1+o(1)) \cdot \biggsqpar{ \log^{10}(n) \cdot (e^2p)^{k-m} + \log(n) \cdot (ep^{\sigma})^{k-m-1} + \frac{\log (n)}{\log(p^{\sigma - 1})} \alpha^{k-m-1}} \cdot \frac{k-m}{k-m-1}.
\end{split}
\end{equation}
Using~$e^2 p  \ll p^{1/2}$ and~$ep^{\sigma} \ll p^{\sigma/2}$ as well as $p=n^{-\rho}$ together with  estimates~\eqref{eq:m2:inequality} and~\eqref{eq:m1:inequality},  we see~that 
\begin{equation}\label{eq:Pi:alpha:first}
(e^2p)^{k-m} + (ep^{\sigma})^{k-m-1} \ll n^{-(k-m)\rho/2}  + n^{-\sigma(k-m-1)\rho/2}
\le n^{-1/8} + n^{-\sigma/8} \ll \log^{-10}(n). 
\end{equation}
To sum up, by combining~\eqref{eq:Pi:alpha:2}--\eqref{eq:Pi:alpha:first} with~$k-m-1 \ge 1$ it follows that $\tfrac{k-m}{k-m-1} \le 2$~and 
\begin{equation}\label{eq:Pi:alpha:3}
\begin{split}
\Pi_\alpha & \le o(1) + (2+o(1)) \cdot \frac{\alpha^{k-m-1}\log(n)}{(1-\sigma)\log(1/p)} .
\end{split}
\end{equation}

After these preparations, we are now ready to bound~$\Lambda$ from~\eqref{eq:def:Lambda:recall} via a case distinction. 
First we consider the case~$m \ge 2$, where~$n^{-\sigma} \le p \ll 1$. 
Using~$\alpha = 4/5$ and~$\rho = \log_n(1/p)$ together with the auxiliary estimates~\eqref{eq:m2:inequality} and~\eqref{eq:m1:inequality}, 
in view of~$\rho \le \sigma=1/100$ it follows via basic calculus~that 
\begin{equation}\label{eq:Pi:alpha:second}
\frac{\alpha^{k-m-1}\log(n)}{(1-\sigma)\log(1/p)} \le \frac{(4/5)^{1/(4\rho)}}{0.99 \rho}
\le \frac{(4/5)^{1/(4\sigma)}}{0.99 \sigma} < 0.4 ,
\end{equation}
which combined with~\eqref{eq:def:Lambda:recall}, \eqref{eq:def:Lambda0:2} and~\eqref{eq:Pi:alpha:3} then yields
\begin{equation}\label{eq:Pi:alpha:Lambda}
\Lambda \le \Lambda_0 + \Pi_{\alpha} \le o(1) + (2+o(1)) \cdot 
 0.4 < 0.9 = 1-\portion . 
\end{equation}

Finally we consider the remaining case~$m=1$, where~$p \le n^{-\sigma}$. 
Note that the arguments leading to~\eqref{eq:Pi:alpha:3} only used~$\alpha > 0$, i.e., remain valid (for all sufficiently large~$n$) when we replace~$\alpha$ with~$1/\log(n)$. 
Using~$1/p \ge n^{\sigma}$ and~$\sigma = 1/100$ together with~$k-m-1 \ge 1$, 
in view of~\eqref{eq:Pi:alpha:3}  it therefore follows~that 
\begin{equation*}
\Pi_{1/\log(n)} \le o(1) + O(1)  \cdot \biggpar{\frac{1}{\log(n)}}^{k-m-1} \cdot \frac{\log(n)}{\log(1/p)} \le o(1) + \frac{O(1)}{\log(n)} = o(1), 
\end{equation*}
which combined with~\eqref{eq:def:Lambda:recall} and \eqref{eq:def:Lambda0:2} then yields
\begin{equation}\label{eq:Pi:alpha:Lambda:2}
\Lambda \le \Lambda_0 + \Pi_{1/\log(n)} + 0.7 \le o(1) + o(1) + 0.7< 0.9 = 1-\portion.
\end{equation}
Note that estimates~\eqref{eq:Pi:alpha:Lambda} and~\eqref{eq:Pi:alpha:Lambda:2} together establish inequality~\eqref{ass:counting}, completing the proof of~\refL{lem:choices}.
\qed

\begin{remark}[Relaxing the assumption~$p \ll n^{-1/3}$]\label{rem:np13:improvement}
In the above proof of \refT{thm:max-clique} (and the upcoming proof of \refT{rem:max-clique}) the assumed bound $p \ll n^{-1/3}$ can easily be relaxed to~$p \le \tfrac{1}{3}n^{-1/3}$, say. 
Indeed, $p \ll n^{-1/3}$ is only used in the proof of \refL{lem:choices} to conclude that~$e^{\gr k} n p^k=o(1)$ in the arguments~\eqref{eq:def:Lambda0:1}--\eqref{eq:def:Lambda0:2} leading to the estimate~$\Lambda_0=o(1)$.  
The crux is that for~$p \le \tfrac{1}{3}n^{-1/3}$ we have~$e^{\gr k} n p^k \le (1+o(1)) \cdot 3^{-3} < 0.04$ and thus~$\Lambda_0 \le 0.05$, say, 
which in turn readily ensures 
that the arguments in~\eqref{eq:Pi:alpha:Lambda} and~\eqref{eq:Pi:alpha:Lambda:2} still give the key conclusion~$\Lambda < 1-\portion$. 
It follows that we can relax the assumption~$p \ll n^{-1/3}$ in \refT{thm:main1} and~\ref{thm:main3} to~$p \le \tfrac{1}{3}n^{-1/3}$.  
\end{remark}

\section{Refinement of \refS{sec:main}: proof of \refT{rem:max-clique}}\label{sec:main:rem}
In this section we prove \refT{rem:max-clique}, which is a refinement of our main technical result \refT{thm:max-clique}. 
Our approach is to modify the proof from \refS{sec:main} when the edge-probability~$p=n^{-2/5+\eps}$ satisfies~$(\log n)^\omega n^{-2/5} \le p \ll n^{-1/3}$. 
Following the definition~\eqref{eq:m1:km} from \refS{sec:param:choices}, in this case we have~$m=1$ and~$k=3$.

Ignoring technicalities, the main proof idea is that we can use \refR{rem:Janson} to improve the constraint~\eqref{ass:Janson-delta-bound} on~$\delta=\delta(n,p,\eps)$, 
which is mainly used to lower bound the exponents~\eqref{eq:Janson:exponent:1}--\eqref{eq:Janson:exponent:2} of Janson's inequality.
As we shall see in \refS{sec:main:rem:janson} below, this intuitively will allow us to improve the constraint~$\tfrac{k-1}{k}(1-\rho(k/2+1))$ in~\eqref{ass:Janson-delta-bound} to $1-\rho(k/2+1) = 1-5 \rho/2=5 \eps/2$, which matches (up to second order error terms) the desired form of~$\delta$.

The details of the proof of \refT{rem:max-clique} are spread across the remainder of this section. 
Recall that we are in the case~$m=1$, and so in~\eqref{eq:m1:delta} of \refS{sec:main} we previously set~$\delta = \min \{\eps,\sigma/2\}$. 
In view of~$\eps \gg \log(\log(n))/\log(n)$, 
for the proof of \refT{rem:max-clique} it thus suffices to verify that the proof of \refT{thm:max-clique} from \refS{sec:main} can indeed be made to work (in the case~$m=1$, by suitable minor modifications) with the new parameter choice 
\begin{equation}\label{def:delta:m1:asymp}
\delta := \underbrace{\min\biggcpar{1-5\rho/2, \ \rho}}_{ = 1-5\rho/2 = 5\eps/2} -\frac{9 \log(\log(n))}{\log(n)} ,
\end{equation}
where for evaluating the minimum we used that~$\rho=\log_n(1/p)=2/5-\eps$ satisfies~$1/3 \le \rho \le 2/5$. 

\subsection{Application of old choice~\eqref{eq:m1:delta} of~$\delta$}\label{sec:lem:density:copy}
Recall that in~\refS{sec:main} we previously used~$\delta = \min \{\eps,\sigma/2\}$, see~\eqref{eq:m1:delta}. 
In the proof of \refT{thm:max-clique}, 
the bound~$\delta \le \sigma/2$ is only used in the deferred proof of \refL{lem:density} in \refS{sec:lem:density}, 
more precisely in~\eqref{eq:density:3} to establish~that
\begin{equation}\label{eq:density:3:goal}
n^{\delta}p \ll \log^{-7}(n).
\end{equation}
We now show that~\eqref{eq:density:3:goal} remains valid for our new choice~\eqref{def:delta:m1:asymp} of~$\delta$, 
which is routine: using~$p = n^{-\rho}$ and~$\delta \le \rho - 9 \log(\log(n))/\log(n)$ we here readily infer~that
\[
n^{\delta}p = n^{\delta-\rho} \le \log^{-9}(n) \ll \log^{-7}(n) .
\]

In the proof of \refT{thm:max-clique}, the other bound~$\delta \le \eps$ from~\eqref{eq:m1:delta} is only used in three places.
First, in~\refS{sec:main} below \refL{lem:choices} it is used in the arguments leading to $\min\{\eps, 1/200\} \le \delta \le 1/2$: 
in the conclusion of \refT{rem:max-clique} this estimate is replaced by $\delta = (1+o(1))5\eps/2$, 
which in turn readily follows from~$\eps \gg \log(\log(n))/\log(n)$ and the new definition~\eqref{def:delta:m1:asymp} of~$\delta$.
Second, in~\refS{sec:ass:s} the aforementioned old inequality $\delta \ge \min\{\eps, 1/200\} = \Omega(1)$ was also used in the proof of inequality~\eqref{eq:ass:s}, 
which for our new choice~\eqref{def:delta:m1:asymp} of~$\delta= (1+o(1))5\eps/2 \gg \log(\log(n))/\log(n)$ carries over by noting that here~$(s+1)/m = s+1 \ge (1-o(1)) \delta \log(n)/p \gg 1$, with room to spare. 
Third, in \refS{sec:technical:delta} the bound~$\delta \le \eps$ is also used to establish the upper bound~\eqref{ass:Janson-delta-bound} on~$\delta$,
whose applications are the subject of the next subsection (as we shall see, they will again remain valid). 

\subsection{Application of old inequality~\eqref{ass:Janson-delta-bound} for~$\delta$}\label{sec:main:rem:janson}
In the proof of \refT{thm:max-clique} from~\refS{sec:main}, inequality~\eqref{ass:Janson-delta-bound} for~$\delta$ is only used in two places.
Firstly, inequality~\eqref{ass:Janson-delta-bound} is used in the proof of \refL{lem:Janson} in \refS{sec:janson}, 
more precisely in~\eqref{eq:Janson:exponent:1}--\eqref{eq:Janson:exponent:2} to show that the
exponent~\eqref{eq:XW:bound} of Janson's inequality satisfies
\begin{equation}\label{eq:Janson:goal}
\frac{\mu^2}{2(\mu+\Delta)} \gg |W| \log n .
\end{equation}
Secondly, inequality~\eqref{ass:Janson-delta-bound} is used in the proof of~\eqref{eq:l1W:lower_w_choice} in \refS{sec:aux:ell0_np}, 
to establish the technical~estimate 
\begin{equation}\label{eq:l1W:lower_w_choice:goal}
\min\Bigcpar{\ell_0, \: n^{1-\delta}p} \gg n^{1/2}.
\end{equation}

We thus need to show that the two estimates~\eqref{eq:Janson:goal}--\eqref{eq:l1W:lower_w_choice:goal} remain valid for our new choice~\eqref{def:delta:m1:asymp} of~$\delta$, 
and we start with the routine proof of~\eqref{eq:l1W:lower_w_choice:goal}: 
by proceeding as in~\eqref{eq:l1W:lower} and~\eqref{eq:l1W:lower_w_choice}, 
using~$\delta \leq 1 - 5\rho/2 - 9 \log(\log(n))/\log(n)$ and~$\rho =\log_n(1/p) \ge 1/3$ it here readily follows that 
\begin{equation}\label{eq:l1W:lower_w_choice:copy}
\min\Bigcpar{\ell_0, \: n^{1-\delta}p} \gg n^{1-\delta}p \log^{-2}(n) = \frac{n^{1-\delta-\rho}}{\log^2(n)} \ge n^{3 \rho/2} \log^7(n) \gg n^{1/2}.
\end{equation}

We now turn to the key estimate~\eqref{eq:Janson:goal}, which is the crux of the matter.  
Recalling that~$m=1$, \refR{rem:Janson} implies that the exponent in the right-hand side of Janson's inequality~\eqref{eq:XW:bound} can here be written as 
\begin{equation}\label{eq:Janson:exponent:m1:copy}
\frac{\mu^2}{2(\mu+\Delta)} 
\ge 
\frac{\nu}{4} \cdot \min\left\{\frac{(a/k^2)\cdot (b/k^2)p}{k^2} , \; \frac{(a/k^2) \cdot (b/k^2)^{k-1} p^{\binom{k}{2}}}{k^2}\right\}.
\end{equation}
Recall that~$b \ge \ell_1(W)/(4m)$. 
Using~$a \ge |W|/4$ as well as~$m \le k=3$ and $\delta \leq 1 - 2\rho - 9 \log(\log(n))/\log(n)$, 
note that the arguments leading to estimate~\eqref{eq:Janson:exponent:1} here imply (with room to spare)~that 
\begin{align}\label{eq:Janson:exponent:1:m1}
\frac{(a/k^2) \cdot (b/k^2)p}{k^2|W| \log n}
\geq \frac{\ell_1(W)p}{4 k^6 m \log n} \cdot  \frac{a}{|W|} \gg \frac{n^{1-\delta}p^2}{\log^{4}(n)} 
= \frac{n^{1-\delta -2\rho}}{\log^{4}(n)} \gg 1.
\end{align}
Similarly, using~$a \ge |W|/4$, $b \ge \ell_1(W)/(4m)$ as well as~$m \le k=3$ and~$\delta \le 1-\rho(k/2+1)- 9 \log(\log(n))/\log(n)$, 
note that the arguments leading to estimate~\eqref{eq:Janson:exponent:2} here imply (with room to spare)~that 
\begin{equation}\label{eq:Janson:exponent:2:m1}
\begin{split}
\frac{(a/k^2) \cdot (b/k^2)^{k-1} p^{\binom{k}{2}}}{k^2|W| \log n} & \ge \frac{1}{k^4 \log n} \cdot \frac{a}{|W|} \cdot \biggpar{ \frac{\ell_1(W)p^{k/2}}{4mk^2}}^{k-1}\\
& \gg \frac{1}{\log^{2}(n)} \cdot \biggpar{ \frac{n^{1-\delta}p^{k/2+1}}{\log^{2}(n)}}^{k-1}\\
& \ge \biggpar{ \frac{n^{1-\delta-\rho(k/2+1)}}{\log^{4}(n)}}^{k-1} \gg 1 .
\end{split}
\end{equation}
Combining~\eqref{eq:Janson:exponent:1:m1}--\eqref{eq:Janson:exponent:2:m1} with~\eqref{eq:Janson:exponent:m1:copy} and~$\nu = \Omega(1)$ then establishes the desired estimate~\eqref{eq:Janson:goal}, as claimed.

\subsection{Completing the proof of \refT{rem:max-clique}}
With the discussed minor modifications from Sections~\ref{sec:lem:density:copy} and~\ref{sec:main:rem:janson} in hand, 
now all remaining arguments in the proof of \refT{thm:max-clique} carry over unchanged (in the relevant case~${m=1}$). 
This completes the proof of \refT{rem:max-clique}. 
\qed

\section{Deferred standard proofs: edge-density arguments}\label{sec:deferred}
In this section we give the deferred routine proofs of Lemmas~\ref{lem:non-nbr:lb} and~\ref{lem:density} from Sections~\ref{sec:mainproof} and~\ref{sec:main}.
Both proofs are based on edge-density arguments, 
i.e., use Chernoff bounds as well as union bound and counting~arguments.

\subsection{Non-neighbors and neighbors: proof of \refL{lem:non-nbr:lb}}\label{sec:lem:non-nbr:lb}
\begin{proof}[Proof of \refL{lem:non-nbr:lb}]
We start with the first assertion about the number of mutual non-neighbors. 
Given a set~$S \subseteq V$ of size~$|S| \le s$, we denote by~$X_S$ the number of its mutual non-neighbors in~$\Gnp$.
Note that~$X_S$ has distribution $\Bin\bigpar{n-|S|,(1-p)^{|S|}}$. 
Since~$|S| \le s \le \delta\log_{1/(1-p)}(n)$ by definition~\eqref{def:s} of~$s$, it follows that 
\begin{equation}\label{eq:lem:non-nbr:lb:exp}
\E X_S = (n - |S|) \cdot (1-p)^{|S|} 
\geq (n-s)n^{-\delta} = n^{1-\delta}(1-s/n) .
\end{equation}
In view of~$s \le \delta \log(n)/[-\log(1-p)] \le \log(n)/p$, with foresight we~define
\[
\tau := \max\Biggcpar{\frac{2 \log n}{np}, \: \sqrt{\frac{32 \log^2(n)}{n^{1-\delta}p[1-\log(n)/(np)]}}} \ge \max\biggcpar{\frac{2s}{n}, \: \sqrt{\frac{32 s\log n}{n^{1-\delta}(1-s/n)}}} .
\]
Using the assumption~$n^{1-\delta}p \gg \log^2(n)$, it is routine to see that~$np  \ge n^{1-\delta}p \gg \log n$ as well as~$\tau=o(1)$ and~$s/n = o(1)$.   
Using a union bound argument and standard Chernoff bounds (such as~\cite[Theorem~2.1]{JLR}), 
the probability that we have~${X_S \le (1-\tau/2)\E X_S}$ for some set~$S \subseteq V$ of size~$|S| \le s$ is at~most
\begin{equation}\label{eq:lem:non-nbr:lb:chern}
\sum_{S \subseteq V: |S| \le s} \exp\biggpar{-\frac{(\tau/2)^2 \E X_S}{2}} \le n^{s+1} \cdot e^{- (1-o(1))\tau^2 n^{1-\delta}/8} \le n^{-2s} \ll 1 .
\end{equation}
Noting that~$(1-\tau/2)\E X_S \ge (1-\tau)n^{1-\delta}$ by~\eqref{eq:lem:non-nbr:lb:exp} and choice of~$\tau$, 
then establishes the first assertion of~\refL{lem:non-nbr:lb}. 

We now turn to the second assertion about the number of neighbors. 
Given a vertex~$v \in V$, we denote by~$X_v$ the number of its neighbors in~$\Gnp$.
Note that~$X_v$ has distribution~$\Bin(n-1,p)$, with~$\E X_v \le n \cdot p$. 
Using a union bound argument and standard Chernoff bounds (such as~\cite[Theorem~2.1]{JLR}), 
in view of~$np \ge n^{1-\delta}p \gg \log^2(n)$ we see that 
the probability that we have~${X_v \ge 2np}$ for some vertex~$v \in V$ is at~most, say, 
\begin{equation*}
\sum_{v \in V}\exp\biggpar{-\frac{np}{4}} \le n \cdot o(n^{-2}) \ll 1. 
\end{equation*}
This establishes the second assertion of~\refL{lem:non-nbr:lb}, completing the proof of \refL{lem:non-nbr:lb}. 
\end{proof}

\subsection{Pseudorandom properties: proof of \refL{lem:density}}\label{sec:lem:density}
\begin{proof}[Proof of \refL{lem:density}]
Let~$\cU$ denote the set of all~$U \subseteq V$ of size~$|U| \ge \ell_0/\log^2(n)$. 
We decompose 
\[\cD_{U,W} = \cD_{U,W,\ref{lem:density:i}} \cap \cD_{U,W,\ref{lem:density:ii}} \cap \cD_{U,W,\ref{lem:density:iii}},
\]
corresponding to the three properties of~$\cD_{U,W}$.
The crux is that if~$\cD_{U,W}$ fails for some set~$W \subseteq V$ and~$U \subseteq W$, then by edge-monotonicity also the event
\[
\cD_{U} := \cD_{U,\ref{lem:density:i}} \cap \cD_{U,\ref{lem:density:ii}} \cap \cD_{U,\ref{lem:density:iii}}
\]
fails,
where the definition of each~$\cD_{U,x}$ is the same as~$\cD_{U,W,x}$ except that the restriction `vertices in~$V \setminus W$' is replaced by `vertices in~$V \setminus U$'. 
It thus suffices to show that the probability that~$\cD_{U}$ fails for some~$U \in \cU$ has probability~$o(1)$, 
and in the following we shall deal with each property~$\cD_{U,x}$ separately. 

We first focus on~$\cD_{U,\ref{lem:density:i}}$.  
Note that if~$\cD_{U,\ref{lem:density:i}}$ fails, then for some integer~$i \ge 1$ with~$r_i p \le 1$ there is a set of vertices~${X \subseteq V \setminus U}$ of size~$|X|=\ceil{x_i}$ such that there are at least $\ceil{x_i} \cdot \ceil{r_i p |U|} \ge r_i \ceil{x_i} |U|p$ edges between~$X$ and~$U$. 
Since the number of these edges has distribution~$\Bin(\ceil{x_i}|U|,p)$, 
using Chernoff bounds for the upper tail (see~\cite[Theorem~2.1]{JLR}) and a standard union bound argument it follows~that 
\begin{equation}\label{eq:chernoff:1}
\Pr(\neg \cD_{U,\ref{lem:density:i}}) \le \sum_{i \ge 1: r_i p \le 1}\binom{n}{\ceil{x_i}} \cdot e^{-\phi(r_i-1) \ceil{x_i} |U|p} ,
\end{equation}
where $\phi(x) = (1+x)\log(1+x) - x$ is defined as above~\eqref{eq:param}. 
Taking all~$U \in \cU$ into account, 
using first assumption~\eqref{eq:density:1}, 
and afterwards that $\ell_0/\log^2(n) \ge \log^2(n)$ by~\eqref{ass:partition}
together with the observation that $r_ip \leq 1$ implies $i \le \log^{O(1)}(n)$ by~\eqref{eq:param}, 
it follows via a standard union bound argument~that 
\begin{equation}\label{eq:chernoff:1:UB}
\begin{split}
\Pr(\neg \cD_{U,\ref{lem:density:i}} \text{ for some~$U \in \cU$}) & \leq \sum_{\ell_0/\log^2(n) \le u \le n}\binom{n}{u} \cdot \sum_{i \ge 1: r_i p \le 1} n^{\ceil{x_i}}e^{-\phi(r_i-1) \ceil{x_i}up}\\ 
& \leq \sum_{\ell_0/\log^2(n) \le u \le n} \: \sum_{i \ge 1: r_i p \le 1}\biggsqpar{ \frac{ne}{u} \cdot n^{\ceil{x_i}/u} \cdot e^{-\phi(r_i-1) \ceil{x_i}p}}^{u}\\
& \leq \sum_{\ell_0/\log^2(n) \le u \le n} \:  \sum_{i \ge 1: r_i p \le 1} \biggsqpar{ \frac{n\log^2(n)e}{\ell_0} \cdot e^{-\ceil{x_i}\bigpar{\phi(r_i-1) p-\log^3(n)/\ell_0}}}^{u}\\
& \leq \sum_{\ell_0/\log^2(n) \le u \le n} \:  \sum_{i \ge 1: r_i p \le 1} e^{-u}
\leq n \cdot \log^{O(1)}(n) \cdot e^{-\log^2(n)} \ll 1.
\end{split}
\end{equation}

Next, the argument for~$\cD_{U,\ref{lem:density:ii}}$ is similar (but simpler) than for~$\cD_{U,\ref{lem:density:i}}$.
Indeed, replacing~$\ceil{x_i}$ and~$r_ip$ with~$m+1$ and~$\alpha$ in the Chernoff bound, 
using assumptions~\eqref{eq:density:2} and~\eqref{ass:partition} 
it follows similarly to~\eqref{eq:chernoff:1}--\eqref{eq:chernoff:1:UB} that
\begin{equation*}
\begin{split}
\Pr(\neg \cD_{U,\ref{lem:density:ii}} \text{ for some~$U \in \cU$}) & \leq \sum_{\ell_0/\log^2(n) \le u \le n} \binom{n}{u}\binom{n}{m+1} \cdot e^{-\phi(\alpha/p - 1) (m+1)up}\\
& \leq \sum_{\ell_0/\log^2(n) \le u \le n} \biggsqpar{ \frac{n\log^2(n)e}{\ell_0} \cdot e^{-(m+1)\bigpar{\phi(\alpha/p - 1)p-\log^3(n)/\ell_0}}}^{u} \\
& \leq \sum_{\ell_0/\log^2(n) \le u \le n} e^{-u} \leq n \cdot e^{-\log^2(n)} \ll 1.
\end{split}
\end{equation*}

Finally we turn to~$\cD_{U,\ref{lem:density:iii}}$, for which we shall use a completely different approach (using counting arguments and contradiction). 
Given a vertex~$v \in V$, we denote by~$\Gamma(v)$ the neighborhood of~$v$ in~$G_{n,p}$. 
Let~$\cN$ denote the event that~$X_{v,w}:=|\Gamma(v) \cap \Gamma(w)| \le 2np^2$ for all distinct vertices~$v,w \in V$. 
Note that~$X_{v,w}$ has distribution~$\Bin\bigpar{n-2,p^2}$, with~$\E X_{v,w} \le np^2$. 
Using a union bound argument and standard Chernoff bounds (such as~\cite[Theorem~2.1]{JLR}), 
in view of the lower bound~$np^2 \gg n^{1-4/5} \gg \log(n)$ it routinely follows that
\begin{equation*}
\begin{split}
\Pr(\neg \cN) \leq n^2 \cdot \exp\biggpar{-\frac{np^2}{4}}
\ll 1 .
\end{split}
\end{equation*}
Since~$\cN$ holds whp, it remains to deterministically argue that~$\cN$ implies the event~$\cD_{U,\ref{lem:density:iii}}$ for any~${1 \le j \le 9 \log(n)}$ and set~${U \subseteq V}$ of size~${|U| \ge \ell_0/\log^2(n)}$. 
Aiming at a contradiction, suppose that there are~$j+1$ distinct vertices~$v_1,v_2,\ldots,v_{j+1} \in V\setminus U$ 
that are each adjacent to at least~${\bigpar{1/(j+1) + 1/\log^3(n)} \cdot |U|}$ vertices of~$U$. 
With foresight, note that assumptions~\eqref{eq:p:small}--\eqref{eq:m1:delta} and~$\sigma = \Omega(1)$ imply (with room to spare) the upper bound
\begin{equation}\label{eq:density:3}
n^{\delta}p \le n^{\delta-\sigma} \le n^{-\sigma/2} \ll \log^{-7}(n). 
\end{equation} 
Since~$s \le \log (n)/p$ as in the proof of~\refL{lem:non-nbr:lb}, in view of the definition~\eqref{def:ell1} of~$\ell_1{=\ell_0(\emptyset)}$ we see that~$\ell_0 \ge (1-\tau)n^{1-\delta}p/\log(n)$. 
Using the codegree-property~$\cN$ and the lower bound~${|U| \ge \ell_0/\log^2(n)}$ together with~$\tau =o(1)$ 
and the upper bound~$n^{\delta}p \ll \log^{-7}(n)$ from~\eqref{eq:density:3},  
for any~${1 \le i \le j+1}$ it follows~that
\[
\frac{\sum_{1 \le \ell \le j+1: i \neq \ell}|\Gamma(v_i) \cap \Gamma(v_\ell)|}{|U|} \le \frac{j \cdot 2np^2}{\Omega\bigpar{n^{1-\delta}p/\log^3(n)}} \le O\bigpar{n^{\delta}p \log^4(n)} \ll 1/\log^3(n) . 
\]
Since~$|U \cap \Gamma(v_i)| \ge {|U|/(j+1)+|U|/\log^3(n)}$, we thus obtain a contradiction by noting~that 
\[
|U| \; \ge \; \Bigl|\bigcup_{1 \le i \le j+1} \bigpar{U \cap \Gamma(v_i)}\Bigr| \ge \sum_{1 \le i \le j+1}\Bigpar{|U \cap \Gamma(v_i)|- \sum_{1 \le \ell \le j+1: i \neq \ell}|\Gamma(v_i) \cap \Gamma(v_\ell)|} >  |U| ,
\]
which completes the proof of \refL{lem:density}, as discussed. 
\end{proof}

\section{Upper bounds on clique chromatic number}\label{sec:upper}
In this section we prove new upper bounds on the clique chromatic number~$\chi_c(\Gnp)$ of~$\Gnp$ for~$n^{-2/5} \ll p \ll 1$, 
which are used in the proofs of our main results \refT{thm:main1}--\ref{thm:main3}. 
Our first upper bound demonstrates that the typical asymptotics $\chi_c(\Gnp) = \bigpar{\tfrac{1}{2} + o(1)}  \log(n)/p$ for $n^{-o(1)} \le p \ll 1$ (see \refT{thm:main1} and \refR{rem:upper-bound}) do not extend to smaller edge-probabilities~$p=p(n)$. 
Indeed, \refL{lem:upper-bound} below shows that the leading constant must be smaller than~$1/2$ for~$n^{-2/5} \le p \le n^{-\Omega(1)}$, since then~$\rho =\log_n(1/p) = \Omega(1)$ in~\eqref{eq:upper:sparse} below. 
%
\begin{lemma}\label{lem:upper-bound}
Suppose that~$\omega=\omega(n) \gg 1$. 
If the edge-probability $p=p(n)$ satisfies~$n^{-1/2} \ll p \le \log^{-\omega}(n)$, then~whp 
\begin{equation}\label{eq:upper:sparse}
    \chi_c\bigpar{\Gnp} \: \le \:     \Bigpar{\tfrac{1}{2} - \rho\bigpar{\tfrac{1}{2}-\rho} + o(\rho)}  \log(n)/p.
\end{equation}
\end{lemma}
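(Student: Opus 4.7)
The plan is to adapt and sharpen the random-coloring argument that establishes the weaker bound $\chi_c(G_{n,p}) \le (1/2 + o(1))\log(n)/p$ in the range $n^{-o(1)} \le p \ll 1$ (cf.~\refR{rem:upper-bound}), exploiting the sparser regime $n^{-1/2} \ll p \ll \log^{-1}(n)$, where $\rho := \log_n(1/p) \in (0, 1/2)$ is bounded away from~$0$, so that subleading $\rho$-dependent corrections enter the leading constant.

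I would set $s := \lceil c L/p \rceil$ with $L := \log n$ and $c := 1/2 - \rho(1/2-\rho) + \eta$, where $\eta = \eta(n) = o(\rho)$ is a slowly-decaying slack parameter to be tuned, and take a uniformly random vertex-coloring $\chi\colon V(G_{n,p}) \to [s]$. Writing $X$ for the number of monochromatic maximal cliques of $G_{n,p}$ under $\chi$, the probabilistic method gives $\chi_c(G_{n,p}) \le s$ once we show $\Pr[X = 0] > 0$; the basic first-moment expression to control is
\[
\E[X] \;=\; \sum_{k \ge 2} \binom{n}{k} p^{\binom{k}{2}} (1-p^k)^{n-k} s^{1-k}.
\]
Following the template sketched in \refR{rem:upper-bound}, I would split $k$ into three ranges: for $k \le (1-\tau)/\rho$ (with $\tau = o(1)$ chosen appropriately) the factor $(1-p^k)^{n-k} \approx e^{-np^k}$ is super-polynomially small because $np^k \to \infty$, so these terms contribute $o(1)$; for $k \ge (2+\tau)/\rho$ the factor $\binom{n}{k}p^{\binom{k}{2}}$ decays below any polynomial; and the delicate middle range $k \in [(1-\tau)/\rho,\, (2+\tau)/\rho]$ is what pins down~$c$.

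The main obstacle lies in the middle-range analysis. I would write each term as $e^{\phi(k)}$, parametrize $\alpha := k\rho \in [1,2]$, and expand $\phi(k)$ via Stirling, carefully bookkeeping the subleading corrections coming from $\log s = \log c + \log L + \log(1/p)$, from $\log k!$, and from $(1-p^k)^{n-k}$. The critical balance sits near $\alpha = 1$ (i.e.\ $k \approx 1/\rho$, where $\binom{n}{k}p^{\binom{k}{2}}$ peaks), and the condition needed to complete the probabilistic-method argument --- possibly supplemented by a small alteration step or a concentration argument for $X$, in the spirit of the self-contained proof in \refR{rem:upper-bound} --- would translate, after matching terms of order $\rho L$, to the bound $c \le 1/2 - \rho(1/2-\rho) + o(\rho)$. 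The quadratic-in-$\rho$ correction $-\rho(1/2-\rho)$, rather than a weaker linear one, arises from the interplay between the $\alpha$-optimum and the sparsity assumption $\rho < 1/2$.

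The $o(\rho)$ slack $\eta$ in $c$ would absorb the various lower-order contributions uniformly for $\rho$ in the allowed range, and subsidiary details --- verifying the required inequalities for all large~$n$, handling the rounding of $k$ and $s$ to integer values, and calibrating $\tau = \tau(n) = o(1)$ so that the small- and large-$k$ tails fit inside the $\eta$ budget --- are routine, in the spirit of the parameter bookkeeping carried out in \refS{sec:choices}.
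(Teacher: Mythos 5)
There is a fundamental gap: the uniformly random coloring you start from cannot work in this range of $p$, so the first-moment computation you build the proof on does not go through for any choice of the $o(\rho)$ slack. With $s=\Theta(\log(n)/p)$ colors and $p=n^{-\rho}$, the dominant contribution to your sum comes from maximal cliques of size $k^*=\lceil \log n/\log(1/p)\rceil\approx 1/\rho$ (the peak of $\binom nk p^{\binom k2}$ subject to $np^k=O(1)$, where $(1-p^k)^{n-k}=\Omega(1)$). Their expected number is $n^{(1+o(1))/(2\rho)}$, while the probability that a fixed such clique is monochromatic is only $s^{-(k^*-1)}=n^{-(1-\rho)+o(1)}$; since $1/(2\rho)>1-\rho$ for every $\rho<1/2$, the product is $n^{\Omega(1)}$. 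Concretely, for $p=n^{-2/5}$ your $\E[X]$ is at least the expected number of monochromatic maximal triangles, which is $\Theta\bigl(n/\log^2 n\bigr)\to\infty$, and a routine second-moment argument shows such monochromatic maximal triangles in fact exist whp under a uniform random coloring. So the failure is not a matter of bookkeeping subleading terms or adding a "small alteration step": the random-coloring scheme itself produces monochromatic maximal cliques whp, and no tuning of $c=1/2-\rho(1/2-\rho)+o(\rho)$ can repair a polynomially exploding first moment. (You also misread \refR{rem:upper-bound}: the self-contained upper bound there is not a random-coloring argument.)

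The paper's proof of \refL{lem:upper-bound} is structurally different and is designed precisely to avoid this obstruction. It colors greedily: for $s\approx\delta\log_{1/(1-p)}(n)$ vertices $v_1,\dots,v_s$, class $i$ is a subset of $N(v_i)$ while $v_i$ itself gets another color, so these classes can never contain a clique that is maximal in $\Gnp$ — deterministically, no probability is spent on them. Only the leftover set $N$ of common non-neighbors, of size $O(n^{1-\delta})$, needs a probabilistic argument; it is split into $z=\Theta(1/p)$ further classes $N_i$ of size at most $n^{1-\delta}p$, and the first moment is taken only over cliques inside a single small $N_i$, i.e., over $\binom{|N_i|}{k}p^{\binom k2}(1-p^k)^{n-|S|-|N_i|}$ with $|N_i|\ll n$. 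That restricted count is genuinely $o(1)$ once $\delta\ge 1/2-\rho/2+\rho^2+o(\rho)$, which together with the $z=o(\rho\log(n)/p)$ extra colors yields the constant $1/2-\rho(1/2-\rho)+o(\rho)$. If you want to salvage your write-up, you need to replace the uniform random coloring by such a structured construction (or some other mechanism that excludes the abundant size-$\approx 1/\rho$ maximal cliques from ever being monochromatic) before any first-moment calculation can be decisive.
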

\begin{remark}\label{rem:upper-bound} 
The proof 
implies that whp $\chi_c(\Gnp) \le \bigpar{\tfrac{1}{2} + o(1)}  \log(n)/p$ when~$n^{-1/2} \ll p \ll 1$.  
\end{remark}
Our second upper bound demonstrates that $\chi_c(\Gnp)=o(\log(n)/p)$ for~$p=n^{-2/5+o(1)}$, 
which is surprising in view of previous bounds and speculations~\cite{AK2018,MMP2019,LMW2023,DZ2023} for~$p \gg n^{-2/5}  (\log n)^{3/5}$. 
In fact, the upper bound~\eqref{eq:upper:sparse-1} of  \refL{lem:upper-bound-epsilon-1} is best possible for $(\log n)^{\omega}n^{-2/5} \leq p \ll n^{-1/3}$, see the proof of \refT{thm:main3} in~\refS{sec:mainproof}. 
\begin{lemma}\label{lem:upper-bound-epsilon-1}
If the edge-probability $p=p(n)$ satisfies~$p = n^{-2/5+\eps}$ with $2 n^{-2/5} \le p \le n^{-1/9}$, then~whp 
\begin{equation}\label{eq:upper:sparse-1}
    \chi_c\bigpar{\Gnp} \: \le \:     
    \bigpar{\tfrac{5}{2} + o(1)} \eps\log(n)/p.
\end{equation}
\end{lemma}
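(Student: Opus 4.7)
The plan is to bootstrap the existing upper bounds from~\eqref{eq:previous} via a dominating-set construction. The idea is that after covering most of~$V$ with neighborhoods of a carefully sized random ``dominator'' set~$D$, the remaining ``leftover'' subgraph~$G[L]$ falls into the middle edge-probability range of~\eqref{eq:previous}, where the bound $O(p^{3/2}|L|/\sqrt{\log|L|})$ is already of lower order than $\log(n)/p$.

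Concretely, I would pick a random set~$D\subseteq V$ of size $|D|=\lceil(5\eps/2+\delta)\log(n)/p\rceil$, with $\delta=\delta(n)\to 0$ a small positive slack (say $\delta=1/\log\log n$), and set $L:=\{v\in V\setminus D\,:\,\Gamma(v)\cap D=\emptyset\}$. By Chernoff, whp $|L|\le n^{1-5\eps/2-\delta+o(1)}$, since each non-dominator is orphaned with probability $(1-p)^{|D|}=(1+o(1))n^{-(5\eps/2+\delta)}$. I then assign to each $v\in V\setminus(L\cup D)$ a color $c(v)\in D\cap\Gamma(v)$ (nonempty by definition of~$L$); this uses $|D|$ colors, and each color class $\{v:c(v)=d\}$ lies in~$\Gamma(d)$ with~$d$ external to the class, hence contains no inclusion-maximal clique of~$G_{n,p}$ (every clique in the class is extendable by~$d$).

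It remains to handle $D\cup L$ using $o(\log(n)/p)$ extra colors with a disjoint palette. Since there are no edges between~$D$ and~$L$ by construction, any maximal clique of $G_{n,p}$ inside $D\cup L$ lies entirely in~$D$ or entirely in~$L$, so I can color $G[D]$ and $G[L]$ separately with disjoint fresh palettes. Conditional on the $D$-incident edges, $G[D]\sim G(|D|,p)$ and $G[L]\sim G(|L|,p)$. For $G[L]$: a short computation shows $p\le|L|^{-2/5-\eta}$ with $\eta\approx 2\delta/5>0$, and $p\ge|L|^{-1/2+\eta'}$ for some~$\eta'>0$ (using $p\le n^{-1/9}$), so the middle case of~\eqref{eq:previous} applies and yields $\chi_c(G[L])=O(p^{3/2}|L|/\sqrt{\log|L|})=O(n^{-\delta}/(p\sqrt{\log n}))=o(\log(n)/p)$. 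For $G[D]$ one has $p\ll|D|^{-1/2}$, so either the first case of~\eqref{eq:previous} gives $\chi_c(G[D])=O(|D|p/\log(|D|p))=O(\eps\log(n)/\log\log n)$, or when $|D|p=O(1)$ the trivial bound $\chi_c(G[D])\le|D|$ suffices; both are~$o(\log(n)/p)$. Summing, the total color count is $|D|+\chi_c(G[D])+\chi_c(G[L])=(5\eps/2+o(1))\log(n)/p$.

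The main obstacle is the parameter tuning: the slack~$\delta$ must be positive enough that~$|L|$ lies strictly inside the middle range of~\eqref{eq:previous} (so $p\le|L|^{-2/5-\eta}$ with a workable $\eta>0$), yet small enough that $\delta\log(n)/p$ is absorbed into the $(5\eps/2+o(1))$ main term. The upper constraint $p\le n^{-1/9}$ provides room on the side $p\ge|L|^{-1/2+\eta'}$, while the lower constraint $p\ge 2n^{-2/5}$ ensures $\eps=\Omega(1/\log n)$ so that $|D|\ge 1$ and $(5\eps/2)\log(n)/p$ is the dominant term. A further technical point is to check that~\eqref{eq:previous} (stated for fixed $\eta>0$) extends to $\eta=\eta(n)\to 0$ slowly enough; this is expected to follow from a direct inspection of the MMP upper-bound proof in~\cite{MMP2019}, and may also be circumvented by choosing $\delta$ to decay just slowly enough that $\eta\approx 2\delta/5$ remains above any specific rate required.
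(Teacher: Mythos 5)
Your construction is essentially the paper's proof in different clothing: both approaches spend roughly $(5\eps/2)\log(n)/p$ colors on neighborhoods so that the uncovered leftover set has about $n^{1-5\eps/2}$ vertices, and then exploit the non-monotone dip of the clique chromatic number by applying the middle-range upper bound of~\cite{MMP2019} to the leftover induced random graph, which costs only $o(\log(n)/p)$ further colors. The differences are implementation-level: the paper colors greedily along a fixed vertex ordering (so the still-uncolored dominating vertices form an independent set and need just one extra color, rather than your recursive clique coloring of $G[D]$), and, crucially for your flagged caveat, it takes the exponent to be exactly $5\eps/2$ with no slack, so that $p=\Theta(|N|^{-2/5})$ for the leftover set~$N$, and then invokes the explicit Theorem~3.1 of~\cite{MMP2019} (an upper bound of the form $2|N|p^{3/2}/\sqrt{\log|N|}$, valid at this corner point), thereby avoiding any need to extend the fixed-margin statement~\eqref{eq:previous} to margins $\eta=\eta(n)\to 0$. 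So your one acknowledged loose end is real as written but disappears once you cite that explicit theorem instead of the $\Theta$-form summary~\eqref{eq:previous}. The only other small repair: your check $p\ge|L|^{-1/2+\eta'}$ needs a lower bound on~$|L|$, so you should also record the matching Chernoff bound $|L|\ge\tfrac12\E|L|$ (or observe that a smaller~$|L|$ only pushes $G[L]$ into regimes where even fewer colors suffice), exactly as the paper records $\tfrac12 n^{1-\delta}\le|N|\le 2n^{1-\delta}$.
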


The proofs of \refL{lem:upper-bound} and~\ref{lem:upper-bound-epsilon-1} are given in the next two subsections, and they both follow similar
two-step approaches to construct a valid vertex coloring of~$\Gnp$ without monochromatic inclusion-maximal cliques. 
Aiming at roughly~$\chi_c(\Gnp) \le \delta \log(n)/p$ in~\eqref{eq:upper:sparse} and~\eqref{eq:upper:sparse-1} for suitable~$\delta>0$, 
in the first step we use a simple greedy approach to color the vertices of~$\Gnp$ using~$\delta \log (n)/p$ colors, until a set~$N$ of $|N|=\Theta(n^{1-\delta})$ uncolored vertices is left.
In the second step we then use a lemma-specific argument to color the remaining vertices in~$N$ using a negligible number of additional new colors 
(compared to the number of colors used in the first~step). 

\subsection{Leading constant less than~$1/2$: proof of \refL{lem:upper-bound}}\label{sec:upper-bound}
In the below proof of \refL{lem:upper-bound}, in the second coloring step we 
partition the set~$N=N_1 \cup \cdots \cup N_z$ into~$z=\Theta(1/p)$ parts, 
and then show that whp there are no cliques in the induced subgraphs~$\Gnp[N_i]$ that are inclusion maximal with respect to~$\Gnp$. 
By giving all vertices in each~$N_i$ one new color, it turns out that we obtain a valid coloring of the remaining vertices in~$N$ using~$z=o(\rho \log(n)/p)$ many additional new~colors. 
\begin{proof}[Proof of \refL{lem:upper-bound} and \refR{rem:upper-bound}]
Recalling that~$\rho =\log_n(1/p)$, with foresight we define
\[
s := \Bigfloor{\delta \log_{\tfrac{1}{1-p}}(n)}, 
\qquad
\delta := \frac{1}{2} - \frac{\rho}{2} + \frac{\rho^2}{1-\lambda} + \lambda, 
\qquad 
\lambda := \frac{6\log \log(n)}{\log(n)}
\quad \text{ and } \quad
z := \biggceil{\frac{4}{p}} .
\]
To complete the proof, we claim that it suffices to show that if~${n^{-1/2} \ll p \ll 1}$, then whp 
\begin{equation}\label{eq:upper:sparse:refined}
\chi_c(\Gnp) \le {s+z+1} .
\end{equation}
Indeed, this bound implies estimate~\eqref{eq:upper:sparse}, since~$p \le \log^{-\omega}(n)$ ensures $\log_{1/(1-p)}(n) = (1+O(p))\log(n)/p$ and~$\max\{p,\lambda\} \ll \log(1/p)/\log(n) = \rho$ as well as~$z \ll \log(1/p)/p = \rho \log(n)/p$.
This bound also implies the estimate of \refR{rem:upper-bound}, since~$\log^{-\omega}(n) \leq p \ll 1$ ensures~$\rho,\lambda = o(1)$ and $\log_{1/(1-p)}(n) = (1+o(1))\log(n)/p$.

We now turn to the proof of~\eqref{eq:upper:sparse:refined}.   
Our approach is to color the vertices of~$\Gnp$ with the colors~$\{1, \ldots, s+z+1\}$ using the following procedure, 
where we fix an ordering~$v_1, \ldots, v_n$ of all vertices (say using lexicographic ordering). 
First we sequentially consider the vertices $v_1, \ldots, v_s$, and each time color all so far uncolored vertices in~$N(v_i)$ with color~$i$.
Then we color all so far uncolored vertices in~$\{v_1, \ldots, v_s\}$ with color~$s+1$, 
so that the set of all so far  uncolored vertices~equals
\[
N: = V \setminus \Bigpar{S \cup \bigcup_{v_i \in S} N(v_i) } \qquad \text{ with } \qquad S:=\{v_1, \ldots, v_s\} .
\]
Using the ordering of the vertices fixed above, we then partition~$N$ into disjoint sets 
\[
N = N_1 \cup \cdots \cup N_{z} \qquad \text{ with } \qquad |N_i| \le 2 |N|/z,
\]
and, for each~$1 \le i \le z$, then color all vertices in~$N_i$ with color~$s+1+i$.

Let us collect some basic properties of the resulting coloring.
First we record that, by a routine argument\footnote{We can reuse the setup from the proof of \refL{lem:non-nbr:lb} in \refS{sec:lem:non-nbr:lb} by noting that $|N| = X_S$ holds, where~$S \subseteq V$ has size~$|S|=s$.  
Using a standard union bound argument, we thus have 
$\Pr(|N| \ge 2 n^{1-\delta}) \le {\sum_{S \subseteq V: |S|=s}\Pr(X_S \ge 2 n^{1-\delta})}$. 
Since $n^{1-\delta}p \gg (\log n)^2$ implies~$\E X_S = n^{1-\delta}(1+o(1)) \gg \log(n)/p \ge s$ in~\eqref{eq:lem:non-nbr:lb:exp}, 
standard Chernoff bounds imply $\Pr(X_S \ge 2 n^{1-\delta}) \le \exp\bigpar{-\Theta(\E X_S)} \ll n^{-3s}$, say, 
which analogous to~\eqref{eq:lem:non-nbr:lb:chern} yields $\Pr(|N| \ge 2 n^{1-\delta}) \le n^s \cdot n^{-3s} = n^{-2s} \ll 1$, as~desired.} 
analogous to the non-neighbors argument around~\eqref{eq:lem:non-nbr:lb:exp}--\eqref{eq:lem:non-nbr:lb:chern} in \refS{sec:lem:non-nbr:lb} (exploiting that~$n^{1-\delta}p \gg \log^2(n)$ holds), 
we obtain that whp $|N| \le 2 n^{1-\delta}$. 
%
Next, note that every vertex colored~$i \le s$ is adjacent to vertex~$v_i$, which itself has a different color than~$i$ (either color~$s+1$ or a color that is less than~$i$).
Furthermore, the set of vertices colored~$s+1$ form an independent set.
From this we deduce the following: if~$\Gnp$ contains an inclusion-maximal monochromatic clique~$K$, then its color must be in~$\{s+2, \ldots, s+z+1\}$, 
which means that~$K$ must be contained in some set~$N_i$. 
To complete the proof, it thus suffices to show that whp no~$N_i$ contains an inclusion-maximal~clique. 

To this end we expose the status of potential edges of~$\Gnp$ in two rounds:
in the first round we expose the edge-status of all vertex pairs containing at least one vertex from~$S$ (which contains enough information to determine~$N$), 
and in the second round we expose the edge-status of all remaining vertex pairs.
We henceforth condition on the outcome of the first exposure round, and assume that~$|N| \le 2 n^{1-\delta}$ (since this holds whp). 
As usual, to avoid clutter, we shall omit this conditioning from out notation. 
Fix~$N_i$ with~$1 \le i \le z$. We infer~that
\[
|N_i| \le 2|N|/z \le n^{1-\delta} p.
\]
Note that if~$K \subseteq N_i$ is an inclusion-maximal clique in~$\Gnp$, then $K$ is also an inclusion-maximal clique in~${\Gnp[V \setminus S]}$. 
Denoting by~$X_{i,k}$ the number of inclusion-maximal cliques~$K \subseteq N_i$ of size~$|K|=k$ in~${\Gnp[V \setminus S]}$, 
writing~$X_i:={\sum_{2 \le k \le |N_i|}X_{i,k}}$ it therefore routinely follows~that  
\begin{equation}\label{eq:pr:Si}
\Pr(\text{$N_i$ contains an inclusion-maximal clique}) \le \Pr(X_i \ge 1) \le \sum_{2 \le k \le |N_i|} \E X_{i,k}.
\end{equation}
Note that after conditioning on the outcome of the first exposure round, all potential edges inside~$V \setminus S$ are still included independently with probability~$p$. 
Hence standard random graph estimates give, in view of~$|S|=s = o(n)$ and~$k \le |N_i| \le n^{1-\delta}p = n^{1-\delta-\rho/2} p^{1/2}  = o(n)$, that, say,  
\begin{align*}
\E X_{i,k} & = \binom{|N_i|}{k}p^{\binom{k}{2}}\bigpar{1-p^k}^{n-|S|-k} \le \Bigpar{en^{1-\delta}p/k \cdot  p^{(k-1)/2} \cdot e^{-(1-o(1))p^kn/k}}^k.
\end{align*}
We now proceed by several case distinctions.
First we assume that~$p^kn \ge 4 k \log(n)$ holds, where it follows~that 
\begin{equation}\label{eq:EXik:1}
\E X_{i,k} \le \Bigpar{ene^{-(1-o(1))p^kn/k}}^k \le n^{-k} .
\end{equation}
We henceforth assume that~$p^kn < 4 k \log(n)$ holds, where in view of~$p=n^{-\rho}$ we see~that 
\begin{align*}
\E X_{i,k} \le \Bigpar{en^{1-\delta}\cdot  p^{1/2 + k/2}/k}^k \le \Bigpar{e n^{1-\delta} p^{1/2} \sqrt{4\log(n)/(nk)}}^k \le \Bigpar{e^2 n^{1/2-\delta-\rho/2}\sqrt{\log(n)/k}}^k .
\end{align*}
In particular, 
if~$p^kn < 4 k \log(n)$ and~$k \ge e^{6} \log n$ both hold, 
then using~$\delta \ge 1/2-\rho/2$ we see~that 
\begin{equation}\label{eq:EXik:2}
\E X_{i,k} \le \Bigpar{e^{-1}n^{1/2-\delta-\rho/2}}^k \le e^{-k} .
\end{equation}
We now consider the remaining case when~$p^kn < 4 k \log(n)$ and~$k < e^{6} \log n$ both hold,
where we have $p^k n < 4 k \log(n) \ll \log^3(n)$ and thus $k \ge \log_{1/p}(n/\log^3(n)) \ge (1-\lambda)/\rho$, say. 
Using the form of~$\delta$ and~$n^{-\lambda} = \log^{-6}(n)$ together with~$k \ge (1-\lambda)/\rho$ and~$n^{-\rho}=p$, 
we then infer~that 
\begin{equation}\label{eq:EXik:3}
\E X_{i,k} \le \Bigpar{e^2 n^{1/2-\delta-\rho/2}\sqrt{\log(n)}}^k \le \Bigpar{n^{-\rho^2/(1-\lambda)-\lambda/2}}^k \le n^{-\rho} n^{-\lambda k /2} \le p/ \log^{k}(n) .
\end{equation}
To sum up, combining the estimates~\eqref{eq:EXik:1}, \eqref{eq:EXik:2} and~\eqref{eq:EXik:3} it follows that
\begin{align*}
\sum_{2 \le k \le |N_i|} \E X_{i,k} \le \sum_{k \ge 2}\Bigsqpar{n^{-k} + e^{-k}\indic{k \ge e^{6} \log n}  + p/\log^{k}(n)} \le o(n^{-1}) + o(p).  
\end{align*}
Combining this estimate with~\eqref{eq:pr:Si} and~$z = O(1/p) \ll n$ then shows that 
\[
\Pr(\text{some $N_i$ with~$1 \le i \le z$ contains an inclusion-maximal clique}) \le z \cdot \Bigsqpar{o(n^{-1}) + o(p)} = o(1) ,
\]
which completes the proof of Lemma~\ref{lem:upper-bound}, as discussed. 
\end{proof}

\subsection{Adaptive leading constant: proof of \refL{lem:upper-bound-epsilon-1}}\label{sec:lem:upper-bound-epsilon-1}
The below proof of \refL{lem:upper-bound-epsilon-1} uses a similar first coloring step as \refL{lem:upper-bound}, 
but in the second step we focus on the induced subgraph~$\Gnp[N]$ and argue as follows:
since~$\Gnp[N]$ has the same distribution as~$G_{|N|,p}$, by exploiting the non-monotone behavior of~$\chi_c(\Gnp)$ around~${p=n^{-2/5}}$ (see~\cite{MMP2019}) it turns out that whp~$\chi_c\bigpar{\Gnp[N]} = o(\eps \log(n)/p)$. 
Since inclusion-maximal cliques in~$\Gnp$ are also inclusion-maximal in~$\Gnp[N]$, 
we thus obtain a valid coloring of the remaining vertices in~$N$ using at most~$o(\eps \log(n)/p)$ many additional new~colors. 
\begin{proof}[Proof of \refL{lem:upper-bound-epsilon-1}]
With foresight we define
\[
s := \Bigfloor{\delta \log_{\tfrac{1}{1-p}}(n)}, 
\qquad
\delta := \frac{5\eps}{2}
\quad \text{ and } \quad
z := \biggceil{\frac{8}{p \sqrt{\log n}}} .
\]
Note that~$\eps \ge \log(2)/\log(n)$. Since~$p \le n^{-1/9}$ ensures $\log_{1/(1-p)}(n) = (1+O(p))\log(n)/p$ and~$p \ll \eps$ as well as~$z \ll \log(2)/p \le \eps \log (n)/p$, 
to complete the proof of~\eqref{eq:upper:sparse-1} it suffices to show that whp $\chi_c(\Gnp) \le {s+z+1}$. 

Similar to \refL{lem:upper-bound}, our approach is to color the vertices of~$\Gnp$ with the colors~$\{1, \ldots, s+z+1\}$ using the following procedure, 
where we fix an ordering~$v_1, \ldots, v_n$ of all vertices (say using lexicographic ordering). 
First we sequentially consider the vertices $v_1, \ldots, v_s$, and each time color all so far uncolored vertices in~$N(v_i)$ with color~$i$.
Then we color all so far uncolored vertices in~$\{v_1, \ldots, v_s\}$ with color~$s+1$, 
so that the set of all so far  uncolored vertices~equals
\[
N: = V \setminus \Bigpar{S \cup \bigcup_{v_i \in S} N(v_i) } \qquad \text{ with } \qquad S:=\{v_1, \ldots, v_s\} .
\]
Subsequently, our plan is to color the vertices in~$N$ with colors from the set $\{s+2,\cdots,s+z+1\}$, so that $G_{n,p}[N]$ contains no monochromatic inclusion-maximal clique. 
By an argument analogous to the one used in the proof of \refL{lem:upper-bound}, we deduce the following: if~$\Gnp$ contains a monochromatic inclusion-maximal clique~$K$, then~$K$ must be contained in the set~$N$. To complete the proof, it thus suffices to show that whp~$\chi_c\bigpar{\Gnp[N]} \le z$. 

To this end we expose the status of potential edges of~$\Gnp$ in two rounds:
in the first round we expose the edge-status of all vertex pairs containing at least one vertex from~$S$ (which contains enough information to determine~$N$), 
and in the second round we expose the edge-status of all remaining vertex pairs.
We henceforth condition on the outcome of the first exposure round, and assume that~$\tfrac{1}{2}n^{1-\delta} \le |N| \le 2 n^{1-\delta}$ (since this holds whp, by a routine argument analogous to  
the proof \refL{lem:non-nbr:lb}, as discussed in \refS{sec:upper-bound}). 
Note that after conditioning on the outcome of the first exposure round, all potential edges inside~$N$ are still included independently with probability~$p$, 
which implies that~$G_{n,p}[N]$ has the same distribution as~$G_{|N|,p}$. 
Since~$\delta=5\eps/2$, $p \le n^{-1/9}$ and $\tfrac{1}{2}n^{1-\delta} \le |N| \le 2 n^{1-\delta}$ imply~$p=n^{-2/5+\eps}=\Theta(|N|^{-2/5})$ and~$|N| = \Theta(1/p^{5/2}) \gg n^{1/4}$, by invoking Theorem~3.1 in \cite{MMP2019} (which for $p = \Theta(n^{-2/5})$ implies that whp $\chi_c(G_{n,p}) \le 2 n p^{3/2}/\sqrt{\log n}$) it then follows that, whp, 
\[
\chi_c\bigpar{G_{n,p}[N]} \leq \frac{2 \cdot |N| \cdot p^{3/2}}{\sqrt{\log |N|}} \leq \frac{2 \cdot 2n^{1-5\eps/2} \cdot n^{-3/5 + 3\eps/2}}{\sqrt{\tfrac{1}{4}\log n}} = \frac{8n^{2/5-\eps}}{\sqrt{\log n}} = \frac{8}{p \sqrt{\log n}} \le z,
\]
which completes the proof of \refL{lem:upper-bound-epsilon-1}, as discussed. 
\end{proof}

\section{Concluding remarks and open problems}\label{sec:concluding}
We close this paper with some remarks and open problems concerning the 
clique chromatic number~$\chi_c(\Gnp)$ of the binomial random graph~$\Gnp$ with edge-probability~$p=p(n)$. 

{\vspace{-0.125em}
\begin{itemize}[leftmargin=1.5em] 
\itemsep 0.5em \parskip 0.125em  \partopsep=0.125em \parsep 0.125em  
\item{\bf Order of magnitude.}  
The main remaining open problem for the clique chromatic number is to determine the typical order of magnitude of~$\chi_c(\Gnp)$ when~$p=p(n)$ is close to~$n^{-2/5}$ and~$n^{-1/3}$ 
(cf.~\refR{rem:np13:improvement}). 
\begin{problem}
Fix~$\eps>0$. Determine the whp order of magnitude of~$\chi_c(\Gnp)$ when~$n^{-2/5-\eps} \ll p \ll n^{-2/5+\eps}$ and~$\tfrac{1}{3}n^{-1/3} \le p \ll n^{-1/3+\eps}$. 
\end{problem}
These two ranges of~$p=p(n)$ are of particular interest, because around~${p=n^{-2/5}}$ and~${p=n^{-1/3}}$ there seems to be a phase transition in the structure of the valid colorings:
indeed, around these points the optimal lower bound strategies appear to change (in terms of which clique size matters), which makes the proof approaches from~\cite{MMP2019,LMW2023} run into technical difficulties. 
To further illustrate our limited understanding, we remark that a conjecture from~\cite[Section~5]{LMW2023} predicts that whp $\chi_c(\Gnp) = \Theta(\log(n)/p)$ for~$n^{-2/5}  (\log n)^{3/5} \ll p \ll 1$, 
whereas \refL{lem:upper-bound-epsilon-1} demonstrates that this conjecture is not always correct: 
indeed, \eqref{eq:upper:sparse-1} implies that whp $\chi_c(\Gnp) = o(\log(n)/p)$ for~$n^{-2/5} \ll p \le n^{-2/5+o(1)}$. 
It may be possible that the aforementioned conjecture from~\cite{LMW2023} is also incorrect around~$p=n^{-1/3}$. 
To stimulate more research into this intriguing possibility, we record that a simple modification of the proof of \refT{rem:max-clique} from \refS{sec:main:rem} yields the following lower bound. 
%
%
\begin{corollary}\label{lem:max-clique:3}
Suppose that~$\omega=\omega(n) \gg 1$. If~$p=p(n)$ satisfies~$(\log n)^{\omega}n^{-1/3} \leq p \ll n^{-1/3.75}$, then~whp
\begin{equation}\label{eq:maxclique:3}
    \chi_c\bigpar{\Gnp} \: \ge \:     \bigpar{3 +o(1) }\log\bigpar{n^{1/3}p}/p .
\end{equation}
\end{corollary}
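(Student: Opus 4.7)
The plan is to mirror the proof of \refR{rem:max-clique} from \refS{sec:main:rem}, bumping the parameter $k$ from $3$ to $4$. Writing $p = n^{-1/3+\eps}$ for $\eps = \eps(n) > 0$, the assumed range translates to $\omega \log\log(n)/\log(n) \le \eps$ and $\rho := \log_n(1/p) = 1/3 - \eps \in (4/15,\, 1/3)$, so the parameter choice~\eqref{eq:m1:km} from~\refS{sec:param:choices} gives $m = 1$ and $k = \lceil 1/\rho \rceil = 4$. In place of the conservative choice~\eqref{eq:m1:delta}, I would set
\begin{equation*}
\delta := \min\bigcpar{1 - 3\rho,\ \rho} - \frac{9 \log\log(n)}{\log(n)} = 3\eps - \frac{9 \log\log(n)}{\log(n)} = (1+o(1)) \cdot 3\eps ,
\end{equation*}
where the last equality uses $\eps \gg \log\log(n)/\log(n)$, guaranteed by $\omega \gg 1$.

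The key step is to re-verify the Janson exponent bound~\eqref{eq:Janson:goal} from~\refS{sec:main:rem:janson} with these parameters. Since $m = 1$, the improved estimate~\eqref{eq:Janson:exponent:m1} from~\refR{rem:Janson} applies; mimicking the chain~\eqref{eq:Janson:exponent:1:m1}--\eqref{eq:Janson:exponent:2:m1} with $k=4$, the first term gives $\gg n^{1-\delta-2\rho}/\log^{O(1)}(n) \gg 1$ (since $1-\delta-2\rho \ge \rho > 4/15$), while the dominant second term gives
\begin{equation*}
\gg \bigpar{n^{1-\delta-\rho(k/2+1)}/\log^{4}(n)}^{k-1} = \bigpar{n^{9\log\log(n)/\log(n)}/\log^{4}(n)}^{3} = (\log n)^{15} \gg 1 ,
\end{equation*}
where the exponent $1-\delta-\rho(k/2+1) = 1-\delta-3\rho = 9\log\log(n)/\log(n)$ is precisely what my choice of $\delta$ arranges.

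Next I would verify that the remaining assertions of~\refL{lem:choices} continue to hold for this new $\delta$. The bound~\eqref{eq:l1W:lower_w_choice:goal} becomes $n^{1-\delta}p \ge n^{2\rho} \ge n^{8/15} \gg n^{1/2}$; the density bound $n^{\delta}p \ll \log^{-7}(n)$ used in~\eqref{eq:density:3} follows from $n^{\delta-\rho} \le n^{1-4\rho} \le n^{-1/15}$ since $\rho > 4/15$; and the bound~\eqref{ass:counting} on $\Lambda = \Lambda_0 + \Pi_{1/\log(n)} + 0.7$ still yields $\Lambda < 0.9$, because (i)~the estimate~\eqref{eq:def:Lambda0:1}--\eqref{eq:def:Lambda0:2} for $\Lambda_0$ requires only $\rho\lceil 1/\rho\rceil - 1 = 4\rho - 1 > 1/15 = \Omega(1)$, and (ii)~the estimate~\eqref{eq:Pi:alpha:3} for $\Pi_{1/\log(n)}$ with $k-m-1 = 2$ gives $\Pi_{1/\log(n)} = O(1/\log^{2}(n)) = o(1)$.

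The main obstacle I anticipate is purely bookkeeping: ensuring that the $O(\log\log(n)/\log(n))$ additive error in $\delta$ is genuinely dominated by $\eps$ (which is where the slack $\omega \gg 1$ enters), so that the final leading constant is exactly $3+o(1)$. With the above verifications in place, the remainder of the argument copies the proof of~\refT{thm:main3} in~\refS{sec:mainproof} verbatim: one deduces whp $\chi_c(\Gnp) \ge \delta \log_{1/(1-p)}(n) = (1-o(1))\delta\log(n)/p = (3+o(1))\eps\log(n)/p = (3+o(1))\log(n^{1/3}p)/p$, using $\log(n^{1/3}p) = \eps \log(n)$.
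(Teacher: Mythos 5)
Your proposal is correct and is exactly the modification the paper has in mind: for $(\log n)^{\omega}n^{-1/3}\le p\ll n^{-1/3.75}$ one has $m=1$, $k=\lceil 1/\rho\rceil=4$, and replacing the choice~\eqref{eq:m1:delta} by $\delta=\min\{1-\rho(k/2+1),\rho\}-9\log\log(n)/\log(n)=(1+o(1))3\eps$, with the Janson exponent re-verified via \refR{rem:Janson} and the remaining constraints of \refL{lem:choices} rechecked just as you do. Your parameter checks (in particular $1-\delta-2\rho\ge\rho>4/15$, $n^{\delta-\rho}\le n^{1-4\rho}\le n^{-1/15}$ for the density bound, and $\Lambda_0+\Pi_{1/\log(n)}+0.7<0.9$) match the paper's intended argument, so no gap.
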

It would be desirable to know if there is an upper bound on $\chi_c(\Gnp)$ which matches~\eqref{eq:maxclique:3}, 
since this could potentially lead to a variant of \refT{thm:main3} for a range of suitable of edge-probabilities~$p \ge (\log n)^{\omega}n^{-1/3}$. 
\item{\bf Asymptotics.} 
Another major open problem for the clique chromatic number is to determine the typical asymptotics of~$\chi_c(\Gnp)$ when~$p=p(n)$ is between~$n^{-1/3}$ and~$n^{-o(1)}$.
\begin{problem}
Fix~$\eps>0$. Determine the whp asymptotics of~$\chi_c(\Gnp)$ when~$n^{-1/3+\eps} \ll p \ll n^{-\eps}$. 
\end{problem}
In this range of~$p=p(n)$ we know that whp $\chi_c(\Gnp) = {\Theta(\log(n)/p)}$ by \refT{thm:main1}, so the main difficulty is to determine the behavior of the leading constant. 
This is of particular interest, since 
(i)~by \refL{lem:upper-bound} the leading constant must be smaller than~$1/2$, which contrasts 
the typical asymptotics $\chi_c(\Gnp) = {\bigpar{\tfrac{1}{2} + o(1)}  \log(n)/p}$ of \refT{thm:main2} for $n^{-o(1)} \le p \ll 1$, 
and 
(ii)~it may be possible that `adaptive' behavior akin to~\eqref{eq:main3} and~\eqref{eq:maxclique:3} plays a role in some range of~$p=p(n)$. 
The whp asymptotics of~$\chi_c(\Gnp)$ for constant~${p \in (0,1)}$ is a related problem of interest. 
A close inspection of its proof reveals that \refT{thm:main2} carries over to~$p \le p_0$ for some small constant~$p_0>0$ (via simple routine modifications), 
provided one uses~$\log_{\frac{1}{1-p}}(n)$ instead of~$\log (n)/p$ in the resulting estimate~\eqref{eq:main2}; 
we now record this observation for future~reference. 
\begin{corollary}\label{thm:main4}%
There exists~$p_0>0$ such that the following holds: if~$n^{-o(1)} \le p \le p_0$, then whp 
\begin{equation}\label{eq:main4}
    \chi_c\bigpar{\Gnp} \: = \:     \bigpar{\tfrac{1}{2}+o(1)}  \log_{\frac{1}{1-p}}(n).
\end{equation}
\end{corollary}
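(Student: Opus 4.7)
The plan is to re-run the proof of Theorem~\ref{thm:main2} essentially word-for-word for $n^{-o(1)} \le p \le p_0$, while systematically keeping the expression $s = \lfloor \delta \log_{1/(1-p)}(n) \rfloor$ from~\eqref{def:s} as the number of color classes, rather than converting to the equivalent $\log(n)/p$ form (which is only valid when $p \ll 1$). The result then falls out by choosing $p_0 > 0$ small enough that both the lower-bound machinery of Theorem~\ref{thm:max-clique} and the upper-bound scheme of \refL{lem:upper-bound} continue to go through.

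For the lower bound, I would first extend Theorem~\ref{thm:max-clique} to allow constant $p \le p_0$. Since $\rho = \log_n(1/p) = o(1)$ throughout $n^{-o(1)} \le p \le p_0$, the parameter choices in \refS{sec:param:choices} (case $p \ge n^{-\sigma}$, $m \ge 2$) yield $m = \lfloor 2/(3\rho) \rfloor$, $k = \lceil 1/\rho + 1/2 \rceil \sim \log(n)/\log(1/p)$, and $\delta = 1/2 - 3\rho - 9\log\log(n)/\log(n) = 1/2 + o(1)$. The only point to recheck is the constraint $k \le \log n$ in~\eqref{ass:Janson:k:bound}, which forces $p_0 < 1/e$ so that $\log(1/p_0) > 1$; the remaining inequalities of~\refL{lem:choices} only used $p \ll 1$ through $\rho = o(1)$, so carry over unchanged. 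The argument in the proof of Theorem~\ref{thm:main2} then gives whp
\begin{equation*}
\chi_c(\Gnp) \ge s + 1 \ge \delta \log_{\tfrac{1}{1-p}}(n) = \bigpar{\tfrac{1}{2} + o(1)} \log_{\tfrac{1}{1-p}}(n),
\end{equation*}
where the final equality uses only $\delta = 1/2 + o(1)$ (and crucially no conversion between $\log_{1/(1-p)}(n)$ and $\log(n)/p$).

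For the matching upper bound, I would repeat the two-step coloring scheme from the proof of \refL{lem:upper-bound}, using the same choices $s = \lfloor \delta \log_{1/(1-p)}(n) \rfloor$, $\delta = 1/2 - \rho/2 + \rho^2/(1-\lambda) + \lambda$, and $z = \lceil 4/p \rceil$. For constant $p \le p_0$ one still has $\delta = 1/2 + o(1)$, and $z = O(1)$ is negligible compared to $s = \Theta(\log n)$, so $s + z + 1 = \bigpar{1/2 + o(1)} \log_{1/(1-p)}(n)$. The main technical step will be to re-verify $\sum_k \E X_{i,k} = o(1/z)$ from the proof, as the original argument freely used $p \ll \log^{-1}(n)$. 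The key input is the exponent identity $1/2 - \delta - \rho/2 = -\rho^2/(1-\lambda) - \lambda \le -\lambda$, giving $n^{1/2-\delta-\rho/2} \le \log^{-6}(n)$, which still provides sufficient decay across the three regimes~\eqref{eq:EXik:1}--\eqref{eq:EXik:3}. I expect the main (minor) obstacle to be routine bookkeeping in the case split on $k$ relative to $\log_{1/p}(n)$, carried out now with $\rho$ a positive constant fraction of $1/\log n$ rather than a quantity vanishing with $p$.
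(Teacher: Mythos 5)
Your proposal is correct and takes essentially the same route as the paper, which proves this corollary precisely by observing that the proof of \refT{thm:main2} (lower bound via \refT{thm:max-clique} with the $m\ge 2$ parameter choices, upper bound via the coloring scheme behind \refL{lem:upper-bound} and \refR{rem:upper-bound}) carries over to constant $p\le p_0$ once one keeps $\log_{\frac{1}{1-p}}(n)$ throughout instead of converting to $\log(n)/p$. One minor caveat: besides $k\le\log n$, a few verifications in \refL{lem:choices} use $p\to 0$ beyond $\rho=o(1)$ (e.g.\ $\phi(\alpha/p)p\ge(1-o(1))\alpha\log(1/p)$ for~\eqref{eq:density:2}, and $e^2p\ll p^{1/2}$, $ep^{\sigma}\ll p^{\sigma/2}$ in~\eqref{eq:Pi:alpha:first}), so for constant $p$ these also force $p_0$ to be a sufficiently small constant --- which your ``choose $p_0$ small enough'' framing already accommodates.
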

By the results of Demidovich and Zhukovskii~\cite{DZ2023} we know that \eqref{eq:main4} also holds whp for constant~$p \in [0.5,1)$.
We believe that it should be possible to close the gap for constant~$p \in (p_0,0.5)$ by adding more bells and whistles to our proof approach, 
which we intend to elaborate on in future work.  
%
\end{itemize}

\bigskip{\noindent\bf Acknowledgements.} 
We are grateful to the referees for a very careful reading of the paper, and for useful suggestions concerning the presentation. 
Lutz Warnke would also like to thank Lyuben Lichev and Dieter Mitsche for helpful discussions on the clique chromatic number of random graphs.

\bibliographystyle{plain}

\end{document}